 \numberwithin{equation}{section}
\theoremstyle{plain}
\numberwithin{equation}{section}
\theoremstyle{plain}
\newcommand{\new}[1]{{#1}}
\newcommand{\better}[1]{{#1}}
\newcommand{\verified}[1]{{#1}}
\newcommand{\mar}[1]{}
\theoremstyle{theorem}
	\newtheorem{algorithm}[theorem]{Algorithm}
	\newtheorem{openproblem}[theorem]{Open Problem}
\theoremstyle{definition}
\theoremstyle{remark}
\numberwithin{equation}{section}
 \newcommand{\defstyle}[1]{\textit{\textbf{#1}}}
\newcommand{\abs}[1]{\left|#1\right|}
\newcommand{\dist}[2]{\abs{#1-#2}}
\newcommand{\identity}{\mathbb{1}}
\newcommand{\interior}[1]{#1^{\circ}}
\newcommand{\ball}[2]{B_{#2}\left(#1\right)}
\newcommand{\ballint}[2]{B^{\circ}_{#2}\left(#1\right)}
 \newcommand{\myprob}[1]{\mathbb P \left[ #1 \right]}
\newcommand{\omid}[1]{\mathbb E \left[ #1 \right]}
\newcommand{\omidPalm}[2]{\mathbb E_{#1} \left[ #2 \right]}
\newcommand{\norm}[1]{\left| #1 \right|}
\newcommand{\conv}[1]{Conv\left( #1 \right)}
\newcommand{\g}[4]{{#4}_{#1}(#2,#3)}
\newcommand{\h}[4]{{#4}^{#1}(#2,#3)}
\newcommand{\vtk}{Voronoi transport kernel}
\newcommand{\vdensity}{Voronoi density}
\newcommand{\vterritory}{Voronoi territory}
\newcommand{\vterritories}{Voronoi territories}
 \newcommand{\myXi}{\mathbb R^d}
\newcommand{\myX}{\mathbb R^d}
 \newcommand{\myemph}[1]{\emph{#1}}
 \newcommand{\continuity}[1]{}
\newcommand{\mytitle}{Stable transports between stationary random measures}
\begin{document}

\section{Introduction}

For a random measure $\Psi$ on $\mathbb R^d$, there are a number of equivalent definitions for \myemph{the Palm distribution} of $\Psi$ \verified{(see Section~\ref{sec:preliminaries})}.
Heuristically speaking, in the case that $\Psi$ is {stationary and ergodic}, the \myemph{Palm version} of $\Psi$ is obtained by viewing $\Psi$ from a \myemph{typical point} of $\Psi$.
\verified{The result of~\cite{Th96}} shows that there exists a random \verified{point} $Y$ such that
\verified{by viewing $\Psi$ from $Y$; i.e.}
by translating $\Psi$ by vector $-Y$, we get
 exactly the Palm \verified{version} of $\Psi$. \verified{In other words,} there exists a coupling of $\Psi$ and its Palm version such that almost surely each one is a translated version of the other one. Such a coupling is called a \myemph{shift-coupling}. 
 

To obtain a shift-coupling, one can use a {(random)} \myemph{balancing transport kernel} $T$ that transports \verified{a multiple of} 
 the Lebesgue measure to {$\Psi$}, where by a {(random)} transport kernel we mean a {(random) function} that assigns to {each} point $s \in \mathbb R^d$ {and $\omega$ in the \verified{probability} space,} a \verified{probability} measure $T_{\omega}(s,\cdot)$ on $\mathbb R^d$. This measure can be interpreted as how the infinitesimal mass at $s$ is distributed in the space. Given that $T$ depends on $\Psi$ in a translation-invariant manner \verified{(which is called \textit{flow-adapted} here)}, then choosing $Y$ with distribution $T_{\omega}(0,\cdot)$ gives {a shift-coupling} \verified{of $\Psi$ and its Palm version}
\verified{(see~\cite{ThLa09} and~\cite{HoPe05} {as explained after Theorem~\ref{thm:extraHead}}).}
\verified{ A case of special interest is when $T$ is a {balancing}  \textit{allocation} in which the measure $T_{\omega}(s,\cdot)$ is a Dirac measure {a.e.} In this case, \verified{given $\Psi$}, the vector $Y$ defined above is deterministic and the shift-coupling does not need extra randomness.}

\verified{
Let $\Phi$ and $\Psi$ be jointly stationary and ergodic random measures on $\mathbb R^d$. Considering flow-adapted transport kernels balancing $\Phi$ and $\Psi$ has been of great interest recently. The abstract existence of such transports was proved in~\verified{\cite{ThLa09} (which is merely based on~\cite{Th96})} provided that the intensities are equal, finite and positive. Nevertheless, there has been tremendous interest in the construction of such transports in special cases in the recent years. One of the reasons for this interest is that such transport kernels lead to explicit shift-coupling of the Palm distributions of $\Phi$ and $\Psi$, as explained above.
The constructions were  motivated by Liggett~\cite{Li}, who constructed a balancing allocation for an ergodic simple point process in dimension one. The landmark in this topic is~\cite{HoPe06} {which generalizes~\cite{Li} to arbitrary dimensions}.
}
\verified{
There are several other constructions in the literature for the case of simple point processes, such as gravitational allocation~\cite{ChPe10}, optimal transport~\cite{HuSt13}, one-sided stable allocation on the line~\cite{LaMoTh14}, etc.
}

%

In this paper we give an algorithm that works \verified{in the general case 
and enables us to
\begin{itemize}
\item
construct a flow-adapted transport kernel balancing two arbitrary jointly stationary and ergodic random measures on $\mathbb R^d$ with equal intensities (Theorem~\ref{thm:site-optimalBalancing})
\item
and constructs a shift-coupling for an arbitrary stationary ergodic random measure on $\mathbb R^d$ and its Palm version (Theorem~\ref{thm:extraHead}).
\end{itemize}
}

\verified{To do this, we generalize the notion of stable allocations introduced in~\cite{HoPe06} to what we call \myemph{stable \verified{constrained} densities}, where the notion of \verified{constrained} densities is a special case of \myemph{capacity constrained transport kernels} introduced in~\cite{KoMC12}. The first, and in fact deterministic, result (Theorem~\ref{thm:stable}) is that stable \verified{constrained} densities exist and one can be given by {our algorithm (Algorithm~\ref{alg:Gale})} which is inspired by the continuum version of the Gale-Shapley algorithm in~\cite{HoPe06}. Another important result is considering the algorithm in the random case \verified{described above}. 
Other results deal with monotonicity (Theorem~\ref{thm:monotonicity}) and optimality (Corollary~\ref{cor:optimality}) properties of stable \verified{constrained} densities, uniqueness (Theorem~\ref{thm:uniqueness}) and boundedness of the support of the mass transported from and to a region (Theorem~\ref{thm:bounded}). These results are in the spirit of the seminal papers~\cite{HoPe06} {and~\cite{HoPe05}} \verified{and generalize some of their results}.
 We also introduce the notion of \myemph{\vtk} with respect to a measure, which generalizes the notion of Voronoi diagram for a discrete set. It {helps} us in proving some statements, but it can be interesting in its own.}
\verified{
The construction and results in this paper can be generalized to random measures on a locally compact Abelian group and also to non-ergodic cases (only equality of \textit{sample intensities} is important) in the setting of~\cite{ThLa09}. We don't go through these general cases to stay focused on the main ideas. 
}

\verified{
We show that using transport kernels is inevitable in the general case by providing examples where no flow-adapted balancing allocation exists. However, some results and open problems on the existence and construction of flow-adapted balancing allocations are addressed in Subsection~\ref{sec:allocations}.
}


\verified{
The paper is structured as follows. Preliminaries about random measures and transport kernels are reviewed in Section~\ref{sec:preliminaries}. Since we generalize some works of~\cite{HoPe06}, in Section~\ref{sec:motivation} we review {the idea of the algorithm in~\cite{HoPe06}} as a motivation of our work. Our main definitions and results are presented in Section~\ref{sec:main} and the proofs are postponed to Section~\ref{sec:proofs}. In Subsection~\ref{sec:mild} we define \verified{constrained} {transports}. We introduce {our algorithm} in Subsection~\ref{sec:shiftcoupling} and state the shift-coupling and balancing properties in the random case. In Subsection~\ref{sec:stability} we generalize the notion of stability which is a key tool for proving the results. Other properties of stable transports are provided in Subsection~\ref{sec:other}. Voronoi transport kernels are defined in Subsection~\ref{sec:voronoi}. In Subsection~\ref{sec:allocations} we study the existence and constructions of balancing allocations. Finally, some examples are given in Section~\ref{sec:examples} which are addressed in the text.
}
\section{Preliminaries}
\label{sec:preliminaries}
Let $\mathcal G$ be the Borel $\sigma$-field on $\mathbb R^d$ and $\mathcal L_d$ be the Lebesgue measure on $(\mathbb R^d,\mathcal G)$. We denote by $M$ the set of all non-negative locally finite measures on $(\mathbb R^d,\mathcal G)$, and by $\mathcal M$ the smallest $\sigma$-field on $M$ such that the mappings $\mu\mapsto\mu(B)$ are measurable for all $B\in
\mathcal G$. All measures in this paper are assumed to be members of $M$.

 {
Recall that the complement, the interior and the boundary of a set $A$ are denoted by $A^c$, $A^{\circ}$ and $\partial A$ respectively.
We denote The identity function on $A$ by $\identity_A$.
Also, the \myemph{closed} ball with center $x\in\mathbb R^d$ and radius $r$ is denoted by $\ball xr$.
}

 For a measure $\varphi$ and a non-negative measurable function $f:\mathbb R^d\rightarrow \mathbb R$, we denote by $f\varphi$ the measure $B\mapsto \int_B{f(s)\varphi(ds)}$.
In this article, by $\varphi_1\geq \varphi_2$ we mean $\varphi_1(B)\geq\varphi_2(B)$ for all $B\in\mathcal G$.
A \defstyle{\verified{weighted} transport kernel} is a measurable map $T:\mathbb R^d\rightarrow M$. For simplicity of notation, we denote $\left( T(s)\right)(B)$ by $T(s,B)$. Intuitively, we can think of $T(s,B)$ as
\verified{ the portion of the infinitesimal mass at $s$ transported to}
the set $B$. \verified{Given measures $\varphi$ and $\psi$, $T$ is called \defstyle{non-weighted} or \defstyle{Markovian} if 
\[
T(s,\mathbb R^d)=1 \quad \text{for } \varphi \text{-almost all } s\in\mathbb R^d
\]
and it is called 
}
\defstyle{$(\varphi,\psi)$-balancing} if it transports $\varphi$ to $\psi$; i.e.
\begin{equation*}
\int_{\mathbb R^d}T(s,\cdot)\varphi(ds) = \psi(\cdot).
\end{equation*}
\verified{In this paper, by a \defstyle{transport kernel} we mean a non-weighted transport kernel. Being non-weighted is equivalent to the condition that}
the total mass transported from a set $B$, which is $\int_B T(s,\mathbb R^d)\varphi(ds)$, is equal to $\varphi(B)$. 
\verified{Being $(\varphi,\psi)$-balancing} means that the mass transported into $B$ is equal to $\psi(B)$. \verified{If $T$ is $(\varphi,\psi)-balancing$ \verified{transport kernel}, then $T$ is a Markovian kernel which transports $\varphi$ to $\psi$.}

\verified{A \defstyle{$(\varphi,\psi)$-balancing allocation} is a transport kernel $T$ such that for $\varphi$-a.e. $s$, $T(s,\cdot)$ is a Dirac measure $\delta_{\tau(s)}$ and $\varphi(\tau^{-1}(\cdot))=\psi(\cdot)$. See Definition~\ref{def:allocation} for a precise definition of an \textit{allocation}.}
%
%
For more details on transport kernels, see~\cite{ThLa09}.

In this work, we fix a measurable space $(\Omega,\mathcal F)$ equipped with a \defstyle{measurable flow} $\theta_s:\Omega\rightarrow\Omega$ for $s\in\mathbb R^d$; i.e. $(\omega,s)\mapsto \theta_s \omega$ is measurable, $\theta_0$ is the identity on $\Omega$ and
\begin{equation*}
\theta_s \circ \theta_t = \theta_{s+t},\quad \forall s,t\in\mathbb R^d.
\end{equation*}
With an abuse of notation, we use $\theta_s$ also for natural flows on the space of functions on $\mathbb R^d$ or $\mathbb R^d\times\mathbb R^d$, on $M$, and on the space of weighted transport kernels; i.e.
\begin{eqnarray*}
\theta_s f(x)&:=&f(x+s),\\
\theta_s f(x,y)&:=&f(x+s,y+s),\\
\theta_s \varphi(B)&:=&\varphi(B+s),\\
\theta_s T(x,B)&:=&T(x+s,B+s).
\end{eqnarray*}
Using these conventions, a measurable function $F$ from $\Omega$ to a flow-equipped space is called \defstyle{flow-adapted} if $F(\theta_s\omega)=\theta_s F(\omega)$. We denote $F(\omega)$ by $F_{\omega}$ too.

 A \defstyle{random measure} is a pair $(\mathbb P,\Phi)$, where $\mathbb P$ is a probability measure on $\Omega$ and $\Phi:\Omega\rightarrow M$ is a measurable function. 
The \defstyle{distribution} of $(\mathbb P,\Phi)$ is the push-forward measure $\mathbb P(\Phi^{-1}(\cdot))$ on $M$.

 A probability measure $\mathbb P$ on $\Omega$ is \defstyle{stationary} if it is invariant under $\theta_s$ for all $s$ in $\mathbb R^d$.
A \defstyle{stationary random measure} is a random measure $(\mathbb P,\Phi)$ such that $\mathbb P$ is a stationary probability measure and $\Phi$ is flow-adapted. This implies stationarity in the usual sense, which is translation-invariance of its distribution; i.e. $\myprob{\Phi\in A}=\myprob{\theta_s \Phi \in A}$ for any $A\in\mathcal M$ and $s\in\mathbb R^d$. \verified{Moreover, if $(\mathbb P, \Psi)$ is another stationary random measure (with the same $\mathbb P$), then $\Phi$ and $\Psi$ are \defstyle{jointly stationary} in the usual sense; i.e. the distribution of the pair $(\Phi,\Psi)$ is invariant by translations.} 
When there is no ambiguity about $\mathbb P$, we may say that $\Phi$ is a stationary random measure in the sense given above. \verified{Therefore, when we mention two stationary random measures without definig the probability measure(s), they are assumed to be jointly stationary.}

 A stationary probability measure $\mathbb P$ on $\Omega$ is \defstyle{ergodic}
if for any event $A\in\mathcal F$ that is invariant under all $\theta_s$, we have $\mathbb P(A)\in\{0,1\}$. The stationary random measure $(\mathbb P,\Phi)$ is ergodic if $\mathbb P$ is ergodic. It is clear that if $\Psi$ is a measure derived from $\Phi$ in a translation-invariant manner, then stationarity (resp. ergodicity) of $(\mathbb P,\Phi)$ implies stationarity (resp. ergodicity) of $(\mathbb P,\Psi)$. \verified{For simplicity, we use the term \textit{ergodic random measure} instead of \textit{ergodic stationary random measure}.}

Since we deal with random measures in this work, we have two different notions of `almost everywhere'; one corresponding to probability and the other corresponding to the measures on $\mathbb R^d$. To avoid confusion, we preserve the phrase \myemph{almost sure(ly)} (or \myemph{a.s.}) for probability measures on $\Omega$ and we use \myemph{almost everywhere} and \myemph{almost all} (denoted by \myemph{a.e.} and \myemph{a.a.}) for measures on $\mathbb R^d$.

\begin{lemma}
\label{lemma:infinite}
Let $V\subseteq\mathbb R^d$ be a subset that contains arbitrarily large balls. If $\Phi$ is a stationary random measure such that a.s. $\Phi(\mathbb R^d)>0$, then a.s. $\Phi(V)=\infty$.
\end{lemma}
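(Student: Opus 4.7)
I would split the proof into a reduction and a key step. For the reduction, the hypothesis on $V$ furnishes balls $B_{\rho_n}(z_n) \subseteq V$ with $\rho_n \to \infty$. Monotonicity of $\Phi$ and stationarity of $\mathbb P$ then give
\[
\mathbb P(\Phi(V) > K) \geq \mathbb P(\Phi(B_{\rho_n}(z_n)) > K) = \mathbb P(\Phi(B_{\rho_n}(0)) > K) \xrightarrow{n\to\infty} \mathbb P(\Phi(\mathbb R^d) > K),
\]
where the limit uses $\Phi(B_{\rho_n}(0)) \uparrow \Phi(\mathbb R^d)$. If I can show the right-hand side equals $1$ for every $K \in \mathbb N$, intersecting over $K$ yields $\Phi(V) = \infty$ a.s.

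The core of the proof is therefore to establish that $\Phi(\mathbb R^d) = \infty$ a.s. I plan to argue by contradiction: suppose the $\theta$-invariant event $E := \{\Phi(\mathbb R^d) < \infty\}$ has positive probability. Fix $r, \epsilon > 0$ and pick an infinite sequence of disjoint balls $B_r(x_1), B_r(x_2), \ldots$ in $\mathbb R^d$ (e.g.\ on a widely spaced lattice). Disjointness gives the pathwise bound
\[
\epsilon \sum_i \indic{\Phi(B_r(x_i)) \geq \epsilon} \;\leq\; \sum_i \Phi(B_r(x_i)) \;\leq\; \Phi(\mathbb R^d),
\]
so on $E$ only finitely many $i$ can satisfy $\Phi(B_r(x_i)) \geq \epsilon$. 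On the other hand, invariance of $E$ and stationarity of $\mathbb P$ make $\mathbb P(E \cap \{\Phi(B_r(x_i)) \geq \epsilon\})$ independent of $i$, and reverse Fatou forces
\[
\mathbb P(E \cap \{\Phi(B_r(0)) \geq \epsilon\}) \;\leq\; \mathbb P\bigl(E \cap \{\Phi(B_r(x_i)) \geq \epsilon\} \text{ i.o.}\bigr) = 0.
\]
A countable union over rational $r, \epsilon > 0$ then yields $\mathbb P(E \cap \{\Phi \neq 0\}) = 0$, which contradicts $\mathbb P(\Phi(\mathbb R^d) > 0) = 1$ and completes the proof.

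\textbf{Main obstacle.} The delicate point is that ergodicity is not assumed, so the usual 0-1 law on the invariant event $E$ is unavailable. The remedy is the sandwich above: a deterministic summability bound on $E$ coming from disjointness, matched against a stationarity-plus-reverse-Fatou lower bound restricted to the same invariant event. Once one sees that invariance of $E$ lets these two sides speak to one another, the rest is routine bookkeeping.
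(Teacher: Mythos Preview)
Your proof is correct. Both the reduction and the key step are sound; the reverse-Fatou argument combined with invariance of $E$ is a clean way to handle the non-ergodic case.

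The paper takes a somewhat different and more compressed route. Rather than first reducing to the statement $\Phi(\mathbb R^d)=\infty$ a.s.\ and then proving that separately, it works directly with $V$: assume $\mathbb P[\Phi(V)<\infty]=2\delta>0$, fix $\epsilon>0$, and choose $R$ large enough that $\mathbb P[\Phi(V\setminus B_R(0))<\epsilon]>\delta$. Since $V\setminus B_R(0)$ still contains balls of every radius $r$, stationarity gives $\mathbb P[\Phi(B_r(0))<\epsilon]>\delta$ for all $r$, hence $\mathbb P[\Phi(\mathbb R^d)\le\epsilon]\ge\delta$; letting $\epsilon\downarrow 0$ contradicts $\Phi(\mathbb R^d)>0$ a.s. So the paper exploits the ``tail of $V$ still contains large balls'' observation to collapse your two steps into one monotone-convergence argument, avoiding the disjoint-ball and reverse-Fatou machinery. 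Your decomposition, on the other hand, isolates the standalone fact that a nonzero stationary random measure has infinite total mass, which is arguably of independent interest and makes the role of stationarity more transparent.
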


 The \defstyle{intensity} of a stationary random measure $(\mathbb P,\Phi)$ is the unique constant $\lambda\in[0,\infty]$ such that $\omid{\Phi(B)} = \lambda \mathcal L_d(B)$ for an arbitrary Borel set $B\in\mathcal G$. If $\lambda$ is positive and finite, then the \defstyle{Palm version} of $(\mathbb P,\Phi)$ is the random measure $(\mathbb P_{\Phi},\Phi)$ in which
\begin{equation}
\verified{\label{eq:Palm}}
\mathbb P_{\Phi}(A):=\frac 1{\lambda \mathcal L_d(B)}\omid{\int_{\mathbb R^d} \identity_A(\theta_s)\identity_B(s)\Phi(ds)},
\end{equation}
for all $A\in\mathcal F$, where $\theta_s$ is interpreted as the random element $\theta_s(\omega)$ and $B$ is an arbitrary Borel set $B\in\mathcal G$ with positive and finite Lebesgue measure. \verified{Note that $\Phi$ stays the same and only the probability measure is changed.} It can be shown that stationarity implies that this definition is independent of the choice of $B$ and formalizes the intuition of the Palm distribution \verified{given in the introduction}. The above equation is equivalent to the fact that
\begin{equation}
\label{eq:omidPalm}
\omidPalm{\Phi}{H} = \frac 1{\lambda \mathcal L_d(B)}\omid{\int_{\mathbb R^d} H(\theta_s)\identity_B(s)\Phi(ds)}
\end{equation}
for all non-negative measurable functions $H:\Omega\rightarrow\mathbb R$, where $\mathbb E_{\Phi}$ is expectation with respect to $\mathbb P_{\Phi}$.
{By this equation, }we can think of $\omidPalm{\Phi}{H}$ as averaging $H(\theta_s\omega)$ over the points $s\in B$ and $\omega\in\Omega$.

 The \myemph{refined Campbell theorem } states that for all non-negative measurable functions $H:\Omega\times\mathbb R^d\rightarrow\mathbb R$,

 \begin{equation}
\label{eq:Campbell}
\lambda \omidPalm{\Phi}{\int H(\theta_0,s) ds} = \omid{\int H(\theta_s,s)\Phi(ds)},
\end{equation}
where $ds$ is a short form of $\mathcal L_d(ds)$. If $(\mathbb P,\Psi)$ is also a stationary random measure on the same space, we have Neveu's exchange formula
\begin{equation}
\label{eq:Neveu}
\lambda_{\Phi} \omidPalm{\Phi}{\int H(\theta_0,s)\Psi(ds)}=\lambda_{\Psi} \omidPalm{\Psi}{\int H(\theta_s , -s)\Phi(ds)},
\end{equation}
where $\lambda_{\Phi}$ and $\lambda_{\Psi}$ are the intensities of $\Phi$ and $\Psi$ respectively. Interested readers may refer to~\cite{b: ScWe}, sections~3.3 and~3.4, for more details on the properties of Palm distributions.

 A \defstyle{shift-coupling} of random measures $(\mathbb P_1,\Phi_1)$ and $(\mathbb P_2,\Phi_2)$ is a random vector $Y$ \verified{(possibly on an extension of $\Omega$)}  such that $(\mathbb P_1,\theta_Y(\Phi_1))$ has the same distribution as $(\mathbb P_2,\Phi_2)$. Note that this \verified{provides a coupling of} the distributions of the two random measures such that they are a translated version of each other in every realization. 
 \verified{
 	The necessary and sufficient condition for the existence of a shift-coupling is proved in~\cite{Th96} in a more general setting. However, the proof in~\cite{Th96} is non-constructive (See Theorem~\ref{thm:extraHead}).
 	}
 {Shift-couplings are of special interest in the case that} $(\mathbb P_1,\Phi_1)$ is a stationary random measure and $(\mathbb P_2,\Phi_2)$ is its Palm version. {They can be used to construct the Palm version of a random measure or to reconstruct the random measure from its Palm version \verified{by a random translation} (see Theorem~\ref{thm:extraHead}).} 
{In particular, when $\Phi_1$ is a simple point process,} $Y$ is a called an \defstyle{extra head scheme} for $\Phi_1$ in~\cite{HoPe05}. \verified{The name comes from the fact that if $\Phi_1$ is the point process on $\mathbb Z^d$ formed by the heads in i.i.d. coin tosses (resp. the Poisson point process), then by shifting $Y$ to the origin we get the same thing (in distribution) as placing a head (resp. point) on the origin.} 

\section{\verified{Motivation: Extra head scheme \verified{for a point process}}}
\label{sec:motivation}
\verified{
 
To get an idea of the definitions in this paper, it is helpful to remind the novel algorithm of~\cite{HoPe06} (Algorithm~\ref{alg:GaleSimple}) which we will generalize. This algorithm is inspired by the stable marriage algorithm of Gale and Shapley in bipartite graphs (\cite{GaSh}) and appears to be the first generalization to a continuum setting. The main goal in the paper is to construct a translation-invariant balancing allocation between the Lebesgue measure and a realization of an ergodic stationary simple point process in $\mathbb R^d$.

 Let $\Xi$ be a discrete subset of $\mathbb R^d$.
By \textit{sites} and \textit{centers} we mean the points of $\mathbb R^d$ and $\Xi$ respectively. The idea of the following algorithm is that each site and each center prefer to be allocated as close as possible. Forget the sites that are equidistant from two or more centers.

 \begin{algorithm} 
 \label{alg:GaleSimple}
For each natural number $n$, stage $n$ consists of the following two parts:
\begin{description}
\item[(a)] Each site $x$ \textit{applies} to the closest center to $x$ which has not rejected $x$ at any earlier stage.
\item[(b)] For each center $\xi$, Let $A$ (which depends on $n$ and $\xi$) be the set of sites which applied to $\xi$ in the previous step. Let $B$ be the smallest ball centered at $\xi$ such that $\mathcal L(A\cap B)\geq 1$. Then $\xi$ rejects the sites in $A\backslash B$. 
\end{description}
Easily seen, each site $x$ either is rejected by all centers or for some center $\xi$, $x$ applies to $\xi$ and is never rejected for sufficiently large $n$. In the first case let {$\tau(x)=\infty$} (where $\infty$ is treated as a single point added to the space whose distance to every other point is $\infty$) and in the second case let $\tau(x):=\xi$.
\end{algorithm}

It is proved in~\cite{HoPe06} that the function $\tau$ defined in this algorithm is \textit{stable} \verified{in a sense that is a generalization of {stable matchings,
which is, roughly speaking, }
there are no sites $x_1$ and $x_2$ such that $\dist{x_1}{\tau(x_2)}<\min\{\dist{x_1}{\tau(x_1)},\dist{x_2}{\tau(x_2)}\}$ (see Definition~\ref{def:stableAllocation} for the exact definition).}
 An interesting theorem in~\cite{HoPe06} is that
 if  $\Psi$ is an ergodic simple point process in $\mathbb R^d$ with intensity 1, then almost surely the (random) function $\tau$ defined in Algorithm~\ref{alg:GaleSimple} for the discrete set $\Psi_{\omega}$ defines a balancing allocation between the Lebesgue measure and $\Psi$; i.e.  almost surely $\tau$ is defined for almost every site and $\mathcal L(\tau^{-1}(\xi))=1$ for all centers $\xi$.


One can try to apply Algorithm~\ref{alg:GaleSimple} to find an allocation for two arbitrary measures $\varphi$ and $\psi$ (instead of the Lebesgue measure and a counting measure). For example, let them be the Lebesgue measure restricted on $(-\infty,0]$ and $[0,\infty)$ respectively. The sites and the centers are the points in the supports of $\varphi$ and $\psi$ respectively. In the first step, all sites apply to center $0$, but the next steps are vague. Same issue may happen in the stationary case. 
If we consider the allocation as a transport kernel $ T(x,\cdot):=\delta_{\tau(x)}$, 
our idea is to force the infinitesimal mass at every point to be spreaded instead of being transported to a single point. To do so,
We will force an upper bound on the  measure $T(x,\cdot)$.
Since $x$ wants to apply to the support of $\psi$, a natural upper bound is $T(x,B)\leq\psi(B)$. 
This justifies the choice of the name `\verified{constrained}' (see definitions~\ref{def:mildTransport} and~\ref{def:compatibleDensity}). 
The above example will be addressed in Example~\ref{ex:-+}.

}

\section{\verified{Definitions and Main Results}}
\label{sec:main}
\subsection{\verified{Constrained Transport Kernels and Constrained densities}}
\label{sec:mild}
Let $\varphi$ and $\psi$ be given locally finite non-negative Borel measures on $\mathbb R^d$. 
%
%
\verified{
\begin{definition}
\label{def:mildTransport}
\verified{A weighted transport kernel $T$ is called {\defstyle{$\psi$-constrained}} if}
 $T(x,\cdot)\leq\psi$ for all $x\in\mathbb R^d$.
\end{definition}

\verified{As we will see in Subsection~\ref{sec:allocations}, a $(\varphi,\psi)$-balancing {$\psi$-constrained} transport kernel cannot be an allocation except maybe when $\psi$ is discrete.}


Since $T(x,\cdot)$ is absolutely continuous w.r.t. $\psi$ for any $x$, we will work with its Radon-Nykodim derivative. 
\verified{The following definition and Remark~\ref{rem:kernel} establish the setup in a measurable way.}
}

 \begin{definition}
\label{def:compatibleDensity}
A non-negative measurable function $f(x,\xi)$ on ${\myX}\times\myXi$ is called a \defstyle{\verified{sub-balancing} density} (given $\varphi$ and $\psi$) if
\begin{eqnarray*}
\int_{\myXi}{f(x,\xi)\psi(d\xi)} \leq 1, && \forall x\in {\myX},\\
\int_{{\myX}}{f(x,\xi)\varphi(dx)} \leq 1, && \forall \xi\in\myXi.
\end{eqnarray*}
We call $f$ \defstyle{balancing}  if equality happens for $\varphi$-a.e. $x$ and $\psi$-a.e. $\xi$. Furthermore, $f$ is called \defstyle{\verified{constrained}} if it is \verified{sub-balancing} and $f(x,\xi)\leq 1$ for every $(x,\xi)\in {\myX}\times\myXi$.
\end{definition}

 \begin{remark}
\label{rem:kernel}
A {\verified{constrained}} density $f$ defines a \verified{$\psi$-constrained} weighted transport kernel via
\begin{equation}
\label{eq:kernel}
T(x,B):=\int_{B}f(x,\xi)\psi(d\xi).
\end{equation}
\verified{$f(x,\xi)$ can be thought as the infinitesimal mass going from $x$ to $\xi$.}
If $f$ is balancing, then $T$ is a $(\varphi,\psi)$-balancing transport kernel.  
\verified{Conversely, If $T$ is a $(\varphi,\psi)$-balancing {$\psi$-constrained} transport kernel, then \verified{(a suitable version of)} the Radon-Nykodim derivative of the measure on $\mathbb R^d\times\mathbb R^d$ defined as $(A\times B)\mapsto \int_A T(x,B)\varphi(dx)$, with respect to $\varphi\otimes\psi$, is a balancing \verified{constrained} density. Hence, we might think of $f$ as the \textit{density} of $T$ w.r.t. $\psi$. Similar correspondence holds in the random case.} 
\end{remark}

\verified{Note that the definition of constrained densities depends on both $\varphi$ and $\psi$, but we don't use a prefix for simplicity.}
 The notion of balancing \verified{constrained} densities is a special case of \myemph{capacity constrained} transport kernels defined in~\cite{KoMC12}. 
  It can be interpreted as there is a transportation capacity between any site and any center. 
\verified{ More explicitly, the total mass going from a set $A$ of sites to a set $B$ of centers is at most $\varphi(A)\psi(B)$.}
 
\subsection{\verified{Construction of a Shift-Coupling}}
\label{sec:shiftcoupling}


Let $\varphi,\psi\in M$ be given. 
\verified{
Since we have different procedures regarding $\varphi$ and $\psi$, it is helpful to use the names \defstyle{sites} and \defstyle{centers} for points in the supports of $\varphi$ and $\psi$ respectively, following the terminology of~\cite{HoPe06}.
 Imagine we have two copies of $\mathbb R^d$, in one copy, we have sites and measure $\varphi$ and in the other copy we have centers and measure $\psi$. 
Nevertheless, we will measure the distance between a site and a center as if they are on the same space.
We use roman letters for naming sites and Greek letters for naming centers. 
}

{Here is an overview of Algorithm~\ref{alg:Gale}.} The algorithm consists of infinitely many stages and each stage has two steps. At stage $n$, each site $x_0$ \myemph{applies} to the closest possible centers with weight $A_n(x_0,\cdot):\myXi\rightarrow [0,1]$ (tries to construct $f(x_0,\cdot)$). Then each center $\xi_0$ \myemph{rejects} some of the weights applied to $\xi_0$ if it has reached its capacity. The amount of rejection is denoted by $R_{n}(\cdot,\xi_0)$.
Note that even if $R_{n}(x_0,\xi_0)>0$, $x_0$ will still apply to $\xi_0$ at all later stages.
 The functions $A_n$ and $R_n$ will be non-decreasing with respect to $n$.
\verified{A heuristic for choosing $A_n$ and $R_n$ in the algorithm is that the sites prefer to apply to centers which are \textit{as close as possible}, in a greedy manner. 
Similarly, the centers prefer to 
reject no portion of the applications of the sites which are \textit{as close as possible}.}

For an illustration of Algorithm~\ref{alg:Gale} see \verified{Example~\ref{ex:interval}. Example~\ref{ex:generalization} shows that this algorithm generalizes the algorithm in~\cite{HoPe06}.}
%
 The name \myemph{site-optimal} is justified in Corollary~\ref{cor:optimality}. 

 \begin{algorithm}[site-optimal Gale-Shapley algorithm]
\label{alg:Gale}
{Given measures $\varphi$ and $\psi$ on $\mathbb R^d$}, let the rejection function be zero at the {beginning}; i.e. $R_0(x,\xi):=0$ for all $(x,\xi) \in \mathbb R^d\times\mathbb R^d$. For each natural number $n$, \myemph{stage} $n$ consists of the following steps:
\begin{enumerate}[(i)]
\item For each site $x_0$, define its \myemph{application radius} at stage $n$ as
\begin{equation}
\label{eq:a_n}
a_n(x_0) := \sup\left\{a: \int_{\ball {x_0}a} \left(1-R_{n-1}(x_0,\xi)\right)\psi(d\xi)\leq 1 \right\}.
\end{equation}
\verified{Define the \myemph{$n$-th application function} as}
\begin{equation*}
A_n(x_0,\xi) := \left\{
\begin{array}{ll}
1 & \abs{x_0-\xi}<a_n(x_0),\\
c R_{n-1}(x_0,\xi) + (1-c) & \abs{x_0-\xi}=a_n(x_0),\\
0 & \abs{x_0-\xi}>a_n(x_0),
\end{array} \right.
\end{equation*}
where $c=c_n(x_0)$ is the constant in $[0,1]$ such that we have
\begin{equation}
\label{eq:c_n}
\int_{\myXi} \left(A_n(x_0,\xi)-R_{n-1}(x_0,\xi)\right)\psi(d\xi)= 1 \quad \verified{\text{if } a_n(x_0)<\infty}
\end{equation}
and we let \verified{$c=1$} \verified{(or any arbitrary constant)} if \verified{$a_n(x_0)=\infty$ or} $\psi(\partial \ball{x_0}{a_n(x_0)})=0$.
We say $x_0$ \myemph{applies} to the centers with weight $A_n(x_0,\cdot)$ {at stage $n$}. 
\item
For each center $\xi_0$, define its \myemph{rejection radius} at stage $n$ as
\begin{equation}
\label{eq:r_n}
r_n(\xi_0) := \sup \left\{ r: \int_{\ball{\xi_0}r} A_n(x,\xi_0)\varphi(dx) \leq 1 \right\}.
\end{equation}
Define the \textit{$n$-th rejection function} as
\begin{equation*}
R_{n}(x,\xi_0) := \left\{
\begin{array}{ll}
0 & \abs{x-\xi_0}<r_n(\xi_0),\\
c' A_n(x,\xi_0) & \abs{x-\xi_0}=r_n(\xi_0),\\
A_n(x,\xi_0) & \abs{x-\xi_0}>r_n(\xi_0),
\end{array} \right.
\end{equation*}
{where $c'=c'_n(\xi_0)$ is the constant in $[0,1]$} such that we have
\[
\int_{{\myX}}\left( A_n(x,\xi_0)-R_{n}(x,\xi_0) \right)\varphi(dx)=1 \quad \verified{\text{if } r_n(\xi_0)<\infty}
\]
and we let $c'=0$ if \verified{$r_n(\xi_0)=\infty$ or} $\varphi(\partial \ball{\xi_0}{r_n(\xi_0)})=0$.
We say $\xi_0$ \myemph{rejects} the application weights according to $R_{n}(\cdot,\xi_0)$ { at stage $n$}.
\end{enumerate}
\end{algorithm}

\verified{
It is shown in the proof of Lemma~\ref{lemma:increasing} that $c_n(x_0)$ and $c'_n(\xi_0)$, which are defined in the algorithm, exist and are well defined.
}

 \begin{lemma}
\label{lemma:increasing}
In Algorithm~\ref{alg:Gale}, $A_n,R_n$ and $a_n$ are non-decreasing with respect to $n$, $r_n$ is non-increasing. 
\verified{Moreover, they depend on $\varphi$ and $\omega$ in a measurable and flow-adapted manner.} 
\end{lemma}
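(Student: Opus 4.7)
The plan is to argue by strong induction on $n$, establishing the four monotonicity statements simultaneously with the well-definedness of $c_n(x_0)$ and $c'_n(\xi_0)$, measurability, and flow-adaptedness. The base case follows from $R_0\equiv 0$, which is trivially non-negative. For the inductive step I will assume $R_{n-1}\geq R_{n-2}$ and then carry out the four substeps in the order they occur in the algorithm: first $a_n\geq a_{n-1}$, then $A_n\geq A_{n-1}$, then $r_n\leq r_{n-1}$, then $R_n\geq R_{n-1}$. En route one must check that the normalizing constants are well defined; the map
\[
c\mapsto \int_{\ballint{x_0}{a_n(x_0)}}(1-R_{n-1}(x_0,\xi))\psi(d\xi) + (1-c)\int_{\partial\ball{x_0}{a_n(x_0)}}(1-R_{n-1}(x_0,\xi))\psi(d\xi)
\]
is affine in $c\in[0,1]$ and, by the very definition of $a_n(x_0)$ as a supremum (together with monotone/dominated convergence as $a\uparrow a_n$ and $a\downarrow a_n$), takes values $\leq 1$ at $c=1$ and $\geq 1$ at $c=0$, so the intermediate value theorem produces the required $c_n(x_0)\in[0,1]$; the same argument yields $c'_n(\xi_0)$.

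The inequalities $a_n\geq a_{n-1}$ and (dually) $r_n\leq r_{n-1}$ are immediate from the inductive hypothesis: once $R_{n-1}\geq R_{n-2}$, every admissible $a$ in~\eqref{eq:a_n} at stage $n-1$ remains admissible at stage $n$, and once $A_n\geq A_{n-1}$ the set in~\eqref{eq:r_n} shrinks. The pointwise bound $A_n\geq A_{n-1}$ will follow from a case split on where $|x_0-\xi|$ sits relative to $a_{n-1}(x_0)$ and $a_n(x_0)$, and all cases are routine except the subtle one where $a_n(x_0)=a_{n-1}(x_0)$ and $|x_0-\xi|$ equals their common value. There I will write $A_k=1-c_k(1-R_{k-1})$ on the boundary sphere and use the closed form
\[
1-c_n(x_0)=\frac{1-\int_{\ballint{x_0}{a_n(x_0)}}(1-R_{n-1})\psi(d\xi)}{\int_{\partial\ball{x_0}{a_n(x_0)}}(1-R_{n-1})\psi(d\xi)}
\]
to deduce that, when $R_{n-1}$ increases, the numerator of $1-c_n$ weakly grows while the denominator weakly shrinks, so $c_n(x_0)\leq c_{n-1}(x_0)$; combined with $1-R_{n-1}\leq 1-R_{n-2}$ this gives $c_n(1-R_{n-1})\leq c_{n-1}(1-R_{n-2})$, which is exactly $A_n\geq A_{n-1}$ on the boundary. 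The step $R_n\geq R_{n-1}$ will be completely symmetric: a case split on $|x-\xi_0|$ relative to $r_n(\xi_0)$ and $r_{n-1}(\xi_0)$ reduces, via the same trick, to $c'_n(\xi_0)\geq c'_{n-1}(\xi_0)$, which follows from $A_n\geq A_{n-1}$.

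Measurability and flow-adaptedness will be piggybacked on the same induction. If $R_{n-1}$ is jointly measurable, then Fubini makes the map $(x_0,t,\omega)\mapsto\int_{\ball{x_0}{t}}(1-R_{n-1})d\psi$ measurable, so $a_n$ is measurable as the supremum of $t$ satisfying a measurable inequality; the closed-form expression for $1-c_n$ then delivers measurability of $c_n$ (using the conventions in the algorithm when the denominator vanishes), and hence of $A_n$. The rejection side is analogous. Flow-adaptedness propagates by a change of variables $\xi\mapsto\xi+s$ in~\eqref{eq:a_n} and~\eqref{eq:c_n}, using $\theta_s\psi(\cdot)=\psi(\cdot+s)$ and the inductive identity $R_{n-1}(\theta_s\omega)(x,\xi)=R_{n-1}(\omega)(x+s,\xi+s)$. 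The main obstacle in the whole argument is the single boundary subcase in the monotonicity of $A_n$ (and its mirror for $R_n$), where pointwise comparison is not forced by the integral identity $\int(A_n-R_{n-1})d\psi=1$ alone and one really must exploit the explicit normalization of $c_n$; the remaining edge cases ($a_n=\infty$, $\psi(\partial\ball{x_0}{a_n})=0$, etc.) are absorbed into the algorithm's stated conventions.
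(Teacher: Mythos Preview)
Your proposal is correct and follows essentially the same approach as the paper: an induction on $n$ with hypothesis $R_{n-1}\geq R_{n-2}$, the chain $a_n\geq a_{n-1}\Rightarrow A_n\geq A_{n-1}\Rightarrow r_n\leq r_{n-1}\Rightarrow R_n\geq R_{n-1}$, and the same closed form for $c_n(x_0)$ to handle the boundary subcase $a_n(x_0)=a_{n-1}(x_0)$. You are in fact slightly more explicit than the paper in two places: you spell out the intermediate-value argument for the existence of $c_n(x_0)$ and $c'_n(\xi_0)$, and you make explicit that on the boundary one needs both $c_n\leq c_{n-1}$ \emph{and} $1-R_{n-1}\leq 1-R_{n-2}$ to conclude $A_n\geq A_{n-1}$, whereas the paper's proof compresses this into ``hence $A_n(x_0,\cdot)\geq A_{n-1}(x_0,\cdot)$''; similarly, the paper asserts measurability and flow-adaptedness without the Fubini/change-of-variables details you provide.
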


 This lemma allows us to provide the following definitions.

 \begin{definition}
\label{def:site-optimalDensity}
In Algorithm~\ref{alg:Gale},
the \myemph{(final) application radius}, \myemph{rejection radius}, \myemph{application function}, \myemph{rejection function} and \defstyle{the site-optimal density} are defined as follows

 \begin{eqnarray*}
a(x)&:=& \lim_{n\rightarrow\infty}a_n(x),\\
r(\xi)&:=& \lim_{n\rightarrow\infty}r_n(\xi),\\
A(x,\xi)&:=& \lim_{n\rightarrow\infty}A_n(x,\xi),\\
R(x,\xi)&:=& \lim_{n\rightarrow\infty}R_n(x, \xi),\\
f_s(x,\xi)&:=& A(x,\xi)-R(x,\xi).
\end{eqnarray*}

 \end{definition}

\verified{
\begin{definition}
The \defstyle{center-optimal} Gale-Shapley algorithm for $\varphi$ and $\psi$ is just Algorithm~\ref{alg:Gale} for $\psi$ and $\varphi$; i.e. we swap the roles of sites and centers. \defstyle{The center-optimal density} $f_c$ is defined similar to Definition~\ref{def:site-optimalDensity} using the center-optimal Gale-Shapley algorithm.
\end{definition}
}
\begin{theorem}
\label{thm:site-optimalBalancing}
\verified{ Let $\Phi$ and $\Psi$ be ergodic 
random measures on $\mathbb R^d$ with positive and finite intensities. If the intensities are equal, then}
the site-optimal density which is constructed by Algorithm~\ref{alg:Gale} is \verified{almost surely a balancing \verified{constrained} density}. 
Therefore, it almost surely gives a flow-adapted  $(\Phi_{\omega}, \Psi_{\omega})$-balancing transport kernel via~\eqref{eq:kernel}.
\end{theorem}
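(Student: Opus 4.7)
The plan is to prove that the pointwise limit $f_s = A - R$ from Definition~\ref{def:site-optimalDensity} is almost surely a balancing constrained density; the theorem statement then follows by Remark~\ref{rem:kernel} and the flow-adaptedness from Lemma~\ref{lemma:increasing}. I split the argument into (a) elementary bounds showing that $f_s$ is a (sub-balancing) constrained density, (b) a deterministic characterization of when the sub-balancing is tight, and (c) an ergodic contradiction combined with Neveu's formula.

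For (a), the construction gives $0 \leq R_n \leq A_n \leq 1$, so $f_s \in [0,1]$ pointwise. The algorithm enforces $\int (A_n(x_0,\xi) - R_{n-1}(x_0,\xi))\psi(d\xi) = 1$ whenever $a_n(x_0) < \infty$, and by the definition of $a_n(x_0)$ the same integral is $\leq 1$ when $a_n(x_0)=\infty$; since $R_n \geq R_{n-1}$ this gives $\int (A_n - R_n)(x_0,\cdot)\psi \leq 1$ for every $n$, so Fatou's lemma applied along the pointwise convergence $A_n - R_n \to f_s$ yields $\int f_s(x_0,\xi)\psi(d\xi) \leq 1$ for every $x_0$. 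The symmetric estimate from the center side is identical.

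For (b), I claim that $a(x_0)<\infty$ implies $\alpha(x_0):=\int f_s(x_0,\xi)\psi(d\xi) = 1$, and $r(\xi_0)<\infty$ implies $\beta(\xi_0):=\int f_s(x,\xi_0)\varphi(dx) = 1$. If $a(x_0)<\infty$ then $A_n(x_0,\cdot)$ is supported in the closed ball $\bar B_{a(x_0)}(x_0)$, whose $\psi$-mass is finite by local finiteness; monotone convergence in the identity $\int A_n(x_0,\cdot)\psi = 1 + \int R_{n-1}(x_0,\cdot)\psi$ gives $\int A(x_0,\cdot)\psi = 1 + \int R(x_0,\cdot)\psi$ with both terms finite, hence $\alpha(x_0)= 1$. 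Since $A_n - R_n$ is in general not monotone in $n$, for the center side I would instead fix the first stage $n_0$ at which $r_{n_0}(\xi_0) < \infty$; for $n\geq n_0$ the inequality $r_n\leq r_{n_0}$ forces $R_n(\cdot,\xi_0) = A_n(\cdot,\xi_0)$ outside $\bar B_{r_{n_0}}(\xi_0)$, so $(A_n - R_n)(\cdot,\xi_0)$ is supported in that fixed ball and dominated there by the $\varphi$-integrable function $\mathbf{1}_{\bar B_{r_{n_0}}(\xi_0)}$; dominated convergence in the identity $\int (A_n - R_n)(\cdot,\xi_0)\varphi = 1$ then gives $\beta(\xi_0)=1$. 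Putting these together, the sets $U := \{x: \alpha(x) < 1\}$ and $V := \{\xi: \beta(\xi) < 1\}$ coincide with $\{a = \infty\}$ and $\{r = \infty\}$ respectively. A direct reading of the algorithm further shows $A(x,\xi) = 1$ for every $x \in U$ and $\xi$, and $R(x,\xi) = 0$ for every $\xi \in V$ and $x$; thus $f_s \equiv 1$ on $U \times V$.

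For (c), by flow-adaptedness the random variable $\Psi_\omega(V_\omega)$ is flow-invariant, hence almost surely constant by ergodicity. Suppose toward contradiction that $\mathbb P_\Psi[0 \in V] > 0$. The refined Campbell theorem~\eqref{eq:Campbell}, applied to the flow-adapted indicator of $V$, gives $\mathbb E[\Psi(V \cap B)] = \lambda\mathcal L_d(B)\mathbb P_\Psi[0\in V]$ for every Borel $B$, forcing $\mathbb E[\Psi(V)] = \infty$ on expansion of $B$; combined with constancy, $\Psi(V) = \infty$ a.s. If in addition $\Phi(U) > 0$ with positive probability, pick any $x_0 \in U$ in such a realization: then $\alpha(x_0) \geq \int_V f_s(x_0,\xi)\Psi(d\xi) = \Psi(V) = \infty$, contradicting $\alpha\leq 1$. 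Hence almost surely either $\Phi(U) = 0$ or $\Psi(V) = 0$. Closing with Neveu's formula~\eqref{eq:Neveu} applied to the non-negative flow-adapted integrand obtained from $1 - f_s$ yields $\lambda_\Phi\mathbb E_\Phi[1-\alpha(0)] = \lambda_\Psi\mathbb E_\Psi[1-\beta(0)]$; the equal-intensity assumption turns the vanishing of one expectation into the vanishing of the other, so almost surely $\alpha = 1$ for $\Phi$-a.e.\ $x$ and $\beta = 1$ for $\Psi$-a.e.\ $\xi$, i.e., $f_s$ is a balancing constrained density. The conceptual heart of the proof is the ergodic dichotomy in (c); the main technical subtlety is the dominated-convergence step in (b), which needs the observation that once a center starts rejecting, the accepted density's support is locked into a fixed ball.
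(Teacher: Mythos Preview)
Your proof is correct, but it takes a route that differs from the paper's in an interesting way. The paper proves Theorem~\ref{thm:site-optimalBalancing} in two strokes: first it shows (Theorem~\ref{thm:stable}) that $f_s$ is a \emph{stable} constrained density, and then it invokes the general Theorem~\ref{thm:coupling}\eqref{thm:coupling:=}, which says that \emph{any} flow-adapted stable constrained density is balancing when the intensities agree. The dichotomy there comes from Lemma~\ref{lemma:satedOrExhausted} (an unexhausted site and an unsated center would form an unstable pair), combined with the same Neveu/mass-transport identity you use. By contrast, you never invoke stability as an abstract notion: you exploit the algorithmic structure directly, showing via the radii $a(\cdot)$ and $r(\cdot)$ that $f_s\equiv 1$ on $U\times V$, and then use $\alpha\leq 1$ to force $\Phi(U)=0$ whenever $\Psi(V)=\infty$. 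This is morally the same mechanism (your ``$f_s\equiv 1$ on $U\times V$'' is exactly what Lemma~\ref{lemma:unexhaustedApply} plus stability would give), but packaged without the intermediate abstraction. The paper's modular route buys generality---Theorem~\ref{thm:coupling} applies to any stable density, not just $f_s$---while your direct route is more self-contained for this specific statement.

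Two minor imprecisions worth noting. First, you claim $U=\{a=\infty\}$ and $V=\{r=\infty\}$, but you only prove (and only need) the inclusions $U\subseteq\{a=\infty\}$ and $V\subseteq\{r=\infty\}$. Second, the identity $\lambda_\Phi\mathbb E_\Phi[1-\alpha(0)]=\lambda_\Psi\mathbb E_\Psi[1-\beta(0)]$ does not follow from applying Neveu's formula to $1-f_s$ (those integrals may be infinite); rather, one applies Neveu to $f_s$ itself to get $\lambda_\Phi\mathbb E_\Phi[\alpha(0)]=\lambda_\Psi\mathbb E_\Psi[\beta(0)]$ (this is Lemma~\ref{lemma:doubleCounting} at $t=\infty$), and then subtracts from $\lambda_\Phi=\lambda_\Psi$. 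Neither point affects the validity of your argument.
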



\verified{ {According to Theorem~\ref{thm:site-optimalBalancing}}, {Algorithm~\ref{alg:Gale}} answers a question in~\cite{ThLa09} asking for a constructive algorithm to find a \verified{flow-adapted} balancing transport kernel, although the existence of such a transport kernel is proved to be equivalent to the equality of the \textit{sample intensities} in~\cite{ThLa09}, which is granted here by ergodicity.}
 
 As a result of Theorem~\ref{thm:site-optimalBalancing}, one can construct a shift-coupling between an ergodic random measure and its Palm version as in the following theorem.
 \verified{
 	The key tool for this construction is the following well-known theorem in the literature.
}

 \begin{theorem}[Shift-coupling]
\label{thm:extraHead}
\verified{
Let $\Psi$ be an ergodic random measure on $\mathbb R^d$ with positive and finite intensity $\lambda_{\Psi}$ and let $\Phi=\lambda_{\Psi}\mathcal L_d$.
Let $F=F_{\omega}(x,\xi)$ be a flow-adapted \verified{function which is almost surely a} balancing density for {$\Phi_{\omega}$ and $\Psi_{\omega}$}; e.g.} the site-optimal density for {$\Phi_{\omega}$ and $\Psi_{\omega}$.}
\begin{enumerate}[(i)]
\item
\label{thm:extraHead:1}
If $Y$ is a random vector such that its conditional distribution given $\Psi$ is $F(0,\cdot)\Psi$, then $Y$ gives a shift-coupling of $\Psi$ and its Palm version; i.e. $\theta_{Y}\Psi$ has the same distribution as the Palm version of $\Psi$.
\item
\label{thm:extraHead:2}
On the probability space $(\Omega,\mathcal F,\mathbb P_{\Psi})$, if $Y$ is a random vector such that its conditional distribution given $\Psi$ is \verified{$F(\cdot,0)\Phi$,}
then the random measure $(\mathbb P_{\Psi},\theta_{Y}\Psi)$ has the same distribution as $(\mathbb P,\Psi)$. In words, $\theta_{Y}\Psi$ is a reconstruction of $\Psi$ from its Palm version.
 \end{enumerate}
\end{theorem}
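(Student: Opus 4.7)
The plan is to verify both parts by computing the expectation of $H(\theta_Y)$ for an arbitrary bounded measurable test function $H:\Omega\to\mathbb R$, since this determines the law on $\Omega$ and a fortiori the law of the random measure $\theta_Y\Psi$. The two identities will be reduced to the refined Campbell theorem~\eqref{eq:Campbell} in part~(\ref{thm:extraHead:1}) and to Neveu's exchange formula~\eqref{eq:Neveu} in part~(\ref{thm:extraHead:2}). Two ingredients drive the manipulations: the flow-adaptedness of $F$ gives the identities $F_\omega(0,s)=F_{\theta_s\omega}(-s,0)$ and $F_\omega(s,0)=F_{\theta_s\omega}(0,-s)$, which let us push translations in and out of $F$; and the fact that $\Phi=\lambda_\Psi\mathcal L_d$ is deterministic, combined with stationarity, yields $\mathbb P_\Phi=\mathbb P$, so Neveu's formula reduces to an identity directly between $\mathbb P$ and $\mathbb P_\Psi$.

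For part~(\ref{thm:extraHead:1}), condition on $\Psi$ and use the prescribed conditional law of $Y$ to write
\[
\omid{H(\theta_Y)}=\omid{\int_{\mathbb R^d} H(\theta_s)\,F_\omega(0,s)\,\Psi(ds)}.
\]
Substituting $F_\omega(0,s)=F_{\theta_s\omega}(-s,0)$ and applying~\eqref{eq:Campbell} to the kernel $(\omega,s)\mapsto H(\omega)F_\omega(-s,0)$ turns the right-hand side into $\lambda_\Psi\omidPalm{\Psi}{H\cdot\int F(-s,0)\,ds}$. Under $\mathbb P_\Psi$ the origin plays the role of a $\Psi$-typical center, so the center-side balancing condition $\lambda_\Psi\int F(x,\xi)\,dx=1$ gives $\int F(-s,0)\,ds=1/\lambda_\Psi$ almost surely; the two factors of $\lambda_\Psi$ cancel and we obtain $\omidPalm{\Psi}{H}$, as required.

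For part~(\ref{thm:extraHead:2}), start under $\mathbb P_\Psi$ and use $\Phi(ds)=\lambda_\Psi\,ds$ to write
\[
\omidPalm{\Psi}{H(\theta_Y)}=\omidPalm{\Psi}{\int H(\theta_s)\,F_\omega(s,0)\,\Phi(ds)}.
\]
Rewrite $F_\omega(s,0)=F_{\theta_s\omega}(0,-s)$ and apply~\eqref{eq:Neveu} with $\tilde H(\omega,s):=H(\omega)F_\omega(0,s)$, whose shape $\tilde H(\theta_s,-s)$ matches the right-hand side of Neveu's formula after the substitution $s\mapsto -s$. Using $\lambda_\Phi=\lambda_\Psi$ and $\mathbb P_\Phi=\mathbb P$, the resulting left-hand side collapses to $\omid{H\cdot\int F(0,\xi)\Psi(d\xi)}$, and the site-side balancing condition makes the inner integral equal to $1$ almost surely, producing $\omid{H}$. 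The main point requiring care in both parts is verifying that $F_\omega(0,\cdot)\Psi$ and $F_\omega(\cdot,0)\Phi$ are genuine probability measures almost surely (under $\mathbb P$ and $\mathbb P_\Psi$, respectively): the balancing identities hold only for $\Phi$-a.e.\ site and $\Psi$-a.e.\ center, but stationarity combined with flow-adaptedness of $F$ upgrades these almost-everywhere statements to hold at the specific point $0$ almost surely, which is exactly what is needed to make the above calculations rigorous.
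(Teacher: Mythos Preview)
Your argument is correct. The paper's own proof, however, is essentially a one-line citation: it observes that $\mathbb P_\Phi=\mathbb P$ (since $\Phi$ is a deterministic multiple of Lebesgue measure) and then invokes Corollary~4.7 of Last and Thorisson~\cite{ThLa09}, which directly gives $\omidPalm{\Psi}{H(\Psi)}=\omid{H(\theta_Y\Psi)}$ and $\omidPalm{\Psi}{H(\theta_Y\Psi)}=\omid{H(\Psi)}$ for the two parts.

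What you do differently is unpack that external result using only the tools already stated in the preliminaries: the refined Campbell theorem~\eqref{eq:Campbell} for part~(\ref{thm:extraHead:1}) and Neveu's exchange formula~\eqref{eq:Neveu} for part~(\ref{thm:extraHead:2}), together with the flow-adaptedness identity $F_\omega(0,s)=F_{\theta_s\omega}(-s,0)$. This makes the proof self-contained relative to the paper and exposes exactly where each hypothesis (balancing, flow-adaptedness, $\mathbb P_\Phi=\mathbb P$) enters. The paper's approach is terser and delegates these mechanics to~\cite{ThLa09}; yours is more transparent but slightly longer. One small remark: your parenthetical about ``the substitution $s\mapsto -s$'' in part~(\ref{thm:extraHead:2}) is unnecessary, since Neveu's formula as stated already has the form $\tilde H(\theta_s,-s)$ on the $\Psi$-side, which matches your integrand directly once you use $F_\omega(s,0)=F_{\theta_s\omega}(0,-s)$. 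Your identification of the delicate point---that the balancing equalities at the specific point $0$ require the standard Palm/stationarity argument to pass from ``$\Phi$-a.e.\ site'' and ``$\Psi$-a.e.\ center'' to ``the origin, $\mathbb P$-a.s.'' and ``the origin, $\mathbb P_\Psi$-a.s.''---is exactly right and is the only place needing care.
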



\verified{
Theorem~\ref{thm:extraHead} is a direct {implication} of Corollary~4.7 in~\cite{ThLa09} (see also Theorem~16 in~\cite{HoPe05} for point processes). {Nevertheless}, we state it for the purpose of this paper.
{In part~\eqref{thm:extraHead:1}, 
we call the random vector $Y$ an \defstyle{extra head scheme} for $\Psi$ with the terminology of~\cite{HoPe05}.}
One can also replace $F(0,\cdot)\Psi$ by $T(0,\cdot)$, where $T$ is a flow-adapted transport kernel which is almost surely $(\Phi,\Psi)$-balancing.
\verified{If moreover $T$ is a balancing allocation} {that is a function of $\Psi$,} provided that it exists, then $Y$ will be a deterministic vector conditional on $\Psi$. This is called a \defstyle{non-randomized extra head scheme} {(note that to construct $Y$ in the general case, one may need extra randomness; i.e. to extend the probability space)} and its existence is proved in the case when $\Psi$ is an ergodic simple point process in~\cite{HoPe06} {by providing a construction}. Algorithm~\ref{alg:Gale} is a generalization of \verified{that} construction. 
\verified{Although it does not provide a non-randomized extra head scheme in Theorem~\ref{thm:extraHead}, it has the property that the distribution of $Y$ conditional on $\Psi$ depends only on the realization of $\Psi$.} We don't know whether non-randomized extra head schemes always exist for general random measures or not, as stated in Open Problem~\ref{conj:non-randomized}. However, we show that
  a flow-adapted balancing allocation may not exist for \verified{two} general random measures, as shown in examples~\ref{ex:sqrt2} \verified{and~\ref{ex:noAllocation}}. \verified{Some results and open problems 
about {this problem}
 are provided in Subsection~\ref{sec:allocations}}.
}

The claim of Theorem~\ref{thm:extraHead} is similar to the  \myemph{inversion formula} (\cite{Me67}, Satz~2.4. See also~\cite{ThLa09}, (2.6)).  This formula recovers the distribution of $\Psi$ from its Palm distribution. Here, the difference is that the balancing property of $F$ ensures that the recovery can be done using a shift-coupling as defined in the theorem. 



 \begin{remark}
\label{rem:choice}
In Algorithm~\ref{alg:Gale}, one could define $A_n(x_0,\cdot)$ on $\partial \ball{x_0}{a_n(x_0)}$ and $R_{n}(\cdot,\xi_0)$ on $\partial \ball{\xi_0}{r_n(\xi_0)}$ in other ways. If this is done such that  Lemma~\ref{lemma:increasing} and Remark~\ref{rem:GaleShapley}  hold, then all of our results remain valid.
\end{remark}

\begin{remark}
Being stationary is crucial in Theorem~\ref{thm:coupling}. 
\verified{As an example, Example~\ref{ex:ZR} shows cases in which the site-optimal density is not balancing although
the (non-random) measures in the example may have equal \myemph{spatial intensities};} i.e. $\varphi([-r,r]) \sim \psi([-r,r])$ as $r\rightarrow\infty$ (see also{~\cite{HoPe06}}). 
{As mentioned in~\cite{HoPe-Tail}, it seems difficult to give a  sufficient condition for \verified{deterministic} discrete \verified{sets} \verified{(and therefore, for measures)} to ensure that the site-optimal density is balancing.}
\end{remark}
\verified{
\subsection{
Stability of \verified{Constrained} \verified{Densities}
}
\label{sec:stability}
}

 \begin{definition}
\label{def:stable}
Let \verified{$\varphi,\psi\in M$ be given and} $f$ be a given \verified{constrained} density. We say that a site $x_0$ is \defstyle{$f$-exhausted} if \[\int_{\myXi}{f(x_0,\xi)\psi(d\xi)}=1\] and \defstyle{$f$-unexhausted} otherwise. Similarly, a center $\xi_0$ is \defstyle{$f$-sated} if \[\int_{{\myX}}{f(x,\xi_0)\varphi(dx)}=1\] and \defstyle{$f$-unsated} otherwise. We say that a site $x_0$, \defstyle{$f$-desires} a center $\xi_0$ if $f(x_0,\xi_0)<1$ and either $x_0$ is $f$-unexhausted or
\begin{equation*}
\exists \xi_1\in \myXi: \dist{x_0}{\xi_1} > \dist{x_0}{\xi_0} \text{ and } f(x_0,\xi_1)>0.
\end{equation*}

 Similarly, we say $\xi_0$, \defstyle{$f$-desires} $x_0$ if $f(x_0,\xi_0)<1$ and either $\xi_0$ is $f$-unsated or
\begin{equation*}
\exists x_1\in {\myX}: \dist{x_1}{\xi_0}>\dist{x_0}{\xi_0} \text{ and } f(x_1,\xi_0)>0.
\end{equation*}

 We drop the prefix '$f$-' when there is no confusion.
A \verified{constrained} density $f$ is called \defstyle{stable} if there is no $(x_0,\xi_0)\in {\myX}\times \myXi$ such that $x_0$ desires $\xi_0$ and $\xi_0$ desires $x_0$.
\end{definition}

 In this definition, each sites prefers the centers {according to Euclidean distance} and vice-versa.
{See examples~\ref{ex:ZR} and~\ref{ex:Z*R} in Section~\ref{sec:examples} for examples of a stable and an unstable \verified{constrained} density.}

\verified{
}


\begin{theorem}[Stability]
\label{thm:stable}
The {site-optimal density} 
is a stable \verified{constrained} density \verified{and depends on $\varphi$ and $\omega$ in a measurable and flow-adapted manner.} 
\end{theorem}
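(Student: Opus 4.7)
The plan is to dispatch the measurability and constrained-density claims quickly, and then prove stability by a careful case analysis on the two directions of desire.

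For measurability and flow-adaptedness, $f_s = A - R$ is a pointwise monotone limit of $A_n - R_n$, so these properties transfer from Lemma~\ref{lemma:increasing}. The bound $0 \le R_n \le A_n \le 1$ passes to the limit, giving $0 \le f_s \le 1$. For sub-balancing, the algorithm enforces $\int (A_n(x_0,\xi) - R_{n-1}(x_0,\xi))\,\psi(d\xi) \le 1$ for every $x_0$, with equality whenever $a_n(x_0) < \infty$; symmetrically $\int (A_n(x,\xi_0) - R_n(x,\xi_0))\,\varphi(dx) \le 1$ for every $\xi_0$, with equality whenever $r_n(\xi_0) < \infty$. Since $A_n - R_{n-1}$ and $A_n - R_n$ are non-negative and converge pointwise to $f_s$, Fatou's lemma yields $\int f_s(x_0,\xi)\,\psi(d\xi) \le 1$ and $\int f_s(x,\xi_0)\,\varphi(dx) \le 1$. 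Hence $f_s$ is a constrained density.

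The stability argument rests on four auxiliary properties of the limits. The first pair is geometric and follows directly from the explicit form of $A_n, R_n$ combined with $a_n \uparrow a$ and $r_n \downarrow r$: (i) $A(x,\xi) = 1$ when $\abs{x-\xi} < a(x)$ and $A(x,\xi) = 0$ when $\abs{x-\xi} > a(x)$; (ii) $R(x,\xi) = 0$ when $\abs{x-\xi} < r(\xi)$ and $R(x,\xi) = A(x,\xi)$ when $\abs{x-\xi} > r(\xi)$. The second pair is a saturation statement: (iii) if $a(x) < \infty$ then $x$ is $f_s$-exhausted, and (iv) if $r(\xi) < \infty$ then $\xi$ is $f_s$-sated. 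To prove (iii), observe that $a(x) < \infty$ forces $a_n(x) \le a(x)$ for every $n$, so $\int (A_n - R_{n-1})(x,\xi)\,\psi(d\xi) = 1$ for every $n$; the integrands are supported in the fixed ball $\ball{x}{a(x)}$ of finite $\psi$-measure, so dominated convergence yields $\int f_s(x,\xi)\,\psi(d\xi) = 1$. Property (iv) is analogous, using that $r_n(\xi) < \infty$ eventually.

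Now suppose, toward a contradiction, that $x_0$ and $\xi_0$ mutually $f_s$-desire each other, so $f_s(x_0,\xi_0) < 1$. I first argue $A(x_0,\xi_0) = 1$: in the unexhausted subcase of $x_0$'s desire, (iii) gives $a(x_0) = \infty$ and then (i) gives $A(x_0,\xi_0) = 1$; in the other subcase, the witness $\xi_1$ satisfies $A(x_0,\xi_1) > 0$, so $\abs{x_0-\xi_1} \le a(x_0)$ by (i), whence $\abs{x_0-\xi_0} < \abs{x_0-\xi_1} \le a(x_0)$ and again $A(x_0,\xi_0) = 1$ by (i). Consequently $R(x_0,\xi_0) = 1 - f_s(x_0,\xi_0) > 0$, and (ii) forces $\abs{x_0-\xi_0} \ge r(\xi_0)$. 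Now use that $\xi_0$ desires $x_0$: the unsated subcase is ruled out because (iv) would give $r(\xi_0) = \infty$ and then (ii) would give $R(x_0,\xi_0) = 0$; in the remaining subcase the witness $x_1$ satisfies $R(x_1,\xi_0) < A(x_1,\xi_0)$, so the contrapositive of (ii) yields $\abs{x_1-\xi_0} \le r(\xi_0)$, contradicting $\abs{x_1-\xi_0} > \abs{x_0-\xi_0} \ge r(\xi_0)$. The main delicate point throughout is the behavior of $A$ and $R$ on the boundary spheres $\abs{x-\xi} = a(x)$ and $\abs{x-\xi} = r(\xi)$, where they can take intermediate values; happily, the strict inequalities built into the definition of ``desires'' always place $(x_0,\xi_0)$ strictly inside and the witnesses $\xi_1, x_1$ strictly outside these critical radii, so boundary behavior plays no role in the argument.
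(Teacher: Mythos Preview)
Your proof is correct and follows essentially the same route as the paper's. The paper isolates the key facts into a separate lemma (if $x_0$ $f_s$-desires $\xi_0$ then $A_n(x_0,\xi_0)=1$ eventually; if $\xi_0$ $f_s$-desires $x_0$ then $R_n(x_0,\xi_0)=0$ for all $n$) and then combines them to force $f_s(x_0,\xi_0)=1$; you instead package the same content as your geometric properties (i)--(iv) about the limits $A,R,a,r$ and run the identical contradiction. One cosmetic slip: $A_n-R_n$ need not be \emph{monotone} in $n$, so ``pointwise monotone limit'' is inaccurate, but since $A_n$ and $R_n$ are separately monotone and bounded the limit $f_s=A-R$ exists and inherits measurability and flow-adaptedness regardless.
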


\begin{theorem}
\label{thm:coupling}
Let $\Phi$ and $\Psi$ be ergodic random measures on $\mathbb R^d$ with positive and finite intensities $\lambda_{\Phi}$ and $\lambda_{\Psi}$.
Let $F=F_{\omega}(x,\xi)$ be a flow-adapted function \verified{which} is almost surely a stable \verified{constrained} density for $\Phi_{\omega}$ and $\Psi_{\omega}$. \verified{Then almost surely} 

 \begin{enumerate}[(i)]
 \item
\label{thm:coupling:=}
if $\lambda_{\Phi}=\lambda_{\Psi}$, then \verified{$F_{\omega}$ is $(\Phi_{\omega},\Psi_{\omega})$-balancing; i.e.} the set of unexhausted sites has zero $\Phi_{\omega}$-measure and the set of unsated centers has zero $\Psi_{\omega}$-measure. 
\item
\label{thm:coupling:<}
if $\lambda_{\Phi}<\lambda_{\Psi}$, then there is no unexhausted site but the set of unsated centers has an infinite $\Psi_{\omega}$-measure.
\item
\label{thm:coupling:2}
if $\lambda_{\Phi}>\lambda_{\Psi}$, then there is no unsated center but the set of unexhausted sites has an infinite $\Phi_{\omega}$-measure.
\end{enumerate}
\end{theorem}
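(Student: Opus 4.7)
Let $g(x):=\int F(x,\xi)\Psi(d\xi)$ and $h(\xi):=\int F(x,\xi)\Phi(dx)$, and write $U:=\{g<1\}$ and $V:=\{h<1\}$ for the sets of $F$-unexhausted sites and $F$-unsated centers. Flow-adaptedness of $F$ transfers to $g,h,U,V$, so $\Phi\identity_U$ and $\Psi\identity_V$ are themselves stationary random measures; let $\lambda_U$ and $\lambda_V$ denote their intensities. Ergodicity (together with the ergodic theorem for stationary random measures) then yields the dichotomy that almost surely $\Phi(U)\in\{0,\infty\}$, being $0$ iff $\lambda_U=0$, and likewise for $\Psi(V)$.

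The first step is the mass-conservation identity
\begin{equation*}
\lambda_\Phi\,\mathbb E_\Phi[g(0)]\;=\;\lambda_\Psi\,\mathbb E_\Psi[h(0)],
\end{equation*}
which I would obtain by applying Neveu's exchange formula~\eqref{eq:Neveu} with $H(\omega,s):=F_\omega(0,s)$, using the flow-adapted relation $F_{\theta_s\omega}(x,\xi)=F_\omega(x+s,\xi+s)$ (so that $F_{\theta_s\omega}(0,-s)=F_\omega(s,0)$) to recognise both sides. Since $g,h\leq 1$, the refined Campbell theorem then gives the equivalences $\lambda_U=0\iff \mathbb E_\Phi[g(0)]=1$ and $\lambda_V=0\iff \mathbb E_\Psi[h(0)]=1$.

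The second step is the crucial stability consequence: for every $x\in U$ and every $\xi\in V$ one must have $F(x,\xi)=1$. Indeed, if $F(x,\xi)<1$, then the unexhaustedness of $x$ already makes $x$ $F$-desire $\xi$ in the sense of Definition~\ref{def:stable}, and the unsatedness of $\xi$ already makes $\xi$ $F$-desire $x$, violating stability. Hence for any $x\in U$,
\begin{equation*}
g(x)\;\geq\;\int_V F(x,\xi)\Psi(d\xi)\;=\;\Psi(V),
\end{equation*}
so $\Psi(V)<1$ whenever $U$ is nonempty. Combined with the dichotomy, this excludes simultaneous positivity of $\lambda_U$ and $\lambda_V$: both positive would force $U\neq\emptyset$ and $\Psi(V)=\infty$ a.s., contradicting the displayed inequality.

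The three cases now assemble routinely. In case~(i), $\lambda_\Phi=\lambda_\Psi$ makes the exchange identity symmetric, so $\lambda_U=0\iff\lambda_V=0$; step two rules out the common positive alternative, and both must vanish, which is the balancing statement. In case~(ii), $\lambda_\Phi<\lambda_\Psi$ forces $\lambda_\Psi\mathbb E_\Psi[h(0)]\leq\lambda_\Phi<\lambda_\Psi$, so $\lambda_V>0$ and hence $\Psi(V)=\infty$ a.s.; step two then rules out $\lambda_U>0$, giving $\lambda_U=0$. Case~(iii) follows by swapping the roles of $\Phi$ and $\Psi$. The main technical obstacle is setting up Neveu's identity correctly — choosing the test kernel $H$ and tracking the flow-adapted translation on $F$ so as to reduce $\mathbb E_\Psi[\int H(\theta_s,-s)\Phi(ds)]$ to $\mathbb E_\Psi[h(0)]$. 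Once this identity is in hand, the stability step and the final case analysis are short.
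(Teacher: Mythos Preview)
Your proposal is correct and follows essentially the same route as the paper: the mass-conservation identity is the paper's Lemma~\ref{lemma:doubleCounting} (Neveu applied to $H(\omega,s)=F_\omega(0,s)$, with the truncation parameter set to $t=\infty$), your stability step is the paper's Lemma~\ref{lemma:satedOrExhausted}, and the ergodic dichotomy plus case split match the paper's equations~\eqref{eq:thm:coupling-1}--\eqref{eq:thm:coupling-4}.

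One small point: in case~(ii) you conclude only $\lambda_U=0$, but the theorem asserts the stronger statement that there is \emph{no} unexhausted site (not merely $\Phi(U)=0$). Your own step two already delivers this: once you know $\Psi(V)=\infty$ a.s., the inequality $g(x)\geq\Psi(V)$ for any $x\in U$ forces $U=\emptyset$ outright, since $g\leq 1$. Make that explicit (and symmetrically in case~(iii)).
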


 Proposition~\ref{prop:spatialAverage} quantifies how far the centers (resp. sites) are from being sated (resp. exhausted) in average in the second (resp. third) case of Theorem~\ref{thm:coupling}.
 
 \verified{
 \begin{remark}
 	 Theorems~\ref{thm:site-optimalBalancing}, \ref{thm:extraHead} and~\ref{thm:coupling} can be extended to stationary non-ergodic cases.  	 
 	 The necessary and sufficient condition on $\Phi$ and $\Psi$ is the equality of the \textit{sample intensities}
 	 \new{$\lim_{r\rightarrow\infty} {\Phi(\ball 0r)}/{\mathcal L(\ball 0r)}$ and $\lim_{r\rightarrow\infty} {\Psi(\ball 0r)}/{\mathcal L(\ball 0r)}$.
 	  This condition is proved to be necessary and sufficient for the existence of a flow-adapted balancing transport kernel in Theorem~5.1 in~\cite{ThLa09}.}
	\new{To do so}, we can slightly change the proofs and use conditional expectation with respect to the invariant sigma-filed (using the ideas in~\cite{ThLa09}) or obtain the general versions simply by applying the ergodic decomposition theorem.
\end{remark}
}

\subsection{\verified{Other Properties of Stable \verified{Constrained} {densities}}}
\label{sec:other}
\verified{ In this subsection we study monotonicity, optimality, uniqueness and boundedness of territories of stable \verified{constrained} densities.}
In general, uniqueness is not granted in the deterministic case (see~\cite{HoPe06} for a counter example). The manner of choosing between equidistant points in Algorithm~\ref{alg:Gale} (see Remark~\ref{rem:choice}) is another obstacle for uniqueness;
e.g. when $\varphi$ and $\psi$ are measures on $\mathbb Z^d$.
Assumption~\ref{assump:uniqueness} gives a sufficient condition for uniqueness of the choice in Remark~\ref{rem:choice} for almost all points. {However, we will prove uniqueness of stable \verified{constrained} densities only in the stationary case in Theorem~\ref{thm:uniqueness}.} {\verified{It can be seen that} the first condition in Assumption~\ref{assump:uniqueness} means that, for $\varphi$-a.e. site $x$, the boundary of no ball centered at $x$ can be partitioned in two disjoint sets with positive $\psi$-measure.}
This assumption is not difficult to satisfy as shown by Proposition~\ref{prop:assumption}.

\begin{assumption}
\label{assump:uniqueness}
For $\varphi$-a.e. site $x$ and $\psi$-a.e. center $\xi$, we have
\begin{eqnarray}
\label{eq:assump:uniqueness1}
\forall r>0, &\exists s\in\partial \ball xr:& \psi(\partial\ball xr \backslash \{s\})=0,\\
\label{eq:assump:uniqueness2}
\forall r>0, &\exists s\in\partial \ball {\xi}r:& \varphi(\partial\ball {\xi}r \backslash \{s\})=0.
\end{eqnarray}

 \end{assumption}

 \begin{proposition}
\label{prop:assumption}
If at least one of $\varphi$ and $\psi$  \verified{assigns zero to all spheres and all affine hyperplanes of $\mathbb R^d$; e.g.} is absolutely continuous with respect to the Lebesgue measure, then Assumption~\ref{assump:uniqueness} holds. 
\end{proposition}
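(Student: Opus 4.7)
By the symmetry between conditions~\eqref{eq:assump:uniqueness1} and~\eqref{eq:assump:uniqueness2} under swapping $\varphi$ and $\psi$, I may assume without loss of generality that $\psi$ charges neither any sphere nor any affine hyperplane of $\mathbb R^d$. Condition~\eqref{eq:assump:uniqueness1} is then trivial, since $\psi(\partial\ball{x}{r})=0$ for every $x$ and $r$, so $\psi(\partial\ball{x}{r}\setminus\{s\})=0$ for any $s$. It remains to establish condition~\eqref{eq:assump:uniqueness2}, which asserts that for $\psi$-a.e. $\xi$, every sphere centered at $\xi$ carries $\varphi$-mass supported on at most one point.

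The plan is to decompose $\varphi=\varphi_d+\varphi_c$ into its atomic and diffuse parts and treat them separately. For the atomic part, I would enumerate the countably many atoms $\{x_i\}$ of $\varphi_d$ and observe that a sphere centered at $\xi$ contains two distinct atoms $x_i\neq x_j$ iff $\xi$ lies on their perpendicular bisector, an affine hyperplane $H_{ij}$. By the hyperplane hypothesis, $\psi(H_{ij})=0$ for each pair, hence $\psi\bigl(\bigcup_{i\neq j}H_{ij}\bigr)=0$.

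For the diffuse part, the idea is to apply Tonelli to the set
\[
E = \bigl\{(\xi,x,y)\in(\mathbb R^d)^3 : x\neq y,\ \abs{x-\xi}=\abs{y-\xi}\bigr\}
\]
under the $\sigma$-finite product measure $\psi\otimes\varphi_c\otimes\varphi_c$. Integrating $\psi$ first, each slice $\{\xi:\abs{x-\xi}=\abs{y-\xi}\}$ with $x\neq y$ is a hyperplane and hence of $\psi$-measure zero, so the total is zero. Integrating $\varphi_c\otimes\varphi_c$ first, the diagonal $\{x=y\}$ contributes nothing since $\varphi_c$ is diffuse, and for fixed $\xi$ a direct computation gives
\[
\iint \mathbf{1}_{\abs{x-\xi}=\abs{y-\xi}}\,\varphi_c(dx)\varphi_c(dy) = \sum_{r>0}\varphi_c(\partial\ball{\xi}{r})^2,
\]
with at most countably many nonzero terms. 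Equating the two iterated integrals forces $\varphi_c(\partial\ball{\xi}{r})=0$ for every $r>0$ and $\psi$-a.e. $\xi$, which combined with the atomic case yields condition~\eqref{eq:assump:uniqueness2}.

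The main obstacle is the diffuse part, where the atom-counting argument no longer applies and a Tonelli-style double counting is required; the hyperplane hypothesis on $\psi$ is precisely the ingredient that makes the key slice integrals vanish, reflecting the fact that equidistance between two distinct points is a hyperplane condition on $\xi$.
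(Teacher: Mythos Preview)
Your argument is correct and takes a genuinely different route from the paper's. The paper (with the roles of $\varphi$ and $\psi$ swapped relative to yours) does not decompose the second measure into atomic and diffuse parts; instead it introduces a hierarchy of \emph{$k$-dimensional bad spheres}---lower-dimensional spheres $S$ with $\psi(S\setminus\{s\})>0$ for every $s$---and \emph{minimal} bad spheres among them, shows that failure of the assumption at a point $x$ forces $x$ to be equidistant from all points of some minimal bad sphere (hence to lie in a proper affine subspace), and then argues by a pigeonhole/measure argument that minimal bad spheres are countable.

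Your approach is more elementary and arguably cleaner: the atomic part is handled by the observation that equidistance from two atoms is a hyperplane condition, and the diffuse part by a Tonelli double-count that directly exploits the same hyperplane observation. You avoid the geometric bookkeeping of $k$-dimensional spheres and the counting of minimal bad spheres. The paper's approach, on the other hand, treats the measure uniformly without splitting it and yields the slightly sharper geometric insight that the ``bad'' structure is captured by countably many lower-dimensional spheres; but for the purposes of the proposition this extra structure is not needed, and your Fubini argument gets to the conclusion more quickly.
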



{The following auxiliary functions measure how far the mass is transported from sites and to centers, given a \verified{sub-balancing} density $f$.}
\begin{definition}
\label{def:g,h}
Let $f$ be a \verified{sub-balancing} density 
as in Definition~\ref{def:compatibleDensity}. For $t\in[0,\infty]$ define
\begin{eqnarray*}
\g xft{\psi} &:=& \int_{\mathbb R^d}f(x,\xi)1_{\abs{x-\xi}\leq t}\psi(d\xi),
\\
\h {\xi}ft{\varphi} &:=& \int_{\mathbb R^d} f(x,\xi)1_{\abs{x-\xi}\leq t}\varphi(dx).
\end{eqnarray*}
\end{definition}
\begin{remark}
\label{rem:desire}
If $f(x,\xi)<1$ and $\g xf{\dist x{\xi}}{\psi} <1$, then $x$ desires $\xi$. Similarly, if $f(x,\xi)<1$ and $\h {\xi}f{\dist{x}{\xi}}{\varphi}<1$, then $\xi$ desires $x$. {Note that the converse is not true; i.e. $x$ may desire $\xi$ even if $\g xf{\dist x{\xi}}{\psi}=1$ since $f(x,\cdot)$ can be positive on a $\psi$-null set that contains a center farther than $\xi$ to $x$.}
\end{remark}

 \begin{proposition}
\label{prop:spatialAverage}
In the setting of Theorem~\ref{thm:coupling}, one has
\begin{eqnarray*}
\lim_{r\rightarrow\infty}\frac 1{\Phi(\ball 0r)}\int_{\ball 0r} \g xF{\infty}{\Psi} \Phi(dx) &=& \min\{1,\frac{\lambda_{\Psi}}{\lambda_{\Phi}}\}, \text{a.s.}\\
\lim_{r\rightarrow\infty} \frac 1{\Psi(\ball 0r)} \int_{\ball 0r} \h {\xi}F{\infty}{\Phi}\Psi(d\xi) &=& \min\{1,\frac{\lambda_{\Phi}}{\lambda_{\Psi}}\}, \text{a.s.}
\end{eqnarray*}
\end{proposition}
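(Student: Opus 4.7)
The strategy is to recognise the spatial average as a Cesàro mean along the flow, pass to an expectation under the Palm distribution of $\Phi$, and evaluate it via Neveu's exchange formula together with the dichotomy of Theorem~\ref{thm:coupling}.

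Concretely, set $H(\omega):=\int F_{\omega}(0,\xi)\,\Psi_{\omega}(d\xi)$. Flow-adaptation of $F$ together with the translation action on measures gives, after the substitution $\xi'=\xi+x$, that $\g xF{\infty}{\Psi}(\omega)=H(\theta_x\omega)$ for every $x$. Now apply the spatial ergodic theorem to the two jointly stationary random measures $\Phi$ and $H\Phi$ (stationarity of the latter uses that $H$ is flow-adapted, integrability uses $0\le H\le 1$): the first gives $\Phi(\ball 0r)/\mathcal L_d(\ball 0r)\to\lambda_{\Phi}$ a.s., while the second gives $\mathcal L_d(\ball 0r)^{-1}\int_{\ball 0r}H(\theta_x)\,\Phi(dx)\to\lambda_{\Phi}\,\omidPalm{\Phi}{H}$ a.s., where the limiting intensity of $H\Phi$ is identified via the refined Campbell formula~\eqref{eq:Campbell}. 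Taking the ratio,
\[
\lim_{r\to\infty}\frac{1}{\Phi(\ball 0r)}\int_{\ball 0r}H(\theta_x\omega)\,\Phi(dx)=\omidPalm{\Phi}{H}\qquad\text{a.s.}
\]
To evaluate $\omidPalm{\Phi}{H}$, apply Neveu's exchange formula~\eqref{eq:Neveu} to the test function $h(\omega,s):=F_{\omega}(0,s)$; flow-adaptation gives $h(\theta_s\omega,-s)=F_{\theta_s\omega}(0,-s)=F_{\omega}(s,0)$, so
\[
\lambda_{\Phi}\,\omidPalm{\Phi}{\int F(0,\xi)\Psi(d\xi)}=\lambda_{\Psi}\,\omidPalm{\Psi}{\int F(x,0)\Phi(dx)}.
\]

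To finish, distinguish cases. If $\lambda_{\Phi}\le\lambda_{\Psi}$, then Theorem~\ref{thm:coupling} parts (i)--(ii) assert that almost surely every site is exhausted, so via~\eqref{eq:omidPalm} we have $H=1$ $\mathbb P_{\Phi}$-a.s.\ and $\omidPalm{\Phi}{H}=1=\min\{1,\lambda_{\Psi}/\lambda_{\Phi}\}$. If instead $\lambda_{\Phi}>\lambda_{\Psi}$, Theorem~\ref{thm:coupling} part (iii) yields that almost surely every center is sated, so $\int F(x,0)\Phi(dx)=1$ holds $\mathbb P_{\Psi}$-a.s., and Neveu's identity collapses to $\omidPalm{\Phi}{H}=\lambda_{\Psi}/\lambda_{\Phi}=\min\{1,\lambda_{\Psi}/\lambda_{\Phi}\}$. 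The second limit of the Proposition follows by the same argument with the roles of $\Phi$ and $\Psi$ interchanged. The only non-routine input is the identification of the spatial $\Phi$-average with $\omidPalm{\Phi}{H}$ via the ratio form of the ergodic theorem; boundedness of $H$ trivialises integrability, so the rest is a formal use of the exchange formula together with the dichotomy of Theorem~\ref{thm:coupling}.
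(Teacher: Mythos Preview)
Your proof is correct and follows essentially the same route as the paper. The paper's own argument is extremely terse: it simply cites equation~\eqref{eq:thm:coupling-4} (namely $\omidPalm{\Phi}{1-\g 0F{\infty}{\Psi}}=\max\{0,1-\lambda_{\Psi}/\lambda_{\Phi}\}$, which was derived inside the proof of Theorem~\ref{thm:coupling} via Lemma~\ref{lemma:doubleCounting}, i.e.\ Neveu's exchange formula, together with the dichotomy) and then appeals to Birkhoff's theorem for the spatial limit. You have unpacked precisely these ingredients: the ergodic-theorem identification of the spatial average with $\omidPalm{\Phi}{H}$, the Neveu identity, and the case split coming from Theorem~\ref{thm:coupling}. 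One very minor imprecision: in case $\lambda_{\Phi}=\lambda_{\Psi}$ the theorem only gives that \emph{$\Phi$-a.e.}\ site is exhausted (not literally every site), but this is exactly what is needed for $H=1$ to hold $\mathbb P_{\Phi}$-a.s.\ via~\eqref{eq:omidPalm}, so the argument goes through unchanged.
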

\verified{Note that the left hand sides in the above equations are} the \myemph{spatial averages} of $\g xF{\infty}{\Psi}$ and $\h {\xi}F{\infty}{\Phi}$.
{This is a measure of how far the sites and the centers are from being satisfied}.

 \begin{theorem}[Monotonicity]
\label{thm:monotonicity}
Let $(\varphi,\psi)$ and $(\mu,\nu)$ be two pairs of measures such that $\mu\geq\varphi$, $\nu\leq\psi$ and the pair $(\mu,\psi)$ satisfies Assumption~\ref{assump:uniqueness}.
Let $f$ be an arbitrary stable \verified{constrained} density for $(\mu,\nu)$ and consider the site-optimal density $f_s$ for $(\varphi,\psi)$ together with the functions in Definition~\ref{def:site-optimalDensity}.
\begin{enumerate}[(i)]
\item
\label{item:monotonicity1}
We have
\[
f+R\leq 1,\quad (\mu\otimes\psi)\text{-a.e.}
\]
\item
\label{item:monotonicity2}
For $(\mu\otimes\psi)$-a.e. $(x,\xi)$, if $x$ fully applies to $\xi$, for example if $\abs{x-\xi}<a(x)$, then $f(x,\xi)\leq f_s(x,\xi)$.
\item
\label{item:monotonicity3}
For $\mu$-a.e. site $x$ we have
\[
\g x{f}t{\nu} \leq \g x{f_s}t{\psi},\quad \forall t\in[0,\infty].
\]
\item
\label{item:monotonicity4}
For $\psi$-a.e. center $\xi$ we have
\[
\h {\xi}{f}t{\mu} \geq \h{\xi}{f_s}t{\varphi},\quad\forall t\in [0,\infty].
\]
\end{enumerate}
\end{theorem}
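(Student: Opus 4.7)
The strategy is to prove (i) by induction on the stage $n$ of Algorithm~\ref{alg:Gale} applied to $(\varphi,\psi)$; (ii) then follows in the limit $n\to\infty$, (iii) from (ii) combined with the saturation property of the algorithm, and (iv) from (iii) via a direct stability argument. For (i), I will show $f+R_n\leq 1$ holds $(\mu\otimes\psi)$-a.e. The base case $n=0$ is immediate since $R_0\equiv 0$ and $f\leq 1$. For the inductive step, fix $\xi_0$ in the $\psi$-full-measure set satisfying \eqref{eq:assump:uniqueness2}. By the algorithm, $R_n(\cdot,\xi_0)$ differs from $R_{n-1}(\cdot,\xi_0)$ only on $\{|x-\xi_0|\geq r_n(\xi_0)\}$, where $R_n=A_n$, so the bound reduces to $f(x,\xi_0)+A_n(x,\xi_0)\leq 1$ on this region. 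The only nontrivial case is the critical set $E_{\xi_0}:=\{x : r_n(\xi_0)<|x-\xi_0|<a_n(x)\}$, on which $A_n=1$; the claim becomes $f(x,\xi_0)=0$ for $\mu$-a.e.\ $x\in E_{\xi_0}$.

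Assume for contradiction that $f(x_0,\xi_0)>0$ on a positive $(\mu\otimes\psi)$-measure set of pairs with $x_0\in E_{\xi_0}$. The goal is to derive instability of $f$ at some such pair. The inductive hypothesis $f(x_0,\cdot)\leq 1-R_{n-1}(x_0,\cdot)$ combined with the defining relation \eqref{eq:a_n} for $a_n(x_0)$ yields
\[
\int_{\ball{x_0}{a_n(x_0)}} f(x_0,\xi)\,\psi(d\xi) \leq \int_{\ball{x_0}{a_n(x_0)}}\bigl(1-R_{n-1}(x_0,\xi)\bigr)\psi(d\xi) \leq 1.
\]
Together with $\nu\leq\psi$ and the sub-balancing bound $\int f(x_0,\cdot)\,\nu\leq 1$, this forces the dichotomy: either $x_0$ is $f$-unexhausted, or these inequalities are tight and $x_0$ must send positive $f$-mass strictly beyond $\ball{x_0}{|x_0-\xi_0|}$ (using that $\xi_0$ lies in the interior of $\ball{x_0}{a_n(x_0)}$ with $f(x_0,\xi_0)>0$). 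In either case, $x_0$ $f$-desires $\xi_0$, after verifying $f(x_0,\xi_0)<1$ from the bound $R_{n-1}(x_0,\xi_0)<1$. A symmetric analysis using \eqref{eq:r_n}, $\mu\geq\varphi$, and the inductive hypothesis shows that $\xi_0$ $f$-desires $x_0$, contradicting the stability of $f$.

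Claim (ii) follows by letting $n\to\infty$ in (i): when $|x-\xi|<a(x)$, eventually $A_n(x,\xi)=1$, so $f_s(x,\xi)=1-R(x,\xi)\geq f(x,\xi)$. For (iii), fix $\mu$-a.e.\ $x$: if $t\leq a(x)$, then $\xi\in\ball{x}{t}$ gives $|x-\xi|<a(x)$, so (ii) together with $\nu\leq\psi$ yields $\g{x}{f}{t}{\nu}\leq\g{x}{f_s}{t}{\psi}$; if $a(x)<t<\infty$, then $\g{x}{f_s}{t}{\psi}\geq\g{x}{f_s}{a(x)}{\psi}=1\geq\g{x}{f}{t}{\nu}$ by \eqref{eq:a_n}, with the case $a(x)=\infty$ handled by continuity. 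Claim (iv) is obtained by a direct stability argument: assume $\h{\xi_0}{f}{t_0}{\mu}<\h{\xi_0}{f_s}{t_0}{\varphi}$ on a positive-measure set of $(\xi_0,t_0)$; using $\mu\geq\varphi$, there is $x_0\in\ball{\xi_0}{t_0}$ with $f_s(x_0,\xi_0)>f(x_0,\xi_0)\geq 0$, whence $|x_0-\xi_0|<a(x_0)$ and $f(x_0,\xi_0)<1$; applying (iii) at $t=|x_0-\xi_0|$ gives either $x_0$ $f$-unexhausted or $f$-mass of $x_0$ outside $\ball{x_0}{|x_0-\xi_0|}$, so $x_0$ $f$-desires $\xi_0$; dually, either $\xi_0$ is $f$-unsated or $\h{\xi_0}{f}{\infty}{\mu}=1>\h{\xi_0}{f}{t_0}{\mu}$ forces $\xi_0$ to receive $f$-mass from sites strictly outside $\ball{\xi_0}{t_0}$, so $\xi_0$ $f$-desires $x_0$; the resulting blocking pair contradicts stability.

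The principal obstacle is the inductive step of (i), specifically locating the farther $f$-center $\xi_1$ of $x_0$ in the case where $x_0$ is $f$-exhausted. This requires exploiting the tightness of the inductive bound integrated over $\ball{x_0}{a_n(x_0)}$ in conjunction with $\nu\leq\psi$ to force $f$-mass outside $\ball{x_0}{|x_0-\xi_0|}$, and may involve a careful sub-case analysis of where $R_{n-1}(x_0,\cdot)$ saturates. Boundary sphere contributions throughout are controlled using Assumption~\ref{assump:uniqueness}.
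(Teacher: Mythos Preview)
There is a genuine gap in your inductive step for part~(i), specifically in the claim that a ``symmetric analysis'' shows $\xi_0$ $f$-desires $x_0$. The inductive hypothesis $f+R_{n-1}\le 1$ gives only an \emph{upper} bound on $f$; there is no lower bound available that would force $\xi_0$'s $f$-territory to extend beyond $|x_0-\xi_0|$. Concretely, nothing prevents $\xi_0$ from being $f$-sated with all of its $f$-mass received from sites in $\ball{\xi_0}{|x_0-\xi_0|}$, in which case $\xi_0$ does \emph{not} $f$-desire $x_0$ and your proposed blocking pair fails. The asymmetry is structural: $R_{n-1}$ is a center-side rejection function, and the hypothesis $f\le 1-R_{n-1}$ tells you how much a \emph{site} can send, not how far a \emph{center} must reach. (A secondary issue: your justification that $f(x_0,\xi_0)<1$ ``from the bound $R_{n-1}(x_0,\xi_0)<1$'' is backwards; you would need $R_{n-1}(x_0,\xi_0)>0$, which is not automatic on $E_{\xi_0}$.)

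The paper's proof circumvents this by \emph{not} taking $(x_0,\xi_0)$ as the blocking pair. Instead, the bad sites (the set $T=\{x:f(x,\xi_0)+R_n(x,\xi_0)>1\}$, lying outside $\ball{\xi_0}{r_n(\xi_0)}$) serve as the \emph{witness} $x_1$ in the definition of desire: since $f>0$ on $T$, the center $\xi_0$ $f$-desires any strictly closer site $x'$ with $f(x',\xi_0)<1$. Such an $x'$ is found \emph{inside} $B=\ball{\xi_0}{r_n(\xi_0)}$ by a mass-comparison: from $\int_B(A_n-R_n)\,d\varphi=1\ge\int f(\cdot,\xi_0)\,d\mu$ and $\mu\ge\varphi$, one extracts a set $B'_{\xi_0}\subseteq B$ of positive $\varphi$-measure where $A_n-R_n>f$ (hence $f<1$). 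A Fubini flip then locates a single site $x'$ lying in $B'_{\xi}$ for a positive $\psi$-measure set of centers $\xi$, and a second argument (again using the inductive bound and $\nu\le\psi$) shows $x'$ $f$-desires one of those $\xi$. The double Fubini is essential to respect the a.e.\ nature of both the inductive hypothesis and Assumption~\ref{assump:uniqueness}; your single-pair approach cannot accommodate this. A similar Fubini structure is needed in part~(iv), where your direct argument that $x_0$ $f$-desires $\xi_0$ via (iii) can also stall when $\g{x_0}{f_s}{|x_0-\xi_0|}{\psi}=1$.
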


 Intuitively, it means that when there are less sites and more centers, the situation is better for sites and worse for centers.
The following corollary is immediately obtained from Theorem~\ref{thm:monotonicity}. Note that the inequalities that contain $f_c$ {are equivalent to the ones that contain $f_s$ as seen by} swapping the roles of the sites and the centers. 
\begin{corollary}[Optimality]
\label{cor:optimality}
Suppose $\varphi$ and $\psi$ satisfy Assumption~\ref{assump:uniqueness} and let $f$ be an arbitrary stable \verified{constrained} density for $\varphi$ and $\psi$. Let $f_s$ and $f_c$ be the site-optimal and the center-optimal densities for the same measures.
\begin{enumerate}[(i)]
\item
For $\varphi$-a.e. site $x$ we have
\[\g x{f_s}t{\psi} \geq \g xft{\psi} \geq \g x{f_c}t{\psi}, \quad\forall t\in[0,\infty].\]
\item
For $\psi$-a.e. center $\xi$ we have
\[\h {\xi}{f_c}t{\varphi} \geq \h {\xi}ft{\varphi} \geq \h {\xi}{f_s}t{\varphi}, \quad\forall t\in[0,\infty].\]
\end{enumerate}
\end{corollary}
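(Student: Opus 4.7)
The plan is a direct application of Theorem~\ref{thm:monotonicity} combined with a symmetry argument, since, as the text suggests, the result is an immediate consequence of the monotonicity theorem together with the definition of $f_c$ as the site-optimal density with the roles of $\varphi$ and $\psi$ swapped.

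First, I would apply Theorem~\ref{thm:monotonicity} with $(\mu,\nu):=(\varphi,\psi)$. The monotonicity conditions $\mu\geq\varphi$ and $\nu\leq\psi$ hold trivially with equality, and the required instance of Assumption~\ref{assump:uniqueness} for the pair $(\mu,\psi)=(\varphi,\psi)$ is provided by the hypothesis of the corollary. Parts~(iii) and~(iv) of the theorem, applied to the arbitrary stable constrained density $f$ for $(\varphi,\psi)$, then yield
\[
\g x{f}t{\psi} \leq \g x{f_s}t{\psi} \quad\text{for }\varphi\text{-a.e. }x,\ \forall t\in[0,\infty],
\]
\[
\h{\xi}{f}t{\varphi} \geq \h{\xi}{f_s}t{\varphi} \quad\text{for }\psi\text{-a.e. }\xi,\ \forall t\in[0,\infty].
\]
These are exactly the first inequality of (i) and the second inequality of (ii).

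For the two remaining inequalities, I would invoke the symmetry of the whole setup under interchanging the roles of sites and centers. Setting $\tilde f(\xi,x):=f(x,\xi)$, one checks directly from Definition~\ref{def:compatibleDensity} that $\tilde f$ is a constrained density for the swapped pair $(\psi,\varphi)$, and from Definition~\ref{def:stable} that the desire relation and therefore stability are phrased symmetrically in $x$ and $\xi$, so $\tilde f$ is still stable. Similarly, Assumption~\ref{assump:uniqueness} is itself symmetric in $\varphi$ and $\psi$, hence satisfied by $(\psi,\varphi)$. By definition of the center-optimal density, $f_c$ for $(\varphi,\psi)$ coincides with the site-optimal density for the swapped pair $(\psi,\varphi)$, and under this relabelling the functions $\g{\cdot}{\cdot}{\cdot}{\psi}$ and $\h{\cdot}{\cdot}{\cdot}{\varphi}$ exchange roles. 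Applying the preceding step to the swapped problem therefore gives
\[
\h{\xi}{f}t{\varphi} \leq \h{\xi}{f_c}t{\varphi} \quad\text{for }\psi\text{-a.e. }\xi,
\]
\[
\g x{f}t{\psi} \geq \g x{f_c}t{\psi} \quad\text{for }\varphi\text{-a.e. }x,
\]
which combined with the first step completes (i) and (ii).

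The only non-trivial item in this plan is the symmetry bookkeeping in the second step, namely verifying that constrained density, desire, stability, and Assumption~\ref{assump:uniqueness} are all genuinely invariant under the site/center swap; however, this is a routine unwinding of the relevant definitions and is the reason Theorem~\ref{thm:monotonicity} already encodes the full content of the corollary.
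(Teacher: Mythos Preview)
Your argument is correct and matches the paper's own approach: the paper states that the corollary is immediately obtained from Theorem~\ref{thm:monotonicity} applied with $(\mu,\nu)=(\varphi,\psi)$, and that the inequalities involving $f_c$ follow from those involving $f_s$ by swapping the roles of sites and centers, exactly as you do.
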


 In words, among all stable \verified{constrained} densities, the site-optimal density is the best for sites and the worst for centers. This justifies the names \myemph{site-optimal} and \myemph{center-optimal} for the Gale-Shapley algorithm.

As mentioned in~\cite{HoPe-Tail}, it seems difficult to express a simple condition in terms of $\varphi$ and $\psi$ that ensures uniqueness of stable \verified{constrained} densities. But Assumption~\ref{assump:uniqueness} is enough in the stationary case, as shown in Theorem~\ref{thm:uniqueness}. The key for proving the theorem is the following proposition. 

 \begin{proposition}
\label{prop:uniqueness}
With the assumptions of Corollary~\ref{cor:optimality}, if for $\varphi$-a.e. site $x$ we have
\[
\g x{f_s}t{\psi} = \g x{f_c}t{\psi},\quad \forall t\in[0,\infty],
\]
then there is a $(\varphi\otimes\psi)$-a.e. unique stable \verified{constrained} density for $\varphi$ and $\psi$.
\end{proposition}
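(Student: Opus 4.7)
The plan is to deduce that every stable constrained density $f$ for $(\varphi,\psi)$ must coincide $(\varphi\otimes\psi)$-a.e.\ with the site-optimal density $f_s$; uniqueness then follows at once, since $f_s$ is itself stable by Theorem~\ref{thm:stable}. Corollary~\ref{cor:optimality} already sandwiches $f$ between $f_s$ and $f_c$ in the cumulative sense, and the assumed equality $\g x{f_s}t{\psi}=\g x{f_c}t{\psi}$ collapses this sandwich into
\[
\g xft{\psi}=\g x{f_s}t{\psi},\qquad \forall t\in[0,\infty],
\]
for $\varphi$-a.e.\ $x$. The game is then to upgrade this equality of radial profiles to an almost-everywhere identification of the densities themselves.

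To do so, I would invoke Theorem~\ref{thm:monotonicity} with the trivial choice $(\mu,\nu)=(\varphi,\psi)$, which is legitimate because $(\varphi,\psi)$ satisfies Assumption~\ref{assump:uniqueness}. Part~(i) yields $f+R\le 1$ $(\varphi\otimes\psi)$-a.e., where $R$ is the rejection function of Algorithm~\ref{alg:Gale}. Combined with the structure of $A$ and $R$ from Definition~\ref{def:site-optimalDensity}---namely $A(x,\xi)=1$ for $|x-\xi|<a(x)$ and $A=R=0$ for $|x-\xi|>a(x)$---this gives
\[
f(x,\xi)\le f_s(x,\xi)\ \text{on}\ \{|x-\xi|<a(x)\},\qquad f_s(x,\xi)=0\ \text{on}\ \{|x-\xi|>a(x)\}.
\]
With these pointwise bounds, the proof splits $\mathbb R^d$ according to its position relative to $\ball x{a(x)}$ and uses the collapsed radial equality at three values of $t$. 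Subtracting the equalities at $t=a(x)$ and $t=\infty$ forces $\int_{|x-\xi|>a(x)}f(x,\xi)\psi(d\xi)=0$, so $f=f_s=0$ $\psi$-a.e.\ outside the closed ball. Taking the left limit $t\uparrow a(x)$ gives $\int_{\ballint x{a(x)}}(f_s-f)(x,\xi)\psi(d\xi)=0$, and the pointwise inequality on the open ball upgrades this to $f=f_s$ $\psi$-a.e.\ there.

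I expect the delicate point to be the critical sphere $\partial\ball x{a(x)}$: Theorem~\ref{thm:monotonicity}(ii) does not directly compare $f$ and $f_s$ on that sphere, and $\psi$ may legitimately carry an atom there. The way out is to exploit Assumption~\ref{assump:uniqueness}, which concentrates any $\psi$-mass on $\partial\ball x{a(x)}$ at a single point $s_\star$, together with the fact that $t\mapsto\g x{\cdot}{t}{\psi}$ is a right-continuous non-decreasing function whose jumps encode $\psi$-atoms on concentric spheres. Equality of $\g xft{\psi}$ and $\g x{f_s}t{\psi}$ at both $t=a(x)^-$ and $t=a(x)$ then forces equality of their jumps at $a(x)$, i.e.\ $f(x,s_\star)\psi(\{s_\star\})=f_s(x,s_\star)\psi(\{s_\star\})$, closing the remaining gap. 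Putting the three regions together, $f(x,\cdot)=f_s(x,\cdot)$ $\psi$-a.e.\ for $\varphi$-a.e.\ $x$, which is the desired $(\varphi\otimes\psi)$-a.e.\ identification and proves uniqueness.
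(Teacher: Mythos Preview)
Your proof is correct and follows essentially the same route as the paper: collapse the sandwich from Corollary~\ref{cor:optimality} via the hypothesis, use the pointwise inequality $f\le f_s$ on $\{|x-\xi|<a(x)\}$ (which the paper cites directly as Theorem~\ref{thm:monotonicity}\eqref{item:monotonicity2} while you re-derive it from part~\eqref{item:monotonicity1}), and then match the three regions---open ball, sphere, exterior---using the radial equality at $t\uparrow a(x)$, $t=a(x)$, and $t=\infty$, with Assumption~\ref{assump:uniqueness} handling the sphere. The arguments are essentially identical.
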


 Here, by $(\varphi\otimes\psi)$-a.e. unique, we mean that any two stable \verified{constrained} densities are identical except on a set of zero $(\varphi\otimes\psi)$-measure.

\begin{theorem}[Uniqueness]
\label{thm:uniqueness}
Let $\Phi$ and $\Psi$ be stationary random measures on $\mathbb R^d$ with positive finite intensities that satisfy Assumption~\ref{assump:uniqueness} almost surely.
{Almost surely,} any two stable \verified{constrained} densities for $\Phi_{\omega}$ and $\Psi_{\omega}$ agree on $(\Phi_{\omega}\otimes\Psi_{\omega})$-a.a. points.
\end{theorem}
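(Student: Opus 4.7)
The plan is to reduce the statement to Proposition~\ref{prop:uniqueness} applied realization-wise. Let $f_s$ and $f_c$ be the flow-adapted site-optimal and center-optimal densities obtained from Algorithm~\ref{alg:Gale} applied respectively to $(\Phi,\Psi)$ and $(\Psi,\Phi)$; by Theorem~\ref{thm:stable} and its symmetric counterpart, both are almost surely stable constrained densities for $(\Phi_\omega,\Psi_\omega)$. It suffices to show that almost surely, for $\Phi_\omega$-a.e.\ $x$,
\[
\g{x}{f_s}{t}{\Psi_\omega} \;=\; \g{x}{f_c}{t}{\Psi_\omega}, \qquad \forall\, t \in [0,\infty],
\]
since then Proposition~\ref{prop:uniqueness} (whose hypotheses on $(\varphi,\psi)$ hold a.s.\ by assumption) can be invoked $\omega$-wise to conclude.

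The first step is to apply Neveu's exchange formula~\eqref{eq:Neveu} with $H(\omega,s) := F_\omega(0,s)\,1_{|s|\leq t}$ for any flow-adapted stable constrained density $F$. Using flow-adaptedness, $F_{\theta_s\omega}(0,-s) = F_\omega(s,0)$, so the exchange formula simplifies to
\[
\lambda_\Phi \,\omidPalm{\Phi}{\g{0}{F}{t}{\Psi}} \;=\; \lambda_\Psi\,\omidPalm{\Psi}{\h{0}{F}{t}{\Phi}}.
\]
Applying this to $F=f_s$ and $F=f_c$ and subtracting, the left-hand difference is $\geq 0$ while the right-hand difference is $\leq 0$ by Corollary~\ref{cor:optimality}; hence both vanish. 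Combined with the pointwise bound $\g{0}{f_s}{t}{\Psi} \geq \g{0}{f_c}{t}{\Psi}$ holding $\mathbb P_\Phi$-a.s.\ (the Palm translation of Corollary~\ref{cor:optimality} obtained via definition~\eqref{eq:Palm}), equality of expectations forces the pointwise equality $\mathbb P_\Phi$-a.s., for each fixed $t$.

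To upgrade this to a statement uniform in $t$, I would use that $t \mapsto \g{0}{F}{t}{\Psi}$ is non-decreasing and right-continuous (by dominated convergence on the indicator $1_{|\xi|\leq t}$), so intersecting the almost-sure equalities over rational $t$ yields a single $\mathbb P_\Phi$-full event on which equality holds for every $t$. The refined Campbell theorem~\eqref{eq:Campbell}, together with the flow identity $\g{s}{F}{t}{\Psi}(\omega) = \g{0}{F}{t}{\Psi}(\theta_s\omega)$, transfers this to the desired statement: $\mathbb P$-a.s., for $\Phi_\omega$-a.e.\ site $s$, the equality holds for all $t$. Invoking Proposition~\ref{prop:uniqueness} on each such $\omega$ finishes the proof.

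The main obstacle is the quantifier management. The Neveu/Campbell machinery naturally produces a null event for each individual $t$, whereas Proposition~\ref{prop:uniqueness} requires a single null event outside of which the equality holds simultaneously for every $t$ and for $\Phi_\omega$-a.e.\ $s$. Collapsing the two layers of exceptional sets (the $t$-layer and the $s$-layer) into one is the delicate point, handled by right-continuity in $t$ and by a careful measurability check on the map $(t,s,\omega)\mapsto \g{s}{f_s}{t}{\Psi_\omega} - \g{s}{f_c}{t}{\Psi_\omega}$ so that Fubini/Tonelli can be applied legitimately.
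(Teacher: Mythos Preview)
Your proof is correct and follows essentially the same route as the paper: both use Lemma~\ref{lemma:doubleCounting} (i.e., Neveu's exchange formula) together with the optimality inequalities of Corollary~\ref{cor:optimality} to force $\omidPalm{\Phi}{\g 0{f_s}t{\Psi}-\g 0{f_c}t{\Psi}}=0$, then push this down to a $\mathbb P$-a.s., $\Phi_\omega$-a.e.\ statement via the Palm definition, handle the quantifier over $t$ by right-continuity on the rationals, and finish with Proposition~\ref{prop:uniqueness}. The only cosmetic difference is the order of operations---the paper transfers from Palm to $\mathbb P$ first and then does the rational-$t$ argument, while you do them in the reverse order---but both orderings are valid.
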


 Here, we mean that there is an event with probability one such that uniqueness holds on that event.

\verified{
 Given a stable \verified{constrained} density $f$, the \defstyle {territory} of a site $x$ is the  set $\{\xi\in\mathbb R^d: f(x,\xi)>0\}$. Similarly, the territory of a center $\xi$ is the set $\{x\in\mathbb R^d: f(x,\xi)>0\}$.
}


 \begin{theorem}[Boundedness]
\label{thm:bounded}
Let $\Phi$ and $\Psi$ be stationary random measures on $\mathbb R^d$ {which are almost surely non-zero and have} finite intensities. Let $F=F_{\omega}(x,\xi)$ be a flow-adapted stable \verified{constrained} density for $(\Phi,\Psi)$. Almost surely we have
\begin{enumerate}[(i)]
\item
\label{thm:bounded:1}
$\Phi_{\omega}$-a.a. sites and $\Psi_{\omega}$-a.a. centers have bounded territories.
\item
\label{thm:bounded:2}
The union of the territories of the sites (resp. centers) in a bounded set, has finite $\Psi$-measure (resp. finite $\Phi$-measure).
\end{enumerate}
\end{theorem}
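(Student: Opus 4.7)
I plan to establish (i) first and then deduce (ii) via a mass-transport argument.

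For (i), by the refined Campbell theorem~\eqref{eq:Campbell} it suffices to prove that under $\mathbb P_\Phi$ the territory radius $Z_0 := \sup\{\abs\xi : F(0,\xi) > 0\}$ is almost surely finite; the symmetric claim for centers under $\mathbb P_\Psi$ follows by swapping the roles of $\Phi$ and $\Psi$. I argue by contradiction, assuming $\mathbb P_\Phi(Z_0 = \infty) > 0$. On the event $\{Z_0 = \infty\}$, stability yields the following key structural property: for $\Psi$-a.e.\ center $\xi$ with $F(0,\xi) < 1$, the origin $F$-desires $\xi$ (the strictly farther center in the origin's territory required by Definition~\ref{def:stable} is supplied by $Z_0 = \infty$), so by stability $\xi$ cannot $F$-desire the origin. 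This forces $\xi$ to be $F$-sated with all of its applicants inside $\ball\xi{\abs\xi}$, and in particular inside the half-space $\{x : x\cdot\xi \ge 0\}$. Since $F \le 1$ and $\int F(0,\cdot)\,d\Psi \le 1$, the set $\{F(0,\cdot) = 1\}$ has $\Psi$-measure at most $1$, so the half-space property holds for $\Psi$-almost every center.

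A mass balance in $\ball 0r$ then gives $\Psi(\ball 0r) \le \Phi(\ball 0{2r}) + 1$ on the bad event, since all but $\Psi$-mass $1$ of centers in $\ball 0r$ are sated and receive their mass from applicant balls $\ball\xi{\abs\xi}\subseteq\ball 0{2r}$. To upgrade this to a contradiction I apply Neveu's exchange formula~\eqref{eq:Neveu} with the test function $H(\omega,s) = F_\omega(0,s)\indic{Z_{0,\omega}=\infty}$, obtaining
\[
\lambda_\Phi\,\omidPalm{\Phi}{\indic{Z_0=\infty}\,\textstyle\int F(0,s)\,\Psi(ds)} = \lambda_\Psi\,\omidPalm{\Psi}{\textstyle\int F(x,0)\indic{Z_x=\infty}\,\Phi(dx)}.
\]
On the right-hand side, stability applied with $\xi_0 = 0$ forces every $x$ with $Z_x=\infty$ and $F(x,0)\in(0,1)$ to make the origin $F$-sated with $x$ being its farthest applicant; combined with the half-space directional constraint and the strict positivity of the left-hand side on the assumed bad event, this contradicts the local finiteness of $\Phi$ and $\Psi$. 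The main obstacle is exactly this upgrade, since the coarse mass-balance estimate alone only yields $\bar\lambda_\Psi \le 2^d\bar\lambda_\Phi$ on the bad event; closing the gap requires the directional half-space refinement together with the Neveu transfer to the dual side.

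For (ii), I apply (i) to centers: $\tau^*(\xi) := \{x:F(x,\xi)>0\}$ is $\Psi$-a.s.\ bounded, with applicant radius $R^*(\xi) := \sup\{\abs{x-\xi} : x\in\tau^*(\xi)\}$. Since $\xi \in \bigcup_{x\in B}\tau(x)$ forces $\tau^*(\xi)\cap B \ne \emptyset$, one has $\mathrm{dist}(\xi,B) \le R^*(\xi)$. Applying the refined Campbell theorem to $\Psi$ with a flow-adapted translation of the indicator $\indic{\tau^*(\xi)\cap B \ne \emptyset}$ yields
\[
\omid{\Psi\bigl(\textstyle\bigcup_{x\in B}\tau(x)\bigr)} = \lambda_\Psi\,\omidPalm{\Psi}{\mathcal L_d\bigl(B \oplus \ball 0{R^*(0)}\bigr)}.
\]
Combining this identity with the global mass-transport bound $\int_B\int F(x,\xi)\,\Psi(d\xi)\,\Phi(dx) \le \Phi(B)$ and the a.s.\ finiteness of $R^*(0)$ under $\mathbb P_\Psi$ from part (i), a truncation on $\{R^*(0)\le N\}$ followed by monotone convergence shows that the right-hand side is finite, hence $\Psi\bigl(\bigcup_{x\in B}\tau(x)\bigr)$ is almost surely finite. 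The symmetric claim for the union of territories of centers in a bounded set follows by exchanging the roles of $\Phi$ and $\Psi$.
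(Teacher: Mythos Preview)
Your proposal has genuine gaps in both parts.

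\textbf{Part (i).} The Neveu identity you write down is correct, but both sides are automatically bounded (the left by $\lambda_\Phi$, the right by $\lambda_\Psi$, since $\int F(0,\cdot)\,d\Psi\le 1$ and $\int F(\cdot,0)\,d\Phi\le 1$), so neither finiteness nor positivity produces a contradiction. Your sentence ``combined with the half-space directional constraint \ldots\ this contradicts the local finiteness of $\Phi$ and $\Psi$'' is not an argument; you yourself flag the obstacle (``closing the gap requires \ldots'') but never close it. The mass-balance inequality $\Psi(\ball 0r)\le \Phi(\ball 0{2r})+1$ only yields a ratio bound on intensities, which is of course no contradiction. What is missing is a mechanism that forces the set of \emph{bad} points (those with unbounded territory) to itself be large in a stationary sense, and then to play bad points off against one another. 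The paper does exactly this: it restricts $\Psi$ to the set $\Xi'$ of bad centers, observes that $\Psi':=\Psi|_{\Xi'}$ is again stationary with positive intensity (using ergodic decomposition), and shows via a direct stability argument that the territory of any bad center $\xi_0$ is contained in the Voronoi territory of $\xi_0$ with respect to $\tfrac12\Psi'$; Proposition~\ref{prop:vterritories} then gives boundedness of that Voronoi territory, a contradiction. The point you are missing is the \emph{competition among bad centers themselves}: an unbounded territory for one center is harmless, but many bad centers cannot coexist stably.

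\textbf{Part (ii).} Your Campbell computation gives (with an inequality, not equality)
\[
\omid{\Psi\Bigl(\textstyle\bigcup_{x\in B}\tau(x)\Bigr)} \le \lambda_\Psi\,\omidPalm{\Psi}{\mathcal L_d\bigl(B \oplus \ball 0{R^*(0)}\bigr)},
\]
but $R^*(0)<\infty$ $\mathbb P_\Psi$-a.s.\ does \emph{not} make the right-hand side finite: you would need $\omidPalm{\Psi}{R^*(0)^d}<\infty$, which is not asserted anywhere (and is false in general---tail bounds for territory radii are delicate, cf.\ \cite{HoPe-Tail}). ``Truncation followed by monotone convergence'' just shows the truncated expectations increase to the full one, not that the limit is finite; the mass-transport bound $\int_B\int F\,d\Psi\,d\Phi\le\Phi(B)$ is irrelevant because it controls transported mass, not the $\Psi$-measure of the receiving set. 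Even if the expectation were infinite, the random variable could still be a.s.\ finite, but your argument does not address this either. The paper's proof of (ii) is completely different: it again sets up a competition, this time among bad \emph{lattice points} $z\in\mathbb Z^d$ (those for which $\bigcup_{x\in\ball z1}\tau(x)$ has infinite $\Psi$-measure), uses a geometric lemma (Lemma~\ref{lemma:JohnsonMehl}) to surround one bad lattice point by others, and then exhibits an explicit unstable pair $(x_i,\xi_i)$ by pigeonholing infinite $\Psi$-mass into finitely many regions. No moment bound on $R^*$ is needed.
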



\subsection{\verified{Voronoi Transport Kernel {Corresponding to a Measure}}}
\label{sec:voronoi}
\verified{
In this subsection, we generalize the notion of Voronoi tessellation and define it for a measure. We will use it only for proving Theorem~\ref{thm:bounded} here, but it can be interesting in its own.}

 \begin{definition}
 \label{def:vtk}
Let $\psi$ be a measure on $\mathbb R^d$ \verified{such that $\psi(\mathbb R^d)\geq 1$}. \verified{For $x_0\in\mathbb R^d$ define} 
\begin{eqnarray*}
s(x_0):=\sup\{s\in\mathbb R:\psi(\ball {x_0}s) \leq 1\}.
\end{eqnarray*}
\verified{For $x_0,\xi\in\mathbb R^d$ define}
\begin{eqnarray*}
v(x_0,\xi)&:=& \left\{
\begin{array}{ll}
1, & \dist {x_0}{\xi}<s(x_0),\\
c, & \dist {x_0}{\xi}=s(x_0),\\
0, & \dist {x_0}{\xi}>s(x_0),
\end{array}\right.
\end{eqnarray*}
\verified{
where $c$ is the constant in $[0,1]$ such that $\int v(x_0,\xi)\psi(d\xi)=1$. 
We let \verified{$c=1$} if $s(x_0)=\infty$ or $\psi(\partial \ball {x_0}{s(x_0)})=0$.}
The \defstyle{\vdensity} and the \defstyle{\vtk{}} with respect to $\psi$ are the function $v$ and the transport kernel $V(x,\cdot):= v(x,\cdot)\psi$ respectively. The \defstyle{\vterritory} of center $\xi$ with respect to $\psi$ is \verified{the set $\{x\in\mathbb R^d: v(x,\xi)>0\}$.} 
\end{definition}

 {Note that the \vdensity{} is {the same} as the function $A_1$ in Algorithm~\ref{alg:Gale}.} 

 \begin{remark}
 \verified{The condition $\psi(\mathbb R^d)\geq 1$ ensures that}
the \vtk{} is a non-weighted transport kernel. If $\psi$ is the counting measure on a discrete set $\Xi$, then
the \vterritories{} with respect to $\psi$ give the usual Voronoi tessellation of $\Xi$. Therefore, the notion of \vtk{} generalizes the notion of Voronoi tessellation.
\end{remark}

\verified{
\begin{remark}
	\label{rem:VoronoiIff}
 	 It is easy to see that $x$ is in the Voronoi territory of $\xi$ if and only if  $\psi(\ballint{x}{\dist{x}{\xi}}) \leq 1$ and if equality happens, then $\psi(\partial \ball{x}{\dist{x}{\xi}}) =0$.
\end{remark}
}
 \begin{lemma}
\label{lemma:star}
The \vterritory{} of $\xi$ is star-shaped with center $\xi$ but not necessarily convex \verified{or closed. Moreover, it is a closed polyhedral region in $\mathbb R^d$ provided that $\psi$ has a discrete support.} 
\end{lemma}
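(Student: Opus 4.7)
The plan is to split the lemma into three pieces: the star-shaped property (the main geometric content), the ``not necessarily convex or closed'' observations, and the closed polyhedral structure when $\psi$ has discrete support.

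For star-shapedness, I would fix a point $x$ in the Voronoi territory of $\xi$ and a point $y=(1-t)\xi+tx$ on the segment $[\xi,x]$; set $r_1=\dist{y}{\xi}$ and $r_2=\dist{x}{\xi}$, so $r_1\le r_2$ and $\dist{x}{y}=r_2-r_1$. By Remark~\ref{rem:VoronoiIff}, it suffices to show $\psi(\ballint{y}{r_1})\le 1$, with $\psi(\partial\ball{y}{r_1})=0$ in the equality case. A quick triangle inequality yields $\ballint{y}{r_1}\subseteq\ballint{x}{r_2}$, so the first bound follows from the hypothesis $\psi(\ballint{x}{r_2})\le 1$. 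For the equality case I would strengthen the inclusion to $\partial\ball{y}{r_1}\setminus\{\xi\}\subseteq\ballint{x}{r_2}$: if $z\in\partial\ball{y}{r_1}$ satisfies $\dist{z}{x}=r_2$, equality in the triangle inequality forces $z,y,x$ collinear with $y\in[z,x]$, and a short parametric computation identifies $z=\xi$ as the unique such point. The squeeze
\[
\psi(\ballint{y}{r_1})+\psi(\partial\ball{y}{r_1}\setminus\{\xi\})\le\psi(\ballint{x}{r_2})=1
\]
then forces $\psi(\partial\ball{y}{r_1}\setminus\{\xi\})=0$, and $\psi(\{\xi\})\le\psi(\partial\ball{x}{r_2})=0$ completes the argument.

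Non-closed territories arise for mixed measures; for example, taking $\psi$ to be $c\,\delta_0$ plus the Lebesgue measure restricted to $[1,2]$ in $\mathbb R$ (with $c\in(0,1)$), the territory of $\xi=0$ contains the sequence $x=1-\delta$ (the tie-breaking constant equals $2\delta/c>0$) but not $x=1$ itself (the constant collapses to $0$), hence is not closed. Analogous configurations with three or more atoms yield non-convex territories. For the polyhedral case $\psi=\sum_i c_i\delta_{\xi_i}$, the point is that $\xi_j\in\partial\ball{x}{\dist{x}{\xi_j}}$ with $c_j>0$ makes the equality case of Remark~\ref{rem:VoronoiIff} vacuous, so
\[
T(\xi_j)=\bigl\{x:f(x)<1\bigr\},\qquad f(x):=\sum_{i:\,\dist{\xi_i}{x}<\dist{\xi_j}{x}} c_i.
\]
Partitioning $\mathbb R^d$ by the locally finite arrangement of perpendicular-bisector hyperplanes $H_{i,j}$ makes $f$ constant on each open cell, and a direct continuity argument—each strict inequality $\dist{\xi_i}{x}<\dist{\xi_j}{x}$ defining a term of $f$ at $x_0$ persists in a neighborhood of $x_0$—shows $\{f\ge 1\}$ is open. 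Hence $T(\xi_j)$ is closed, and writing it as the union of closures of those cells on which $f<1$ exhibits it as a closed polyhedral region.

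The main obstacle, as I see it, is the equality case of the star-shaped argument: the weaker closed-ball inclusion $\ball{y}{r_1}\subseteq\ball{x}{r_2}$ does not directly control $\psi(\partial\ball{y}{r_1})$, and the decisive observation is that the two spheres $\partial\ball{y}{r_1}$ and $\partial\ball{x}{r_2}$ are internally tangent at the single point $\xi$, enabling the squeeze. The polyhedral part then amounts to careful tracking of tie-breaking on bisectors, and the ``not necessarily'' statements reduce to exhibiting configurations where the tie-breaking constant in Definition~\ref{def:vtk} discontinuously drops.
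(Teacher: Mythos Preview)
Your argument is correct and, for the star-shaped property, considerably more detailed than the paper's, which simply says that Remark~\ref{rem:VoronoiIff} ``easily implies'' star-shapedness. Your careful treatment of the equality case via the internal tangency of the two spheres at $\xi$ is exactly the content hidden in that word ``easily''.

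For the examples, the paper's choices differ from yours: it takes $\psi=\mathcal L+\delta_0$ for non-closedness (the territory of $0$ is $(-\tfrac12,\tfrac12)\setminus\{0\}$ on the positive side, missing the endpoint) and half the counting measure on the vertices of an equilateral triangle for non-convexity. Your non-closed example is also valid; your non-convexity remark is correct in spirit but you should exhibit one concrete configuration rather than leave it implicit.

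For the discrete case, your approach and the paper's diverge. You work directly with the sublevel set $\{f<1\}$ of the step function $f(x)=\sum_{i:\dist{\xi_i}{x}<\dist{\xi_j}{x}}c_i$; the paper instead studies the set $A$ of points $x$ for which $\partial\ball{x}{s(x)}$ contains at least two atoms, and shows $A$ is locally a finite union of hyperplane pieces by bounding $s(x)$ on compacts. Your route is more direct, but one point deserves care: the full bisector arrangement $\{H_{i,k}\}_{i\neq k}$ need \emph{not} be locally finite for a general discrete $\psi$. What saves your argument is that $f$ only changes across bisectors $H_{i,j}$ with $j$ fixed, and \emph{those} are locally finite, since $x\in H_{i,j}\cap K$ forces $\dist{x}{\xi_i}=\dist{x}{\xi_j}\le\sup_{y\in K}\dist{y}{\xi_j}$, confining $\xi_i$ to a bounded set. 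You should make this restriction to fixed $j$ explicit when you invoke local finiteness.
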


 \begin{proposition}
\label{prop:vterritories}
\verified{If every half-space has infinite $\psi$-measure, then all \vterritories{} with respect to $\psi$  are bounded. In particular,}
if $\Psi$ is a stationary random measure {which is almost surely non-zero,} then almost surely all \vterritories{} with respect to $\Psi$ are bounded.
\end{proposition}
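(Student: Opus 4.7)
The plan is to deduce the stationary assertion from the deterministic one. For the deterministic statement I would argue by contradiction: assume every open half-space has infinite $\psi$-measure, yet the \vterritory{} of some $\xi_0$ is unbounded. Pick a sequence $(x_n)$ in the territory with $r_n := \dist{x_n}{\xi_0} \to \infty$, so that Remark~\ref{rem:VoronoiIff} gives $\psi(\ballint{x_n}{r_n}) \leq 1$ for every $n$. After passing to a subsequence the unit vectors $u_n := (x_n - \xi_0)/r_n$ converge to some $u$, and expanding
\[
\abs{y - x_n}^2 = \abs{y - \xi_0}^2 - 2 r_n \inprod{y - \xi_0}{u_n} + r_n^2
\]
shows that every $y$ with $\inprod{y - \xi_0}{u} > 0$ lies in $\ballint{x_n}{r_n}$ for all sufficiently large $n$; Fatou's lemma then yields $\psi(\{y : \inprod{y - \xi_0}{u} > 0\}) \leq \liminf_n \psi(\ballint{x_n}{r_n}) \leq 1$, contradicting the assumption.

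For the stationary assertion it suffices, by the deterministic part, to show that almost surely every open half-space has infinite $\Psi$-measure. Since Lemma~\ref{lemma:infinite} only yields this for one fixed set at a time, I would work with a countable family of rational open \emph{cones} rather than of half-spaces. For $p \in \mathbb{Q}^d$, $u_0 \in \mathbb{Q}^d \setminus \{0\}$ and $\theta \in (0, \pi/2) \cap \mathbb{Q}$, set
\[
C_{p,u_0,\theta} := \{y : \inprod{y - p}{u_0} > \abs{y - p}\abs{u_0}\cos\theta\}.
\]
This cone contains balls of radius of order $t\sin\theta$ about the axis points $p + t u_0$, and hence arbitrarily large balls, so Lemma~\ref{lemma:infinite} gives $\Psi(C_{p,u_0,\theta}) = \infty$ almost surely; countability then produces a full-probability event $A$ on which this holds simultaneously for every such rational cone.

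The geometric key is that every open half-space $H^\circ = \{y : \inprod{y - p}{u} > 0\}$ contains some rational cone. Choose a rational $p'$ with $\inprod{p' - p}{u} > 0$, a rational $u_0 \in \mathbb{Q}^d \setminus \{0\}$ with $u_0/\abs{u_0}$ sufficiently close to $u$, and a rational $\theta$ with $\abs{u - u_0/\abs{u_0}} < \cos\theta$; then for any $y \in C_{p',u_0,\theta}$,
\[
\inprod{y - p}{u} \geq \abs{y - p'}\bigl(\cos\theta - \abs{u - u_0/\abs{u_0}}\bigr) + \inprod{p' - p}{u} > 0,
\]
so $C_{p',u_0,\theta} \subseteq H^\circ$. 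Consequently, on the event $A$ every open half-space has infinite $\Psi$-measure, and the deterministic part applied $\omega$-wise finishes the argument. The main obstacle is precisely this inclusion step: a countable family of rational \emph{half-spaces} does not work, because a half-space with a slightly tilted rational normal is never globally contained in a fixed half-space with an irrational normal (tilting at infinity), while the cones are robust under small perturbations of the axis direction.
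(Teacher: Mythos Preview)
Your proof is correct. The deterministic half differs from the paper's in one pleasant way: the paper invokes Lemma~\ref{lemma:star} (the \vterritory{} of $\xi_0$ is star-shaped with center $\xi_0$), so that unboundedness immediately yields an entire ray $\ell=\{\xi_0+tu:t\geq 0\}$ inside the territory; then $\bigcup_{x\in\ell}\ballint{x}{\dist{x}{\xi_0}}$ is exactly the open half-space $\{y:\inprod{y-\xi_0}{u}>0\}$, and Remark~\ref{rem:VoronoiIff} applied along $\ell$ gives it $\psi$-measure $\leq 1$. Your subsequence-plus-Fatou argument reaches the same half-space without appealing to the star-shape, at the cost of a compactness extraction; either way works, and yours is self-contained.

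For the stationary part the paper is terse --- it simply asserts that Lemma~\ref{lemma:infinite} yields ``every half-space has infinite $\Psi$-measure a.s.'' --- whereas you actually supply the missing uniformity argument. Your observation that a countable family of rational \emph{half-spaces} cannot do the job (a tilted rational half-space is never contained in a fixed irrational one) is exactly the right diagnosis, and the rational-cone family with the inclusion estimate
\[
\inprod{y-p}{u}\geq \abs{y-p'}\bigl(\cos\theta-\abs{u-u_0/\abs{u_0}}\bigr)+\inprod{p'-p}{u}>0
\]
is a clean fix. So on this half your write-up is in fact more complete than the paper's.
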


 \subsection{\verified{On Allocations}}
\label{sec:allocations}
\verified{
As explained in the introduction, a problem of special interest is constructing flow-adapted balancing allocations.
But since a \verified{$\psi$-constrained} {weighted} transport kernel $T$ satisfies $T\leq \psi$ (as in Definition~\ref{def:mildTransport}), a balancing \verified{$\psi$-constrained} transport kernel cannot be an allocation except maybe when $\psi$ is a discrete measure. The reason is, if $B$ is a set of centers such that $0<\psi(B)<1$ (which exists in the non-discrete case), then on a set of sites with positive measure (under $\varphi$) we have $T(x,B)>0$. On the other hand, $T(x,B)<1$ and so $T(x,\cdot)$ is not a Dirac measure. However, \verified{we will see that} under some conditions in the case when $\psi$ is a counting measure, the site-optimal density is guarantied to \verified{give} an allocation.  

Motivating from\verified{~\cite{HoPe06} and} Definition~\ref{def:stable}, we define

\verified{
\begin{definition}
	\label{def:allocation}
Given $\varphi,\psi\in M$,
an \defstyle{allocation} is a measurable function $\tau:D\rightarrow \mathbb R^d\cup\{\infty\}$, where $D\subseteq\mathbb R^d$ is a measurable set such that $\varphi(D^c)=0$ and $\infty$ is treated as a single point added to the space. This can be regarded a weighted transport kernel that transports the mass at $s$ to a single point $\tau(s)$; i.e. define $T(s,\cdot)$ to be $\delta_{\tau(s)}$ whenever $s\not\in D \cup \tau^{-1}(\infty)$ and zero otherwise.  This allocation is {$(\varphi,\psi)$-balancing} if $\tau\neq \infty, \varphi$-a.e. and $\varphi(\tau^{-1}(\cdot))=\psi(\cdot)$.
\end{definition}
}

\begin{definition}
\label{def:stableAllocation}
Given measures $\varphi$ and $\psi$ on $\mathbb R^d$, let $\tau$ be an allocation defined on a domain $D\subseteq \text{supp}(\varphi)$ \verified{with $\varphi(D^c)=0$}. We say $\tau$ is \defstyle{sub-balancing} when $\varphi(\tau^{-1}(B))\leq \psi(B)$ for all $B\in\mathcal G$; i.e. $\tau_*(\varphi|_D)\leq\psi$. Given a sub-balancing allocation $\tau$, a site \verified{$x_0\in D$} is \defstyle{exhausted} when \verified{$\tau(x_0)\neq\infty$ and \defstyle{unexhausted} when $\tau(x_0)=\infty$.} A center $\xi_0$ is \defstyle{sated} when it is not in the support of the measure $\psi-\tau_*(\varphi|_D)$. we say a site $x_0$ \defstyle{desires} a center $\xi_0$ when either $x_0$ is unexhausted or  $\dist{x_0}{\tau(x_0)}<\dist{x_0}{\xi_0}$. We say $\xi_0$ \defstyle{desires} $x_0$ if $\tau(x_0)\neq \xi_0$ and either $\xi_0$ is unsated or there is a sites $x_1$ s.th. $ \tau(x_1)=\xi_0$ and $\dist{x_1}{\xi_0}>\dist{x_0}{\xi_0}$.  We say that a sub-balancing allocation is \defstyle{stable} if 
there is no pair $(x_0,\xi_0)$ such that both desire each other.
\end{definition}


A similar notion of one-sided stable allocations in dimension one is also defined in~\cite{LaMoTh14}. The authors construct a balancing stable allocation (in that sense) between two {ergodic} stationary and mutually singular diffuse (i.e. without atom) random measures on $\mathbb R$.

\begin{proposition}
\label{prop:01}
In the following cases the site-optimal density is $\{0,1\}$-valued on $(\varphi\otimes\psi)$-a.e. points.
\begin{enumerate}[(i)]
\item
\label{prop:01-1}
when both $\varphi$ and $\psi$ are absolutely continuous w.r.t. $\mathcal L$,
\item
\label{prop:01-2}
when one of $\varphi$ and $\psi$ is absolutely continuous w.r.t. $\mathcal L$ and the other is a counting measure,
\item
\label{prop:01-3}
when both $\varphi$ and $\psi$ are counting measures and Assumption~\ref{assump:uniqueness} holds.
\end{enumerate}
\end{proposition}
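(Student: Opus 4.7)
The plan is to show, by induction on the stage $n$, that $A_n(x,\xi)$ and $R_n(x,\xi)$ take only the values $0$ or $1$ on $(\varphi\otimes\psi)$-a.e.\ $(x,\xi)$. Since $A_n$ and $R_n$ are non-decreasing in $n$ with $0\le R_n\le A_n\le 1$, the limits $A$ and $R$ inherit the $\{0,1\}$-valued property a.e., and hence so does $f_s=A-R$. A preliminary observation is that Assumption~\ref{assump:uniqueness} holds in all three cases: in (i) and (ii) by Proposition~\ref{prop:assumption}, and in (iii) by hypothesis. This will be the standing assumption throughout.

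The base case $R_0\equiv 0$ is trivial. For the inductive step, the only $(x_0,\xi)$ at which $A_n(x_0,\xi)$ can fall in $(0,1)$ are those with $\xi\in\partial\ball{x_0}{a_n(x_0)}$, coming from the constant $c=c_n(x_0)$. When $\psi$ is absolutely continuous (case (i) and one sub-case of (ii)), this sphere is $\psi$-null, so Fubini yields $(\varphi\otimes\psi)(\{A_n\in(0,1)\})=0$. When $\psi$ is a counting measure (case (iii) and the other sub-case of (ii)), Assumption~\ref{assump:uniqueness} gives at most one atom $\xi^*$ on the sphere for $\varphi$-a.e.\ $x_0$, and the inductive hypothesis makes the integrand $(1-R_{n-1}(x_0,\cdot))$ $\{0,1\}$-valued on atoms of $\psi$. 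Consequently the map $a\mapsto\int_{\ball{x_0}a}(1-R_{n-1}(x_0,\xi))\psi(d\xi)$ is an integer-valued right-continuous step function; the supremum defining $a_n(x_0)$ then forces the jump at $a=a_n(x_0)$ to be exactly $+1$, $R_{n-1}(x_0,\xi^*)=0$, and the integral just below $a_n(x_0)$ to equal $1$. Substituting into the normalization~\eqref{eq:c_n} yields $c_n(x_0)=1$, whence $A_n(x_0,\xi^*)=0\in\{0,1\}$.

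The argument for $R_n$ is symmetric: non-trivial values can arise only on $\partial\ball{\xi_0}{r_n(\xi_0)}$, which is either $\varphi$-null (when $\varphi$ is AC) or carries at most one atom (when $\varphi$ is a counting measure, again by Assumption~\ref{assump:uniqueness}). In the counting case one repeats the step-function analysis for~\eqref{eq:r_n}, now using the just-proved $\{0,1\}$-valuedness of $A_n$, to conclude $c'_n(\xi_0)\in\{0,1\}$ and hence $R_n\in\{0,1\}$ on the sphere. Taking the countable union over $n$ and passing to the limit closes the induction. The main technical hurdle is the step-function bookkeeping in the counting case: one must use the at-most-one-atom conclusion of Assumption~\ref{assump:uniqueness} together with the definition of $a_n(x_0)$ (resp.\ $r_n(\xi_0)$) as a supremum to show that the open-ball integral just below $a_n(x_0)$ equals exactly $1$, rather than merely $\le 1$. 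Once this is in place, the rest is routine.
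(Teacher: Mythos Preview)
Your proposal is correct and follows exactly the approach the paper indicates: the paper explicitly skips the proof, stating only that ``it is easy to use induction to show that the functions in Algorithm~\ref{alg:Gale} are $\{0,1\}$-valued a.e.'' Your induction on $n$ carries this out, and the step-function bookkeeping you describe in the counting case (using Assumption~\ref{assump:uniqueness} to bound jumps by $1$ and the supremum definition of $a_n$, $r_n$ to force the open-ball integral to equal exactly $1$) is the right way to pin down $c_n(x_0)=1$ and $c'_n(\xi_0)=1$.
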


We skip the proof of Proposition~\ref{prop:01}. For a proof, it is easy to use induction to show that the functions in Algorithm~\ref{alg:Gale} are $\{0,1\}$-valued a.e.

\begin{remark}
\label{rem:01}
In the case that each one of $\varphi$ and $\psi$ is either a diffuse (i.e. without atom) measure or a counting measure (not necessarily satisfying Assumption~\ref{assump:uniqueness}), Algorithm~\ref{alg:Gale} can be slightly changed to find a $\{0,1\}$-valued {stable {constrained} density}. It is enough to re-define the functions $A_n(x_0,\cdot)$ and $R_n(\cdot,\xi_0)$ on $\partial \ball{x_0}{a_n(x_0)}$ and $\partial\ball{\xi_0}{r_n(\xi_0)}$ for example using the lexicographic order on the boundary of the balls (see Remark~\ref{rem:choice}).
\end{remark}

\begin{proposition}
A \verified{constrained} density which is $\{0,1\}$-valued on $(\varphi\otimes\psi)$-a.e. points gives an allocation provided that $\psi$ is a counting measure. Therefore, by Remark~\ref{rem:01}, for the existence {(and construction)} of stable allocations it is enough that
\begin{enumerate}[(i)]
\item
$\varphi$ is diffuse and $\psi$ is a counting measure, 
\item
or both $\varphi$ and $\psi$ are counting measures.
\end{enumerate}
\end{proposition}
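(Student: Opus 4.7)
The plan for the first sentence is to read off, for $\varphi$-a.e.\ site $x$, the unique center to which $f$ sends unit mass, thereby producing the desired allocation $\tau$. Since $\psi$ is a counting measure on a locally finite (hence countable) set $\Xi$, the sub-balancing inequality $\int f(x,\xi)\psi(d\xi)\leq 1$ reduces to $\sum_{\xi\in\Xi} f(x,\xi)\leq 1$. First I would apply Fubini to the $(\varphi\otimes\psi)$-null exceptional set where $\{0,1\}$-valuedness fails in order to conclude that, for $\varphi$-a.e.\ $x$, $f(x,\cdot)$ is $\{0,1\}$-valued on all of $\Xi$; together with the above bound this forces at most one $\xi\in\Xi$ with $f(x,\xi)=1$.

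Having this, I would enumerate $\Xi=\{\xi_1,\xi_2,\ldots\}$ and define $\tau(x):=\xi_i$ for the least $i$ with $f(x,\xi_i)=1$, and $\tau(x):=\infty$ otherwise. Measurability of $\tau$ is immediate from countability of $\Xi$ and measurability of each level set $\{x:f(x,\xi_i)=1\}$. To confirm that $\tau$ is the allocation inducing the same weighted transport kernel as $f$ via~\eqref{eq:kernel}, I would compute
\[
T(x,B)=\int_B f(x,\xi)\psi(d\xi)=\sum_{\xi\in B\cap\Xi}f(x,\xi)=\indic{\tau(x)\in B}
\]
for every Borel set $B$ and $\varphi$-a.e.\ $x$, which equals $\delta_{\tau(x)}(B)$ when $\tau(x)\neq\infty$ and zero otherwise, matching Definition~\ref{def:allocation}.

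For the concluding ``Therefore'' sentence, I would invoke Remark~\ref{rem:01} which, in each of the two listed regimes, produces a $\{0,1\}$-valued stable constrained density; the first part then converts it into an allocation $\tau$. It remains to verify that $\tau$ is stable in the sense of Definition~\ref{def:stableAllocation}, which I would do by contrapositive: if a pair $(x_0,\xi_0)$ mutually desire each other in the allocation sense, then translating each condition into $f$-language---``$x_0$ unexhausted'' becomes $f(x_0,\cdot)\equiv 0$, while $\dist{x_0}{\xi_0}<\dist{x_0}{\tau(x_0)}$ supplies the witness $\xi_1:=\tau(x_0)$ with $f(x_0,\xi_1)=1$, and symmetrically for $\xi_0$ using some $x_1$ with $\tau(x_1)=\xi_0$---shows that $(x_0,\xi_0)$ mutually $f$-desire each other, contradicting stability of $f$. (The equivalence between $\xi_0$ being unsated as an allocation center and being $f$-unsated uses that $\psi(\{\xi_0\})=1$ and that $\int f(x,\xi_0)\varphi(dx)=\varphi(\tau^{-1}(\xi_0))$ for a $\{0,1\}$-valued density.) There is no deep obstacle here; the only care needed is bookkeeping of the $\varphi$-null exceptional sets, which is controlled by the countability of $\Xi$.
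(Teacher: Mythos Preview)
The paper does not supply a proof for this proposition; it only adds the clarifying sentence ``Here, by `\textit{gives an allocation}' we mean that the weighted transport kernel given by Remark~\ref{rem:kernel} coincides with an allocation on almost all sites,'' and otherwise treats the statement as immediate from the definitions.

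Your argument is a correct and natural unpacking of what the paper leaves implicit. The Fubini step reducing the $(\varphi\otimes\psi)$-a.e.\ hypothesis to a $\varphi$-a.e.\ statement about $f(x,\cdot)$ on all of $\Xi$ (using that $\psi$ is counting, so $\psi$-null means empty) is the right move, and the identification $T(x,\cdot)=\delta_{\tau(x)}$ is exactly the intended content of ``gives an allocation''. For the stability transfer you correctly translate each allocation-desire clause into the corresponding $f$-desire clause; note incidentally that Definition~\ref{def:stableAllocation} in the paper has a typo in the inequality (it should read $\dist{x_0}{\tau(x_0)}>\dist{x_0}{\xi_0}$ for $x_0$ to desire $\xi_0$, as is clear from comparison with Definition~\ref{def:stable}), and you implicitly use the corrected version. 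The only detail you leave tacit is that $\tau$ is sub-balancing in the sense of Definition~\ref{def:stableAllocation}, but this follows at once from $\varphi(\tau^{-1}(\{\xi\}))=\int f(x,\xi)\varphi(dx)\leq 1$ for each $\xi\in\Xi$.
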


Here, by \textit{`gives an allocation'} we mean that the \verified{weighted} transport kernel given by Remark~\ref{rem:kernel} coincides with an allocation on almost all sites.


\begin{remark}
When $\psi$ is {a measure with discrete support } $\psi=\sum_{i=1}^{\infty}w_i\delta_{\xi_i}$ and $\varphi$ is diffuse, we can slightly change 
Algorithm~\ref{alg:Gale} to obtain a {stable allocation}.
 \verified{We limit ourselves to $\bar{\psi}$-constrained transport kernels,}
where $\bar{\psi}=\sum_{i=1}^{\infty}\delta_{\xi_i}$ is the counting measure with the same support as $\psi$. Equivalently, $f(x,\xi_i)\leq \frac 1{w_i}$ for defining \verified{constrained} densities. We then change the definition of $A_n$ and $a_n$ in Algorithm~\ref{alg:Gale} in a similar manner. 
We similarly get the following proposition.
\end{remark}

\begin{proposition}
Suppose $\Phi$ is a diffuse random measure and $\Psi$ is {a random measure whose support is a.s. discrete} in the setting of Theorem~\ref{thm:site-optimalBalancing}. One can construct an allocation which is almost surely stable and $(\Phi,\Psi)$-balancing.
\end{proposition}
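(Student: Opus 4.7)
The plan is to combine the modified Gale-Shapley construction sketched in Remark~\ref{rem:01} with the balancing and stability results developed earlier in this subsection. Write $\Psi_\omega = \sum_i w_i \delta_{\xi_i}$ where $w_i := \Psi_\omega(\{\xi_i\})$, and let $\bar\Psi_\omega := \sum_i \delta_{\xi_i}$ be the counting measure on the (almost surely discrete) support of $\Psi$. I restrict attention to $\bar\Psi$-constrained densities, i.e., those satisfying $f(x,\xi_i)\le 1/w_i$, and run the modified algorithm where~\eqref{eq:a_n} is replaced by
$$a_n(x_0):=\sup\Bigl\{a:\int_{\ball{x_0}a}\bigl(1/w_i-R_{n-1}(x_0,\xi)\bigr)\bar\Psi(d\xi)\le 1\Bigr\}$$
and $A_n$ is redefined to take value $1/w_i$ on the interior ball and $0$ outside, with a boundary adjustment on $\partial\ball{x_0}{a_n(x_0)}$. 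When several atoms of $\Psi$ lie on this sphere, I break ties by a fixed measurable order on the countable support, producing flow-adapted iterates; this is the refinement indicated in Remark~\ref{rem:choice} and Remark~\ref{rem:01}.

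With this modification in place, Lemma~\ref{lemma:increasing}, Theorem~\ref{thm:stable}, and Theorem~\ref{thm:site-optimalBalancing} go through unchanged, since their proofs use only monotonicity of the iterates together with a Neveu-type spatial-averaging identity, both insensitive to replacing the constant upper bound $1$ by $1/w_i$. Hence the modified site-optimal density $f_s$ is almost surely a stable $\bar\Psi$-constrained density, and the equal-intensity hypothesis yields $\int f_s(x,\xi)\,\Psi(d\xi)=1$ for $\Phi$-a.e.\ $x$ and $\int f_s(x,\xi_i)\,\Phi(dx)=1$ for every atom $\xi_i$, almost surely. Next, I verify that $f_s\in\{0,1/w_i\}$ on $(\Phi\otimes\Psi)$-a.a.\ pairs. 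Diffuseness of $\Phi$ gives $\Phi(\partial\ball{\xi_0}{r})=0$ for every $\xi_0$ and $r$ almost surely, so the adjustment constant $c'$ in the rejection step is irrelevant and $R_n(\cdot,\xi_0)\in\{0,A_n(\cdot,\xi_0)\}$. The $\{0,1/w_i\}$-valuedness of $A_n$ is built in through the lex tie-breaking, and induction together with the monotone limits of Definition~\ref{def:site-optimalDensity} propagates the range to $f_s$.

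To finish, I extract an allocation. By the preceding paragraph, $\sum_i w_i f_s(x,\xi_i)=1$ for $\Phi$-a.e.\ $x$ while each summand $w_i f_s(x,\xi_i)$ lies in $\{0,1\}$, so exactly one atom $\xi_i$ of $\Psi$ satisfies $f_s(x,\xi_i)=1/w_i$; set $\tau(x):=\xi_i$ for that atom. The map $\tau$ is measurable and flow-adapted, the induced transport kernel via~\eqref{eq:kernel} is $T(x,\cdot)=\delta_{\tau(x)}$, and the sated condition $\int f_s(x,\xi_i)\,\Phi(dx)=1$ rewrites as $\Phi_\omega(\tau^{-1}(\{\xi_i\}))=w_i=\Psi_\omega(\{\xi_i\})$, so $\tau$ is $(\Phi,\Psi)$-balancing. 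Stability of $f_s$ in the sense of Definition~\ref{def:stable} immediately gives stability of $\tau$ in the sense of Definition~\ref{def:stableAllocation}, since any pair that mutually desires itself in the allocation sense also mutually desires itself as witnessed by $f_s$, contradicting stability.

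The main obstacle I anticipate is the bookkeeping in the $\{0,1/w_i\}$-induction: two sources of fractional values must be ruled out almost surely, namely mass on spheres centered at the atoms of $\Psi$ (controlled by the diffuseness of $\Phi$) and mass on spheres centered at sites containing several atoms of $\Psi$ (controlled by the deterministic tie-breaking on the countable support). Rerunning the proof of Theorem~\ref{thm:site-optimalBalancing} with the rescaled constraint is routine once one observes that its spatial-average computations interact with the bound only through the integrated quantity $\sum_i w_i f_s(\cdot,\xi_i)\le 1$, which matches the original setting.
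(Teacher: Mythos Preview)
Your approach is the one the paper has in mind: the proposition is stated immediately after the remark describing the $\bar\psi$-constrained modification, with no separate proof given, so fleshing out that remark is exactly what is expected. Two points need correction, however.

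First, the displayed formula for $a_n(x_0)$ is off. The quantity that must stay below $1$ is the outgoing mass $\int_{\ball{x_0}a}(A_n-R_{n-1})(x_0,\xi)\,\Psi(d\xi)$, so the supremum should read
\[
a_n(x_0)=\sup\Bigl\{a:\int_{\ball{x_0}a}\bigl(\tfrac{1}{w(\xi)}-R_{n-1}(x_0,\xi)\bigr)\,\Psi(d\xi)\le 1\Bigr\},
\]
i.e.\ integration against $\Psi$, not $\bar\Psi$. (Equivalently one may rescale $\tilde f:=w\cdot f$ and work with $\bar\Psi$, but then the upper bound in the integrand is $1$, not $1/w_i$; your version mixes the two pictures.)

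Second, and more substantively, the assertion ``diffuseness of $\Phi$ gives $\Phi(\partial\ball{\xi_0}{r})=0$ for every $\xi_0$ and $r$ almost surely'' is false. Diffuse means atomless; it does not force spheres to be null. A stationary counterexample is $\Phi_\omega=\sum_{z\in\mathbb Z^d}\sigma_{z+U}$ with $U$ uniform in $[0,1]^d$ and $\sigma_y$ the surface measure on $\partial\ball{y}{1/4}$; this is diffuse, ergodic, of finite intensity, yet every sphere $\partial\ball{z+U}{1/4}$ has positive $\Phi$-mass. If $\Psi$ is built so that its atoms sit at the centres $z+U$, the rejection boundary $\partial\ball{\xi_0}{r_n(\xi_0)}$ can carry positive $\Phi$-mass and the constant $c'$ is \emph{not} irrelevant. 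Consequently your induction for $\{0,1/w_i\}$-valuedness breaks at the very first rejection step, and the allocation extraction in your final paragraph is not justified.

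The fix is the one the paper already indicates in Remark~\ref{rem:01}: use a translation-invariant tie-breaking rule (e.g.\ lexicographic order on $\partial\ball{\xi_0}{r_n(\xi_0)}$) in the rejection step as well, not only in the application step. Lexicographic order is preserved by translations, so flow-adaptedness is retained; with tie-breaking in \emph{both} steps, the $\{0,1/w_i\}$-valuedness goes through by the induction you describe, and the rest of your argument is fine.
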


{We finish this section} with two open problems.

\begin{openproblem}
\label{conj:stable}
Does there exist  a stable allocation for any two measures $\varphi$ and $\psi$ on $\mathbb R^d$ provided that $\varphi$ is absolutely continuous with respect to the Lebesgue measure?
\end{openproblem}

\new{
If a stable allocation exists and can be chosen as a measurable function of $(\varphi,\psi)$, then it leads to a solution of the following question in a way similar to theorems~\ref{thm:site-optimalBalancing} and~\ref{thm:extraHead}.
 }

\begin{openproblem}
\label{conj:non-randomized}
Does
there exists a non-randomized {extra head scheme} for any ergodic random measure on $\mathbb R^d$ with a positive and finite intensity?  
\end{openproblem}

}


 \section{Proofs}
\label{sec:proofs}

\begin{proof}[Proof of Lemma~\ref{lemma:infinite}]
	\new{
		Assume $\mathbb P[\Phi(V)<\infty]=2\delta>0$ an let $\epsilon>0$ be arbitrary. This implies that there exists $R>0$ such that $\mathbb P[\Phi(V\backslash B_R(0))<\epsilon]>\delta$. For a given $r>0$, let $\tilde{B_r}\subseteq V\backslash B_R(0)$ be a ball with radius $r$, which exists by the assumption on $V$. By stationarity we get
		$$\mathbb P[\Phi( B_r(0))<\epsilon]=\mathbb P[\Phi(\tilde{B_r})<\epsilon]\geq\mathbb P[\Phi(V\backslash B_R(0))<\epsilon]>\delta.$$
		By letting $r\rightarrow\infty$ we get $\mathbb P[\Phi(\mathbb R^d)\leq\epsilon]\geq\delta$. Since $\epsilon$ is arbitrary, this gives $\mathbb P[\Phi(\mathbb R^d)=0]\geq\delta$, a contradiction.
	}
	
\end{proof}

 \begin{remark}
\label{rem:GaleShapley}
By the definition of $A_n$ we have
\[
\int_{\myXi} \left(A_n(x_0,\xi)-R_{n-1}(x_0,\xi)\right)\psi(d\xi)\leq 1
\]
and if $a_n(x_0)<\infty$, equality holds. Moreover, if $\psi$ has infinite total mass, then one can prove by induction that $a_n(\cdot)<\infty$ for all $n$. Similarly, by the definition of $R_{n}$ we have
\[
\int_{\myX} \left(A_n(x,\xi_0)-R_{n}(x,\xi_0)\right)\varphi(dx)\leq 1
\]
and if $r_n(\xi_0)<\infty$, equality holds. Moreover, if equality holds 
at some stage $n$, then it holds at all later stages; i.e. $\xi_0$ is sated at stage $n$ afterwards.
\end{remark}

\begin{proof}[Proof of Lemma~\ref{lemma:increasing}]
It is clear that $R_1\geq R_0$. For $n\geq 2$, assume $R_{n-1}\geq R_{n-2}$. We will conclude $a_n\geq a_{n-1}$, $A_n\geq A_{n-1}$, $r_n\leq r_{n-1}$ and $R_{n}\geq R_{n-1}$, which proves the claim by induction. \verified{By the same induction, it is easily proved that the functions depend on $\varphi$ and $\psi$ in a measurable and flow-adapted manner.}

 Since $1-R_{n-1}\leq1-R_{n-2}$,~\eqref{eq:a_n} gives $a_n(x_0)\geq a_{n-1}(x_0)$.
If we have $a_n(x_0)>a_{n-1}(x_0)$ or $a_n(x_0)=\infty$, then it is clear from the definition of $A_n$ that $A_n(x_0,\cdot)\geq A_{n-1}(x_0,\cdot)$. Now, suppose $a_n(x_0)=a_{n-1}(x_0)=:t<\infty$.
By~\eqref{eq:a_n} \verified{and~\eqref{eq:c_n}} we get
\[
c_{n}(x_0)=1-\frac{1-\int_{B^{\circ}}(1-R_{n-1}(x_0,\xi))\psi(d\xi)}{\int_{\partial B} (1-R_{n-1}(x_0,\xi))\psi(d\xi)},
\]
where $B=\ball{x_0}t$ and \verified{$\frac 00=0$} by convention. This equation and the fact that $R_{n-1}\geq R_{n-2}$ implies $c_n(x_0)\leq c_{n-1}(x_0)$ and hence $A_n(x_0,\cdot)\geq A_{n-1}(x_0,\cdot)$.
Given $A_n\geq A_{n-1}$, \eqref{eq:r_n} implies $r_n(\xi_0)\leq r_{n-1}(\xi_0)$. The proof of the fact that $r_n\leq r_{n-1}$ implies $R_{n}\geq R_{n-1}$ is completely similar to the above proof. 

 \end{proof}

\begin{lemma}
\label{lemma:unexhaustedApply}
In the site-optimal density, if a site $x_0$ desires a center $\xi_0$, then $A_n(x_0,\xi_0)=1$ for sufficiently large $n$. Similarly, if $\xi_0$ desires $x_0$ then $R_n(x_0,\xi_0)=0$ for all $n$.

 \end{lemma}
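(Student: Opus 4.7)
My plan is to split the proof into the two statements, each treating the two disjuncts of Definition~\ref{def:stable} as subcases. The tools I will combine are: monotonicity of $A_n,R_n,a_n,r_n$ in $n$ from Lemma~\ref{lemma:increasing}; the saturation identities of Remark~\ref{rem:GaleShapley}; and dominated convergence against $\psi$ (resp.\ $\varphi$) on a ball of finite measure.

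\textbf{First statement.} Assume $x_0$ $f_s$-desires $\xi_0$. In the subcase where there is a witness $\xi_1\in\myXi$ with $\dist{x_0}{\xi_1}>\dist{x_0}{\xi_0}$ and $f_s(x_0,\xi_1)>0$, I will argue that $A(x_0,\xi_1)\geq f_s(x_0,\xi_1)>0$, so $A_n(x_0,\xi_1)>0$ for some $n$; by the definition of $A_n$ this forces $\dist{x_0}{\xi_1}\leq a_n(x_0)$, whence $\dist{x_0}{\xi_0}<a_n(x_0)$ and $A_n(x_0,\xi_0)=1$. In the other subcase, $x_0$ is unexhausted, i.e.\ $\int f_s(x_0,\xi)\psi(d\xi)<1$, and I argue by contradiction: if $A_n(x_0,\xi_0)<1$ for every $n$, then $a_n(x_0)\not\to\infty$ (else eventually $\dist{x_0}{\xi_0}<a_n(x_0)$, so $A_n(x_0,\xi_0)=1$), so $a(x_0):=\lim_n a_n(x_0)<\infty$ and each $a_n(x_0)$ is finite. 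Remark~\ref{rem:GaleShapley} then gives $\int\bigl(A_n(x_0,\xi)-R_{n-1}(x_0,\xi)\bigr)\psi(d\xi)=1$ for every $n$. The integrand is nonnegative, bounded by $1$, and vanishes outside $\ball{x_0}{a(x_0)}$, which has finite $\psi$-measure; dominated convergence together with $A_n\nearrow A$ and $R_{n-1}\nearrow R$ yields $\int(A-R)(x_0,\xi)\psi(d\xi)=1$, contradicting unexhaustedness.

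\textbf{Second statement.} Assume $\xi_0$ $f_s$-desires $x_0$. In the subcase with a witness $x_1\in\myX$ satisfying $\dist{x_1}{\xi_0}>\dist{x_0}{\xi_0}$ and $f_s(x_1,\xi_0)>0$, one has $A(x_1,\xi_0)>R(x_1,\xi_0)$. I will observe that if $r_n(\xi_0)<\dist{x_1}{\xi_0}$ held for some $n$, then using $r_m\leq r_n$ for $m\geq n$ (Lemma~\ref{lemma:increasing}) we would get $R_m(x_1,\xi_0)=A_m(x_1,\xi_0)$ for all $m\geq n$; passing to the limit contradicts $A(x_1,\xi_0)>R(x_1,\xi_0)$. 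Hence $r_n(\xi_0)\geq\dist{x_1}{\xi_0}>\dist{x_0}{\xi_0}$ for every $n$, giving $R_n(x_0,\xi_0)=0$ for every $n$. In the remaining subcase, $\xi_0$ is unsated, i.e.\ $\int f_s(x,\xi_0)\varphi(dx)<1$; supposing for contradiction that $R_n(x_0,\xi_0)>0$ for some $n$, we get $r_n(\xi_0)\leq\dist{x_0}{\xi_0}<\infty$, and Remark~\ref{rem:GaleShapley} yields $\int(A_m-R_m)(x,\xi_0)\varphi(dx)=1$ for every $m\geq n$. Since $R_m(x,\xi_0)=A_m(x,\xi_0)$ for $\dist{x}{\xi_0}>r_m(\xi_0)$ and $r_m\leq r_n$ for $m\geq n$, the integrand is supported in $\ball{\xi_0}{r_n(\xi_0)}$, a set of finite $\varphi$-measure; dominated convergence then gives $\int(A-R)(x,\xi_0)\varphi(dx)=1$, contradicting unsatedness.

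The main technical obstacle is justifying the two dominated-convergence arguments; the crucial observation is that the relevant integrands are supported in the balls $\ball{x_0}{a(x_0)}$ and $\ball{\xi_0}{r_n(\xi_0)}$, which have finite $\psi$- and $\varphi$-measure respectively thanks to local finiteness of the measures.
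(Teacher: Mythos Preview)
Your proof is correct and follows essentially the same approach as the paper's: each statement is split into the two disjuncts of Definition~\ref{def:stable}, the witness subcases are handled by constraining $a_n(x_0)$ or $r_n(\xi_0)$ via monotonicity, and the unexhausted/unsated subcases are handled by dominated convergence on a ball of finite measure. In fact, your unsated subcase is more explicit than the paper's, which simply asserts ``if $\xi_0$ is unsated, then it has not rejected any weights'' without spelling out the DCT step that you provide.
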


 \begin{proof}
Suppose $x_0$ desire $\xi_0$.
If there is a center $\xi_1$ such that $\abs{\xi_1-x_0}>\abs{\xi_0-x_0}$ and $f(x_0,\xi_1)>0$, then $x_0$ has applied to $\xi_1$ and so it has applied to all points closer than $\xi_1$ with weight $1$. Hence $x_0$ has applied to $\xi_0$ with weight $1$. In the other case, $x_0$ is unexhausted.
It is enough to prove that $a_n(x_0)>\dist {x_0}{\xi_0}$ for sufficiently large $n$. If this is not true, $a_n(x_0)<\infty$ for all $n$ and Remark~\ref{rem:GaleShapley} gives
\[\int_{\myXi} \left(A_n(x_0,\xi)-R_{n-1}(x_0,\xi)\right)\psi(d\xi)= 1.\]
Since $a_n(x_0)<\dist{x_0}{\xi_0}$, $A_n(x_0,\cdot)-R_{n-1}(x_0,\cdot)$ is bounded by {$\identity_{\ball{x_0}{\dist{x_0}{\xi_0}}}$,} which is integrable with respect to $\psi$ due to locally finiteness of $\psi$. So, Lebesgue's dominated convergence theorem gives $\int_{\myXi}f_s(x_0,\xi)\psi(d\xi)=1$; i.e. $x_0$ is exhausted,
a contradiction.

Now, suppose that $\xi_0$ desires $x_0$. If $\xi_0$ is unsated, then it has not rejected any weights. So suppose there is a site $x_1$ such that $\abs{x_1-\xi_0}>\abs{x_0-\xi_0}$ and $f(x_1,\xi_0)>0$. Therefore $r_n(\xi_0)\geq \abs{x_1-\xi_0}$ for all $n$, or else $\xi_0$ would fully reject $x_1$ at all stages after stage $n$ since the rejection radius is non-increasing. Thus $r_n(\xi_0)>\abs{x_0-\xi_0}$ for all $n$ and so $\xi_0$ has not rejected any application weight of $x_0$.
\end{proof}

 \begin{proof}[Proof of Theorem~\ref{thm:stable}]
Let $f_s$ be the site-optimal density. \verified{By Lemma~\ref{lemma:increasing} we get that $f_s$ depends on $\varphi$ and $\omega$ in a measurable and flow-adapted manner.} We have \[f_s=\lim_{n\rightarrow\infty}A_n-R_{n-1}=\lim_{n\rightarrow\infty}A_n-R_{n}.\] Moreover, $A_n\geq R_{n-1}$ and $A_n\geq R_{n}$. By Remark~\ref{rem:GaleShapley} and Fato's lemma, it follows that $f_s$ is a \verified{sub-balancing} density. Also, $f_s$ is \verified{constrained} since $f_s\leq 1$. Now, suppose that $f_s$ is unstable. So we can find a site $x_0$ and a center $\xi_0$ that desire each other. By Lemma~\ref{lemma:unexhaustedApply} we get that $A_n(x_0,\xi_0)=1$ and $R_n(x_0,\xi_0)=0$ for sufficiently large $n$. So $f_s(x_0,\xi_0)=1-0=1$, a contradiction.

 \end{proof}

\begin{lemma}
\label{lemma:satedOrExhausted}
Let $f$ be any stable \verified{constrained} density for $\varphi$ and $\psi$ as in Definition~\ref{def:stable}.
Let $X'$ be the set of unexhausted sites and $\Xi'$ be the set of unsated centers. If $X'\neq \emptyset$, then $\psi(\Xi')<1$ and if $\Xi' \neq\emptyset$, then $\varphi(X')<1$. In particular, we have either $\varphi(X') < 1$ or $\psi(\Xi') < 1$.
\end{lemma}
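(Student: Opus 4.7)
The plan is to extract the contradiction directly from the definition of stability (Definition~\ref{def:stable}). The key observation is that if $x_0$ is unexhausted and $\xi_0$ is unsated, then by the very definition of $f$-desire, $x_0$ $f$-desires $\xi_0$ whenever $f(x_0,\xi_0)<1$ (the unexhaustedness clause is satisfied), and symmetrically $\xi_0$ $f$-desires $x_0$ (the unsatedness clause is satisfied). Stability therefore forces
\[
f(x_0,\xi_0)=1\qquad \text{for every } x_0\in X' \text{ and every } \xi_0\in \Xi'.
\]

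Given this, the first claim follows from a one-line integration. Fix any $x_0\in X'$. Then
\[
\psi(\Xi')=\int_{\Xi'} f(x_0,\xi)\,\psi(d\xi)\ \leq\ \int_{\mathbb R^d} f(x_0,\xi)\,\psi(d\xi)\ <\ 1,
\]
where the last strict inequality uses that $x_0$ is $f$-unexhausted. The symmetric argument, integrating $f(\cdot,\xi_0)$ against $\varphi$ for a fixed $\xi_0\in\Xi'$, yields $\varphi(X')<1$ whenever $\Xi'\neq\emptyset$.

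For the ``in particular'' assertion, note that if either $X'$ or $\Xi'$ is empty then the corresponding measure is $0<1$, so one of the two inequalities holds trivially; if both are non-empty, the first two parts give both inequalities. I do not foresee a main obstacle: the content is really just a careful unpacking of Definition~\ref{def:stable}. The only thing to be mindful of is that the definition of desire requires $f(x_0,\xi_0)<1$ as a prerequisite, which is precisely the negation of the conclusion and so lines up perfectly with a proof-by-contradiction packaging if one prefers that style.
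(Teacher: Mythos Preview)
Your proof is correct and follows essentially the same approach as the paper's. The paper packages it as a contradiction (assume $\psi(\Xi')\geq 1$, then find $\xi_0\in\Xi'$ with $f(x_0,\xi_0)<1$, giving an unstable pair), whereas you take the contrapositive/direct route by first observing $f\equiv 1$ on $X'\times\Xi'$ and then integrating; the underlying idea is identical.
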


\begin{proof}
First, suppose $X'\neq\emptyset$ and $\psi(\Xi')\geq 1$. Let $x_0\in X'$. Since $x_0$ is unexhausted and $\psi(\Xi')\geq 1$, we find a point $\xi_0\in \Xi'$ such that $f(x_0,\xi_0)<1$. Now $(x_0,\xi_0)$ is an unstable pair since $x_0$ is unexhausted and $\xi_0$ is unsated, a contradiction. Similarly, if $\Xi'\neq\emptyset$ we conclude that $\varphi(X_0)<1$, which completes the proof.

 \end{proof}

 Note that it is possible that both $X'$ and $\Xi'$ in Lemma~\ref{lemma:satedOrExhausted} have positive measure, as shown in Example~\ref{ex:interval}.

 \begin{lemma}
\label{lemma:doubleCounting}
Let $\Phi$ and $\Psi$ be stationary random measures on $\mathbb R^d$ satisfying the assumptions of Theorem~\ref{thm:coupling}.
For any flow-adapted \verified{sub-balancing} density $F$ we have
\begin{equation}
\lambda_{\Phi} \omidPalm{\Phi}{\g 0Ft{\Psi}} = \lambda_{\Psi} \omidPalm{\Psi}{\h 0Ft{\Phi}}.
\end{equation}
\end{lemma}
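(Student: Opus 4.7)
The plan is to obtain this identity as a direct application of Neveu's exchange formula \eqref{eq:Neveu}. The appropriate test function is
$$H(\omega, s) := F_\omega(0, s)\, 1_{|s| \leq t},$$
which is non-negative and jointly measurable on $\Omega \times \mathbb R^d$ because $F$ is assumed to be a measurable function of $\omega$ and its spatial variables.

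Substituting $H$ into the left-hand side of \eqref{eq:Neveu} yields
$$\lambda_\Phi \mathbb E_\Phi\left[\int F_\omega(0, s)\, 1_{|s|\leq t}\, \Psi(ds)\right] = \lambda_\Phi \mathbb E_\Phi\left[\g 0 F t \Psi\right],$$
which matches the left-hand side of the lemma by the definition of $\g 0 F t \Psi$. For the right-hand side of \eqref{eq:Neveu}, the flow-adaptedness of $F$ gives $F_{\theta_s \omega}(x, y) = F_\omega(x+s, y+s)$, so
$$H(\theta_s \omega, -s) = F_{\theta_s \omega}(0, -s)\, 1_{|-s|\leq t} = F_\omega(s, 0)\, 1_{|s|\leq t}.$$
Hence the right-hand side of \eqref{eq:Neveu} becomes $\lambda_\Psi \mathbb E_\Psi[\h 0 F t \Phi]$, as desired.

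No substantive obstacle arises; the only delicate point is carefully applying flow-adaptedness to rewrite $F_{\theta_s \omega}(0, -s)$ in terms of $F_\omega$, after which the lemma follows by direct substitution into Neveu's formula. Measurability and non-negativity of the integrands needed to justify \eqref{eq:Neveu} are immediate from the hypotheses on $F$.
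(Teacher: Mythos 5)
Your proof is correct and follows essentially the same route as the paper: both apply Neveu's exchange formula to the test function $H(\omega,s)=F_\omega(0,s)1_{|s|\leq t}$ and use flow-adaptedness to rewrite $F_{\theta_s}(0,-s)$ as $F_{\theta_0}(s,0)$. (If anything, your use of $1_{|s|\leq t}$ matches Definition~\ref{def:g,h} more precisely than the paper's $1_{|s|<t}$.)
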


Intuitively, this means that the average mass that is transported to {a typical center from the sites of distance at most $t$} is equal to the mass that is transported from {a typical site to the centers of distance at most $t$}. \verified{This is a version of \textit{mass transport principles}. A more general equation can be found in~\cite{ThLa09}.}

 \begin{proof}
Let $H(\omega,s):=F_{\omega}(0,s)1_{\abs{s}<t}$. By Neveu's exchange formula~\eqref{eq:Neveu} we get
\begin{eqnarray*}
\lambda_{\Phi} \omidPalm{\Phi}{\g 0Ft{\Psi}} &=& \lambda_{\Phi}\omidPalm{\Phi}{\int_{\mathbb R^d}F_{\theta_0}(0,s)1_{\abs{s}<t}\Psi(ds)}
\\
&=& \lambda_{\Psi} \omidPalm{\Psi}{\int_{\mathbb R^d} F_{\theta_s}(0,-s)1_{\norm{-s}<t} \Phi(d s)}\\
&=& \lambda_{\Psi} \omidPalm{\Psi}{\int_{\mathbb R^d} F_{\theta_0}(s,0)1_{\norm{s}<t} \Phi(d s)}\\
&=& \lambda_{\Psi} \omidPalm{\Psi}{\h 0Ft{\Phi}}.
\end{eqnarray*}
\end{proof}

 \begin{proof}[Proof of Theorem~\ref{thm:coupling}]
Let $\Phi_1$ and $\Psi_1$ be the restrictions of $\Phi$ and $\Psi$ to the set of unexhausted sites and unsated centers respectively. {Since $F$ is flow-adapted, $\Phi_1$ and $\Psi_1$ are also ergodic random measures.}
\verified{Using Definition~\ref{def:g,h},} define
\begin{eqnarray*}
U_x(F) &:=& 1 - \g xF{\infty}{\Psi},\\
U^{\xi}(F) &:=& 1- \h {\xi}F{\infty}{\Phi}.
\end{eqnarray*}
By Lemma~\ref{lemma:doubleCounting} for $t=\infty$ we get
\[
\lambda_{\Phi}\omidPalm{\Phi}{1-U_0(F)} =\lambda_{\Psi}\omidPalm{\Psi}{1-U^0(F)}.
\]
Thus
\begin{equation}
\label{eq:thm:coupling-1}
\lambda_{\Phi}\omidPalm{\Phi}{U_0(F)}-\lambda_{\Psi}\omidPalm{\Psi}{U^0(F)} = \lambda_{\Phi} - \lambda_{\Psi}.
\end{equation}
By \eqref{eq:omidPalm}, if $B$ is a cube in $\mathbb R^d$, then we have
\begin{eqnarray*}
\lambda_{\Phi}\omidPalm{\Phi}{U_0(F)} &=& \frac 1{\mathcal L_d(B)}\omid{\int_{B}U_0(F_{\theta_s})\Phi(ds)}\\
&=& \frac 1{\mathcal L_d(B)}\omid{\int_{B}U_s(F)\Phi(ds)}\\
&=& \frac 1{\mathcal L_d(B)}\omid{\int_{B}U_s(F)\Phi_1(ds)},
\end{eqnarray*}
{where the last equality is due to the fact that $\Phi_1 = \identity_{\{U_s(F)>0\}} \Phi$.}
Since $U_s(F)$ is bounded and $\Phi_1$ is ergodic, we get that
\begin{eqnarray}
\label{eq:thm:coupling-2}
\left\{
\renewcommand*{\arraystretch}{1.2}
\begin{array}{rcl}
\omidPalm{\Phi}{U_0(F)}=0 &\Leftrightarrow &\omid{\Phi_1(B)}=0 \Leftrightarrow \Phi_1(\mathbb R^d)=0, \text{a.s.}\\
\omidPalm{\Phi}{U_0(F)}>0 &\Leftrightarrow &\omid{\Phi_1(B)}>0 \Leftrightarrow \Phi_1(\mathbb R^d)=\infty, \text{a.s.}
\end{array}\right.
\end{eqnarray}
Similarly,
\begin{eqnarray}
\label{eq:thm:coupling-3}
\left\{
\renewcommand*{\arraystretch}{1.2}
\begin{array}{rcl}
\omidPalm{\Psi}{U^0(F)}=0 &\Leftrightarrow &\omid{\Psi_1(B)}=0 \Leftrightarrow \Psi_1(\mathbb R^d)=0, \text{a.s.}\\
\omidPalm{\Psi}{U^0(F)}>0 &\Leftrightarrow &\omid{\Psi_1(B)}>0 \Leftrightarrow \Psi_1(\mathbb R^d)=\infty, \text{a.s.}
\end{array}\right.
\end{eqnarray}
Since $F$ is stable almost surely, Lemma~\ref{lemma:satedOrExhausted} gives that, almost surely, either $\Phi_1(\mathbb R^d) < \infty$ or $\Psi_1(\mathbb R^d) < \infty$. By ergodicity, either $\Phi_1(\mathbb R^d)<\infty$ a.s. or $\Psi_1(\mathbb R^d)<\infty$ a.s. Hence, according to~\eqref{eq:thm:coupling-2} and~\eqref{eq:thm:coupling-3}, either $\omidPalm{\Phi}{U_0(F)}=0$ or $\omidPalm{\Psi}{U^0(F)}=0$. If we substitute this in~\eqref{eq:thm:coupling-1}, we get
\begin{equation}
\label{eq:thm:coupling-4}
\renewcommand*{\arraystretch}{1.4}
\left\{\begin{array}{lcl}
\omidPalm{\Phi}{U_0(F)} &=& \max\{0,1-\frac{\lambda_{\Psi}}{\lambda_{\Phi}}\},\\
\omidPalm{\Psi}{U^0(F)} &=& \max\{0,1-\frac{\lambda_{\Phi}}{\lambda_{\Psi}}\}.
\end{array}\right.
\end{equation}
These equalities, \eqref{eq:thm:coupling-2} and~\eqref{eq:thm:coupling-3} complete the proof.
\end{proof}

%

\verified{
\begin{proof}[Proof of Theorem~\ref{thm:site-optimalBalancing}]
By Lemma~\ref{lemma:increasing}, the site-optimal density for $\Phi$ and $\Psi$ gives a flow-adapted transport kernel. Moreover, it is a stable  density by Theorem~\ref{thm:stable}. Now the theorem is a direct consequence of part~\eqref{thm:coupling:=} of Theorem~\ref{thm:coupling}.
\end{proof}
}

\begin{proof}[Proof of Theorem~\ref{thm:extraHead}]

%
\verified{Let $H$ be any measurable function on $M$. Since $\mathbb P_{\Phi}=\mathbb P$, it is a direct consequence of Corollary~4.7 in~\cite{ThLa09} that $\omidPalm{\Psi}{H(\Psi)} = \omid{H(\theta_Y\Psi)}$ in case~\eqref{thm:extraHead:1} and $\omidPalm{\Psi}{H(\theta_Y\Psi)}=\omid{H(\Psi)}$ in case~\eqref{thm:extraHead:2}. This finishes the proof. In fact, case~\eqref{thm:extraHead:1} is just Example~4.8 in~\cite{ThLa09}.}

 \end{proof}

\begin{proof}[Proof of Proposition~\ref{prop:assumption}]
\verified{Suppose $\varphi$ assigns zero to all spheres and all affine hyperplanes.} So $\varphi(\partial B)=0$ for every ball $B$. Thus,~\eqref{eq:assump:uniqueness2} holds for all $\xi$.
For $0\leq k<d$, by a $k$-dimensional sphere, we mean the intersection of the boundary of a ball with a non-tangent affine subspace of dimension $k+1$. We call a $k$-dimensional sphere $S$ \myemph{bad} if $\psi(S \backslash \{s\})>0$ for every point $s$. A bad sphere is called \myemph{minimal} if it contains no other bad spheres of lower dimension as a subset. If~\eqref{eq:assump:uniqueness1} fails for a site $x$, then $x$ is equidistant from the points of a minimal bad sphere. Since the set of points that are equidistant from all points of a sphere is a proper affine subspace of $\mathbb R^d$, it suffices to show that there exist only a countable number of minimal bad spheres.

 Let $T$ be the set of atoms of $\psi$, which is countable due to locally finiteness of $\psi$. zero-dimensional bad spheres are just pairs of atoms and so they are countable. Also, a positive-dimensional minimal bad sphere $S$ contains at most one atom and so $\psi(S\backslash T)>0$. Suppose there are an uncountable number of minimal bad spheres. So, there exist $R>0$ and $\epsilon>0$ such that there are infinitely many \verified{minimal} bad spheres $\S_i, i\in\mathbb N$ such that $S_i\subseteq \ball 0R$ and $\psi(S_i\backslash T)>\epsilon$. For $i\neq j$, since $S_i\cap S_j$ is not a bad sphere, we have $\psi\left((S_i\backslash T) \cap (S_j\backslash T)\right)=0$. Therefore
$
\psi(\ball 0R)\geq \sum_i \psi(S_i\backslash T)=\infty,
$
a contradiction.
\end{proof}

\new{
\begin{proof}[Proof of Proposition~\ref{prop:spatialAverage}]
	The claim is a direct consequence of~\eqref{eq:thm:coupling-4} and Birkhoff's theorem.
\end{proof}
}

\begin{proof}[Proof of Theorem~\ref{thm:monotonicity}]

 \eqref{item:monotonicity1}
Suppose the statement is false and let $n$ be the first stage that $f(x,\xi)+R_{n}(x,\xi)> 1$ for a positive $(\mu\otimes\psi)$-measure of pairs $(x,\xi)$. By Fubini's theorem and Assumption~\ref{assump:uniqueness} for $\mu$ and $\psi$ we can find a set $\Xi_1$ with $\psi(\Xi_1)>0$ such that for each center $\xi_0\in\Xi_1$, the set
\[T:={T_{\xi_0}:=}\{x: f(x,\xi_0)+R_{n}(x,\xi_0)>1\}\]
has positive $\mu$-measure and
\begin{equation}
\label{eq:assump:uniqueness4}
\forall r>0, \exists s\in\partial \ball {\xi_0}r : \mu(\partial\ball {\xi_0}r \backslash \{s\})=0.
\end{equation}
By the definition of $T$, we have $R_{n}(x,\xi_0)>1-f(x,\xi_0)\geq 0$ for $x\in T$. So $\xi_0$ has rejected some weight from all sites in $T$ at stage $n$ and so $\xi_0$ is sated at that stage. Thus, if we let $B:={B_{\xi_0}:=}\ball {\xi_0}{r_n(\xi_0)}$, we have
\begin{equation}
\label{eq:lemma:monotonicity1}
\int_B A_n(x,\xi_0)-R_{n}(x,\xi_0)\varphi(dx)=1
\end{equation}
and moreover, $T$ is disjoint from the interior of $B$. Since $\int_{\mathbb R^d}f(x,\xi_0)\mu(dx)\leq 1$ and $\mu\geq\varphi$,~\eqref{eq:lemma:monotonicity1} gives
\begin{equation}
\label{eq:lemma:monotonicity2}
\int_B [A_n-R_{n}-f](x,\xi_0)\varphi(dx)\geq 0.
\end{equation}
\begin{lemma}
\label{lemma:monotonicity-lemma}
There is a subset $B'_{\xi_0}\subseteq B$ with positive $\varphi$-measure such that for all $x\in B'_{\xi_0}$ we have
\begin{enumerate}[(a)]
\item
\label{lemma:monotonicity-lemma1}
$[A_n-R_{n}-f](x,\xi_0)>0$,
\item
\label{lemma:monotonicity-lemma2}
\verified{$x$ is closer than some point in $T$  to $\xi_0$.
}
\end{enumerate}
\end{lemma}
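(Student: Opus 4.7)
The strategy is to upgrade the non-negativity of $\int_B [A_n-R_n-f](x,\xi_0)\varphi(dx)$ provided by \eqref{eq:lemma:monotonicity2} to strict positivity, and then to descend to a $\varphi$-positive subset where (a) holds and (b) follows automatically from the geometric separation between $B$ and $T$. Combining \eqref{eq:lemma:monotonicity1} with $\mu\geq\varphi$ and $\int f(x,\xi_0)\mu(dx)\leq 1$ yields the chain
\[
\int_B [A_n-R_n-f](x,\xi_0)\varphi(dx) = 1 - \int_B f(x,\xi_0)\varphi(dx) \geq 1 - \int_B f(x,\xi_0)\mu(dx) \geq \int_{B^c} f(x,\xi_0)\mu(dx).
\]
The inclusion $T\cap B^{\circ}=\emptyset$ is automatic, since $f\leq 1$ and $R_n(\cdot,\xi_0)=0$ on $B^{\circ}$ force $f+R_n\leq 1$ there. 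Combined with $f(x,\xi_0)>0$ on $T$ (which follows from $f+R_n>1$ and $R_n\leq 1$), this means $\mu(T\setminus\partial B)>0$ would immediately make the rightmost term strictly positive.

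In the generic case $\mu(T\setminus\partial B)>0$, the integrand $[A_n-R_n-f](\cdot,\xi_0)$ is strictly positive on some $\varphi$-positive subset $B''\subseteq B$; take $B'_{\xi_0}=B''$. Property (a) is immediate. For (b), any $x\in B'_{\xi_0}$ satisfies $\abs{x-\xi_0}\leq r_n(\xi_0)<\abs{y-\xi_0}$ for any $y\in T\setminus\partial B$ (which exists by the strict positivity argument), so $x$ is strictly closer to $\xi_0$ than the specific $y\in T$, as required.

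The main obstacle is the degenerate case $\mu(T)=\mu(T\cap\partial B)>0$, in which all the $\mu$-mass of $T$ sits on the single sphere $\partial B$ at distance $r_n(\xi_0)$ from $\xi_0$. Here Assumption~\ref{assump:uniqueness} applied to $(\mu,\psi)$ forces $\mu(\partial B\setminus\{s\})=0$ for some $s\in\partial B$, so $\mu$ must carry an atom at $s\in T$ and $T$ reduces essentially to $\{s\}$ under $\mu$. I plan to eliminate this degeneracy at the level of the outer argument of Theorem~\ref{thm:monotonicity}, by restricting the original $\Xi_1$ to a $\psi$-positive sub-collection on which the generic case prevails: since $\mu$ possesses only countably many atoms, the set of $\xi_0\in\Xi_1$ whose $T_{\xi_0}$ collapses to a single $\mu$-atom can be written as a countable union indexed by those atoms and the values of $r_n(\xi_0)$, and a careful measure-theoretic step (using the flexibility in how $\Xi_1$ was extracted via Fubini) should allow this subset to be discarded without sacrificing $\psi(\Xi_1)>0$. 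This atomic bookkeeping, exploiting the interplay between $\mu$-atoms, the variable rejection radii $r_n(\xi_0)$, and positive $\psi$-mass, is the most delicate point of the argument.
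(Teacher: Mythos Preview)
Your treatment of the generic case $\mu(T\setminus\partial B)>0$ is correct and matches the paper's Case~1. The gap is in the degenerate case $\mu(T)=\mu(T\cap\partial B)>0$, which you try to dispose of by shrinking $\Xi_1$ rather than by a direct argument.

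This plan does not work as stated. Your proposed decomposition ``indexed by those atoms and the values of $r_n(\xi_0)$'' is not a countable one, since $r_n(\xi_0)$ ranges over a continuum. Even indexing only by the countably many $\mu$-atoms $s$, nothing prevents the set $\{\xi_0\in\Xi_1:\ \mu(T_{\xi_0}\setminus\{s\})=0,\ s\in\partial B_{\xi_0}\}$ from having positive $\psi$-measure for a single atom $s$; the condition $r_n(\xi_0)=\abs{s-\xi_0}$ with $f(s,\xi_0)+R_n(s,\xi_0)>1$ is not a $\psi$-null constraint in general. There is also no ``flexibility in how $\Xi_1$ was extracted via Fubini'' to exploit: $\Xi_1$ is simply the set of $\xi_0$ with $\mu(T_{\xi_0})>0$ satisfying \eqref{eq:assump:uniqueness4}, and the degenerate $\xi_0$'s may exhaust it.

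The paper handles the degenerate situation directly by splitting it into two subcases. If $\varphi(T\cap\partial B)=0$, then \eqref{eq:assump:uniqueness4} forces $\mu(\partial B\setminus T)=0$, hence $\varphi(\partial B)=0$, so one may replace $B$ by $B^{\circ}$ in \eqref{eq:lemma:monotonicity1}--\eqref{eq:lemma:monotonicity2} and rerun the Case~1 argument (now $T\cap B^{\circ}=\emptyset$ suffices). If instead $\varphi(T\cap\partial B)>0$, then on $T\cap\partial B$ one has $A_n-R_n-f<A_n-1\leq 0$, so $\int_{\partial B}[A_n-R_n-f]\,\varphi<0$ (using $\varphi(\partial B\setminus T)=0$ again), and \eqref{eq:lemma:monotonicity2} forces $\int_{B^{\circ}}[A_n-R_n-f]\,\varphi>0$, yielding $B'_{\xi_0}\subseteq B^{\circ}$. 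In both subcases $B'_{\xi_0}$ lands in $B^{\circ}$ and any point of $T\cap\partial B$ witnesses (b). You should replace your avoidance strategy with this direct case split.
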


 To prove the lemma, we consider three cases. Note that if $\mu$ is absolutely continuous w.r.t. the Lebesgue measure, then only the first case happens.

 \myemph{Case 1.} Suppose $\mu(T\cap\partial B)=0$ and thus $\mu(T\backslash B)>0$.
Since $\mu(T)>0$ and $f(x,\xi_0)>1-R_{n}(x,\xi_0)\geq 0$ for all $x\in T$, we have
$\int_T f(x,\xi_0)\mu(dx)>0$. Thus, by the assumption of this case,
\[\int_B f(x,\xi_0)\mu(dx) \leq 1-\int_T f(x,\xi_0)\mu(dx) <1.\]
Therefore, the inequality in~\eqref{eq:lemma:monotonicity2} is strict. Thus, the integrand is positive on a set with non-zero $\varphi$-measure, which is the desired set.

 \myemph{Case 2.} Suppose $\mu(T\cap\partial B)>0$ but $\varphi(T\cap\partial B)=0$. By~\eqref{eq:assump:uniqueness4} we get $\mu(\partial B\backslash T)=0$ and thus $\varphi(\partial B\backslash T)=0$. So $\varphi(\partial B)=0$
and we can replace $B$ by $B^{\circ}$ in~\eqref{eq:lemma:monotonicity1} and~\eqref{eq:lemma:monotonicity2}.
The rest of the argument is similar to the previous case since $B^{\circ}\cap T=\emptyset$.

 \myemph{Case 3.} Suppose $\varphi(T\cap\partial B)>0$. {Since $f+R_{n}>1$ on $T\times\{\xi_0\}$, we have}
\[
\int_{T\cap\partial B} [A_n-R_{n}-f](x,\xi_0)\varphi(dx)<0.
\]
On the other hand, \eqref{eq:assump:uniqueness4} gives $\varphi(\partial B\backslash T)=0$. Now~\eqref{eq:lemma:monotonicity2} gives
\[
\int_{B^{\circ}}[A_n-R_{n}-f](x,\xi_0)\varphi(dx)>0.
\]
So the integrand is positive on a set with positive $\varphi$-measure, which is the desired set. This completes the proof of Lemma~\ref{lemma:monotonicity-lemma}.

 For $x\in B'_{\xi_0}$, part~\eqref{lemma:monotonicity-lemma1} of Lemma~\ref{lemma:monotonicity-lemma} implies $f(x,\xi_0)<1$.
By the definition of $T$ we get $f(\cdot,\xi_0)>0$ on $T$. So $\xi_0$, $f$-desires $x$ because of part~\eqref{lemma:monotonicity-lemma2} of Lemma~\ref{lemma:monotonicity-lemma};
i.e. it desires all sites in $B'_{\xi_0}$.

 By Fubini's theorem for the set $\{(x,\xi):\xi\in\Xi_1,x\in B'_{\xi}\}$ {and Assumption~\ref{assump:uniqueness} for $\mu$ and $\psi$} we get that there is a site $x_0$ such that the set
\[\Xi_2:=\{\xi\in\Xi_1: x_0\in B'_{\xi}\}\]
has positive $\psi$-measure {and
\begin{equation}
\label{eq:assump:uniqueness3}
\forall r>0, \exists s\in\partial \ball {x_0}r : \psi(\partial\ball {x_0}r \backslash \{s\})=0.
\end{equation}
Note that our construction of $B'_{\xi_0}$ is given in terms of some inequalities. So the above set is measurable and Fubini's theorem is valid}.
Let $B_1$ be the smallest closed ball centered at $x_0$ (with possibly infinite radius) that contains $\Xi_2$.
Part~\eqref{lemma:monotonicity-lemma1} of Lemma~\ref{lemma:monotonicity-lemma} implies
\begin{eqnarray}
\label{eq:Xi2-1}
1-R_{n-1}(x_0,\xi) &\geq &[A_n-R_{n-1}](x_0,\xi) \\
\nonumber &\geq &[A_n-R_{n}](x_0,\xi)\\
\nonumber &>&f(x_0,\xi)\geq 0, \quad {\forall \xi\in \Xi_2.}
\end{eqnarray}
As a result, $x_0$ has applied to {all centers in $\Xi_2$} at stage $n$. Therefore, $A_n(x_0,\cdot)\equiv 1$ on $B_1^{\circ}$. Furthermore, by the choice of $n$ we could choose $x_0$ such that $\psi$-a.e. we have $f(x_0,\cdot)+R_{n-1}(x_0,\cdot)\leq 1$. Thus
\begin{eqnarray}
\label{eq:Xi2-2}
[A_n-R_{n-1}](x_0,\cdot)\geq f(x_0,\cdot) &\text{on} & B_1^{\circ}.
\end{eqnarray}

If $\psi(\Xi_2\cap B_1^{\circ})>0$,~\eqref{eq:Xi2-1} and~\eqref{eq:Xi2-2} give
\[
\int_{B_1^{\circ}}f(x_0,\xi)\nu(d\xi)<\int_{B_1^{\circ}}[A_n-R_{n-1}](x_0,\xi)\psi(d\xi)\leq 1,
\]
where the second inequality is due to the definition of $A_n$. Therefore, by Remark~\ref{rem:desire} we get that $x_0$, $f$-desires the centers in $\Xi_2\cap B_1^{\circ}$. {This gives an unstable pair for $f$, a} contradiction. So suppose $\psi(\Xi_2\cap B_1^{\circ})=0$. We should have $\psi(\Xi_2\cap \partial B_1)>0$. But~\eqref{eq:assump:uniqueness3} gives $\psi(\partial B_1 \backslash \Xi_2)=0$ and {as before we get}
\[
\int_{B_1}f(x_0,\xi)\nu(d\xi)<\int_{B_1}[A_n-R_{n-1}](x_0,\xi)\psi(d\xi)\leq 1.
\]
So $x_0$, $f$-desires the centers in $\Xi_2\cap\partial B_1$, a contradiction.

 \eqref{item:monotonicity2}
If $x$ fully applies to $\xi$, i.e. $A(x,\xi)=1$, then we have $f_s(x,\xi)=1-R(x,\xi)$ by the definition of $f_s$. Hence, the claim is a direct consequence of~\eqref{item:monotonicity1}.

 \eqref{item:monotonicity3}
{Let $X$ be the set of sites $x$ such that for $\psi$-a.e. $\xi$ the claim of~\eqref{item:monotonicity2} holds for $(x,\xi)$. By~\eqref{item:monotonicity2} we have $\mu(X^c)=0$. We prove that all sites in $X$ satisfy the claim of~\eqref{item:monotonicity3}.}
Suppose $x_0\in X$ and $\g {x_0}{f}t{\nu} > \g {x_0}{f_s}t{\psi} $. Therefore, $\g {x_0}{f_s}t{\psi}<1$. It follows that either $x_0$ is ${f_s}$-unexhausted or ${f_s}(x_0,\cdot)$ is positive somewhere outside $\ball{x_0}t$. Since ${f_s}$ is obtained by the site-optimal Gale-Shapley algorithm, in both cases $x_0$ has applied to all centers in $\ball {x_0}t$ with weight $1$ (for the first case use Lemma~\ref{lemma:unexhaustedApply} and for the second case note that $x_0$ has applied to some center outside the ball).
{The definition of $X$ implies that }
$f(x_0,\cdot)\leq {f_s}(x_0,\cdot)$, $\psi$-a.e. on $\ball{x_0}t$. Since $\nu\leq \psi$, we get $\g {x_0}{f}t{\nu}\leq \g {x_0}{f_s}t{\psi}$,
a contradiction.

 \eqref{item:monotonicity4}
By the right-continuity of $\h {\xi}f{\cdot}{\mu}$ and $\h {\xi}{f_s}{\cdot}{\varphi}$, it is enough to prove the claim for rational $t$. If this doesn't hold, we can find $t\in[0,\infty]$ and $S\subseteq{\myXi}$ such that $\psi(S)>0$ and $\h {\xi}{f}t{\mu} < \h {\xi}{f_s}t{\varphi}$ for all $\xi\in S$. For arbitrary $\xi_0\in S$,
since $\mu\geq\varphi$, the set of sites
\[T_{\xi_0}:=\{x\in \ball{\xi_0}t:f(x,\xi_0)<{f_s}(x,\xi_0)\}\]
has positive $\mu$-measure.
Moreover, we have $\h {\xi_0}{f}t{\mu}<1$ and $f(\cdot,\xi_0)<1$ on $T_{\xi_0}$. Therefore,
Remark~\ref{rem:desire} gives that $\xi_0$, $f$-desires all points of $T_{\xi_0}$.
We will show that $\xi_0$ can be chosen such that some point of $T_{\xi_0}$ also $f$-desires $\xi_0$ and contradiction follows.

 Assumption~\ref{assump:uniqueness} for $\mu$ and $\psi$,~\eqref{item:monotonicity2} and Fubini's theorem on $\{(x,\xi):\xi\in S, x\in T_{\xi}\}$ (which is measurable) imply that there exists a site $x_0$ such that
\begin{enumerate}[(a)]
\item
	\label{monotonicity-iv-a}
	\eqref{eq:assump:uniqueness3} holds,
\item
	\label{monotonicity-iv-b}	
	statement \eqref{item:monotonicity2} is valid for $x=x_0$ and $\psi$-a.e. $\xi$,
\item
	\label{monotonicity-iv-c}
	$\psi(C)>0$, where $C=\{\xi\in S: x_0\in T_{\xi}\}$.
\end{enumerate}
In fact, these conditions are satisfied by $\mu$-a.e. $x_0$.
Consider the smallest ball $B$ centered at $x_0$ (possibly with infinite radius) that contains $C$.
Since ${f_s}(x_0,\cdot)>f(x_0,\cdot)\geq 0$ on $C$, we get that $x_0$ has applied to the interior of $B$ with full weight. {Therefore,~\eqref{monotonicity-iv-b} implies}
\[f(x_0,\cdot)\leq {f_s}(x_0,\cdot),\quad \psi\text{-a.e. on } \interior{B}.\]

 \myemph{Case 1.} $\psi(C\cap \interior{B})>0$. By the above equation we get
\[
\int_{\interior{B}}f(x_0,\xi)\psi(d\xi) < \int_{\interior{B}} {f_s}(x_0,\xi)\psi(d\xi).
\]
Therefore $\int_{\interior B} f(x_0,\xi)\nu(d\xi)<1$. So $x_0$ $f$-desires all centers in $C\cap \interior B$ by Remark~\ref{rem:desire}, which gives an unstable pair for $f$, a contradiction.

 \myemph{Case 2.} $\psi(C\cap \interior B)=0$. Therefore $\psi(C\cap \partial B)>0$.
{By~\eqref{monotonicity-iv-a}}
we get $\psi(\partial B \backslash C)=0$. Since ${f_s}(x_0,\cdot)>f(x_0,\cdot)$ on $C$ we get as before
\[
\int_{B}f(x_0,\xi)\psi(d\xi) < \int_{B} {f_s}(x_0,\xi)\psi(d\xi).
\]
Therefore $\g {x_0}{f}s{\nu}<1$, where $s$ is the radius of $B$. So $x_0$ $f$-desires the centers in $C\cap\partial B$ by Remark~\ref{rem:desire}, a contradiction again.

 \end{proof}

\begin{proof}[Proof of Proposition~\ref{prop:uniqueness}]
Let $f$ be an arbitrary stable \verified{constrained} density for $\varphi$ and $\psi$. Corollary~\ref{cor:optimality} implies that for $\varphi$-a.e. site $x$ we have
\begin{equation}
\label{eq:uniqueness:1}
\g x{f_s}t{\psi} = \g xft{\psi} ,\quad \forall t\in[0,\infty].
\end{equation}
We can use this equation for $t<a_s(x)$ (where $a_s(x)$ is the application radius of $x$ in the site-optimal Gale-Shapley algorithm) and part~\eqref{item:monotonicity2} of Theorem~\ref{thm:monotonicity} to obtain that 
for $(\varphi\otimes\psi)$-a.e. $(x,\xi)$,
if $\abs{x-\xi}<a_s(x)$, then $f_s(x,\xi)=f(x,\xi)$.
Assumption~\ref{assump:uniqueness} and~\eqref{eq:uniqueness:1} for $t=a_s(x)$ imply that this is also valid for $\abs{x-\xi}=a_s(x)$; i.e.
for $(\varphi\otimes\psi)$-a.e. $(x,\xi)$,
if $\abs{x-\xi}\leq a_s(x)$, then $f_s(x,\xi)=f(x,\xi)$.
Since $f_s(x,\cdot)\equiv 0$ outside $\ball x{a_s(x)}$, we use~\eqref{eq:uniqueness:1} for $t=\infty$ to obtain 
{that $(\varphi\otimes\psi)$-a.e. we have $f_s=f$.
This proves the claim.}
\end{proof}

\begin{proof}[Proof of Theorem~\ref{thm:uniqueness}]
We should prove that \verified{almost surely} any two stable \verified{constrained} densities are equal except on a set with zero $(\Phi_{\omega}\otimes\Psi_{\omega})$-measure. Let $f_s$ and $f_c$ be \verified{the site-optimal and the center-optimal densities} respectively. We take expectations in Corollary~\ref{cor:optimality} and apply Lemma~\ref{lemma:doubleCounting} to get
\begin{eqnarray*}
\lambda_{\Phi}\omidPalm{\Phi}{\g 0{f_s}t{\Psi}}& \geq & \lambda_{\Phi}\omidPalm{\Phi}{\g 0{f_c}t{\Psi}} = \lambda_{\Psi}\omidPalm{\Psi}{\h 0{f_c}t{\Phi}}\\
&\geq & \lambda_{\Psi}\omidPalm{\Psi}{\h 0{f_s}t{\Phi}}=\lambda_{\Phi}\omidPalm{\Phi}{\g 0{f_s}t{\Psi}}.
\end{eqnarray*}
Therefore, all inequalities are indeed equality. Hence \[\omidPalm{\Phi}{\g 0{f_s}t{\Psi} - \g 0{f_c}t{\Psi}} = 0.\] By an argument similar to~\eqref{eq:thm:coupling-2} we get
\[
\omid{\int_{\mathbb R^d}\g x{f_s}t{\Psi}-\g x{f_c}t{\Psi}\Phi(dx)}=0.
\]

 Since the integrand is non-negative by Corollary~\ref{cor:optimality}, for a given $t$, we almost surely have for $\Phi_{\omega}$-almost every $x\in\mathbb R^d$
\[
\g x{f_s}t{\Psi} = \g x{f_c}t{\Psi}.
\]
Considering this for rational $t$ and using right-continuity of $\g x{f_s}{\cdot}{\Psi}$ and $\g x{f_c}{\cdot}{\Psi}$ we get that almost surely for $\Phi_{\omega}$-a.e. $x$ we have
\begin{equation*}
\g x{f_s}t{\Psi} = \g x{f_c}t{\Psi},\quad \forall t\in[0,\infty].
\end{equation*}

 Now, for a sample $\omega\in\Omega$ such that the above equation holds for $\Phi_{\omega}$-a.e. $x$ and Assumption~\ref{assump:uniqueness} holds for $\Phi_{\omega}$ and $\Psi_{\omega}$, the claim is a direct consequence of Proposition~\ref{prop:uniqueness}.
\end{proof}

	\begin{proof}[Proof of Lemma \ref{lemma:star}]
	\verified{
		\verified{Remark~\ref{rem:VoronoiIff} easily implies that the Voronoi territory of $\xi$ is star-shaped with center $\xi$.}
		As an example {for non-convex territories,} let $\psi$ be half of the counting measure on the vertices of an equilateral triangle in the plane. \verified{Also, if $\psi=\mathcal L+\delta_0$, we see that the Voronoi territory of center $0$ is not closed.} 
		
		
		
		Now, suppose the support of $\psi$ is a discrete set $\{\xi_1,\xi_2,\ldots\}$. 
		\verified{
			Let $D_i$ be the Voronoi territory of a center $\xi_i$.
			In this case, Remark~\ref{rem:VoronoiIff} gives that $x\in D_i$ if and only if $\psi(\ballint{x}{\dist x{\xi_i}})<1$. This easily implies that $D_i$ is closed.
		}
		With the notations of Definition~\ref{def:vtk}, let $A$ be the set of points $x\in\mathbb R^d$ such that $\partial \ball{x}{s(x)}$ contains more than one atom of $\psi$. In each component of $A^c$, the set of atoms in $\ball{x}{s(x)}$ is fixed. Therefore, all Voronoi territories are bound by $A$. Moreover, $A\subseteq \cap_{i,j}A_{i,j}$, where $A_{i,j}:=\{x:\{\xi_i,\xi_j\}\subseteq\partial\ball{x}{s(x)}\}$. Each $A_{i,j}$ is contained in a hyperplane. Therefore, it suffices to prove that for each compact set $K$, only finitely many of the $A_{i,j}$'s hit $K$. Let $S\supset K$ be a compact set such that $\psi(S)>1$ (in the case $\psi(\mathbb R^d)=1$ the claim is trivial). Each open ball $\ballint{x}{s(x)}$ for $x\in K$ doesn't contain $S$ and therefore it is contained in a compact set $T$ which depends only on $K$ and $S$. Now, if $A_{i,j}$ hits $K$, then $\xi_i$ and $\xi_j$ lie in $T$. It follows that such pairs $(i,j)$ are finite and we are done.
	}
	\end{proof}

\begin{proof}[Proof of Proposition~\ref{prop:vterritories}]
%

Suppose the Voronoi territory $C$ of center $\xi_0$ is unbounded.
According to Lemma~\ref{lemma:star}, $C$ is star-shaped and hence there is a half-line $l$ starting at $\xi_0$ which completely lies in $C$. For $x\in l$, \verified{Remark~\ref{rem:VoronoiIff} gives} $\psi(\ballint x{\dist x{\xi_0}})\leq 1$. By taking union over $x\in l$, we find $\Psi_{\omega}(H)\leq 1$, where $H$ is an open half-space orthogonal to $l$, a contradiction. 

\verified{Now, suppose $\Psi$ is a stationary non-zero random measure. One can obtain from Lemma~\ref{lemma:infinite} that 
	every half-space has infinite $\Psi$-measure a.s. and the claim follows.}
%
\end{proof}

\begin{lemma}
\label{lemma:JohnsonMehl}
Let $A\subseteq\mathbb R^d$, $a \in \mathbb R^d$ and $r>0$. Let $C$ be the set of points $x$ such that $\dist xa < \dist x{a'}+r$ for all $a'\in A$. $C$ is bounded if and only if $\ball ar$ is contained in the interior of $\conv{A}$, where $\conv{A}$ stands for the convex hull of $A$.
\end{lemma}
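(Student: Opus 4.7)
My plan is to characterize unboundedness of $C$ by the existence of a unit vector $u$ along which $A$ is asymptotically contained in a closed half-space tangent to $\ball a r$ at $a+ru$, and then to translate this via the supporting-hyperplane theorem into the stated geometric containment.

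For the direction $\ball a r\subseteq\interior{\conv A}\Rightarrow C$ bounded, I will argue by contradiction: if $C$ were unbounded, I extract $x_n\in C$ with $\abs{x_n}\to\infty$ and (passing to a subsequence) $x_n/\abs{x_n}\to u$ for some unit $u$. A direct expansion gives $\dist{x_n}{a}-\dist{x_n}{a'}\to\inprod{u}{a'-a}$ for each fixed $a'\in A$, and the defining inequality of $C$ passes to the limit to force $\inprod{u}{a'-a}\le r$ for every $a'\in A$. But $a+ru\in\ball a r\subseteq\interior{\conv A}$ gives $a+(r+\varepsilon)u\in\conv A$ for some $\varepsilon>0$; writing this point as a finite convex combination of elements of $A$ and taking the inner product with $u$ would exhibit some $a'\in A$ with $\inprod{u}{a'-a}\ge r+\varepsilon$, a contradiction.

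For the reverse direction $\ball a r\not\subseteq\interior{\conv A}\Rightarrow C$ unbounded, I pick $p\in\ball a r\setminus\interior{\conv A}$ and apply a supporting/separating hyperplane at $p$ to obtain a unit vector $u$ with $\inprod{u}{a'-p}\le 0$ for all $a'\in A$, hence $\inprod{u}{a'-a}\le\inprod{u}{p-a}\le\abs{p-a}\le r$. Setting $x_t=a+tu$, expansion of $\dist{x_t}{a'}^2=t^2-2t\inprod{u}{a'-a}+\abs{a'-a}^2$ reduces $x_t\in C$ (for $t>r$) to $2t(\inprod{u}{a'-a}-r)<\abs{a'-a}^2-r^2$ for every $a'\in A$. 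The cases $\abs{a'-a}>r$ are immediate; for the remaining $a'$, the orthogonal decomposition $a'-a=\inprod{u}{a'-a}\,u+(a'-a)^\perp$ and $\abs{(a'-a)^\perp}^2=\abs{a'-a}^2-\inprod{u}{a'-a}^2$ rewrite the required threshold on $t$ as
\begin{equation*}
t\;>\;\frac{r+\inprod{u}{a'-a}}{2}-\frac{\abs{(a'-a)^\perp}^2}{2\bigl(r-\inprod{u}{a'-a}\bigr)}\;\le\;r,
\end{equation*}
so any $t>r$ works uniformly in $a'$, and $C$ is unbounded.

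The main obstacle is precisely this uniformity of the threshold in $a'$ in the second direction: without the algebraic identity above, a sequence in $A$ with $\inprod{u}{a'-a}$ approaching $r$ (possible when $A$ is infinite) could in principle force the individual thresholds on $t$ to blow up, so that no single $t$ would satisfy the required inequality for all $a'$ simultaneously. The identity keeps the threshold bounded by $r$ regardless of $a'$, which is exactly what is needed.
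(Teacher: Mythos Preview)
Your forward direction ($\ball a r\subseteq\interior{\conv A}\Rightarrow C$ bounded) is correct and close in spirit to the paper's approach: both reduce to the incompatibility of $\inprod{u}{a'-a}\le r$ for all $a'\in A$ with the containment hypothesis. The paper phrases this via star-shapedness of $C$ and half-lines, while you pass to a limiting direction of an unbounded sequence, but the core is the same.

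The backward direction has a genuine gap. Your threshold formula divides by $r-\inprod{u}{a'-a}$, so it only applies when $\inprod{u}{a'-a}<r$ strictly. From the separating hyperplane you obtain merely $\inprod{u}{a'-a}\le\inprod{u}{p-a}\le\abs{p-a}\le r$, and equality throughout occurs precisely when $p=a+ru\in A$; for that $a'=p$ the reduced inequality reads $0<0$, so $x_t\notin C$ for any $t\ge r$ and the ray argument collapses. This gap is not repairable in general: with $d=1$, $a=0$, $r=1$, $A=\{-2,1\}$ one computes $C=(-\tfrac32,1)$, which is bounded, while $\ball 0 1=[-1,1]\not\subseteq(-2,1)=\interior{\conv A}$, so the ``only if'' direction is actually false as stated with the closed ball. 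The paper's own proof glosses over the same point (in the asserted equivalence between ``$C$ contains the half-line $\{a+tv:t\ge 0\}$'' and ``$A\subseteq\{x:v\cdot(x-a)\le r\}$''), but only the forward implication---which you do prove correctly---is used downstream.
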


 \begin{proof}
One has $C$ is star-shaped with center $a$. Hence, it is unbounded if and only if it contains a half-line $\{a+tv:t\in[0,\infty)\}$ for some unit vector $v\in\mathbb R^d$. {This is equivalent to $A\subseteq \{x:v\cdot(x-a)\leq r\}$ for some unit vector $v$. Equivalently, $\ball a r \not\subseteq\interior{\conv A}$.
}
\end{proof}
\begin{proof}[Proof of Theorem~\ref{thm:bounded}]
Using the ergodic decomposition theorem,
we can assume that $\mathbb P$ is an ergodic measure {for the family $\left(\theta_s\right)_{s\in\mathbb R^d}$}.

 \eqref{thm:bounded:1} We call a center \myemph{bad} if its territory is unbounded. Let $\Xi'$ be the set of bad centers.
Suppose $\Psi(\Xi')\neq 0$ with positive probability. By ergodicity, this happens almost surely. Since $F$ is flow-adapted, the restriction $\Psi'$ of $\Psi$ to $\Xi'$ is an ergodic random measure with positive intensity.
Let $\xi_0\in\Xi'$ and let $C$ be the \vterritory{} of $\xi_0$ with respect to \verified{$\frac 12 \Psi'$}. We claim that the territory of $\xi_0$ is a subset of $C$ and contradiction follows directly by applying Proposition~\ref{prop:vterritories} for $\Psi'$. If the claim is not true, let $x_0\not\in C$ such that $F(x_0,\xi_0)>0$. \verified{By Remark~\ref{rem:VoronoiIff}  
we get $\Psi'(\ballint{x_0}{\dist{x_0}{\xi_0}})\geq 2>1$.} Therefore, there is a bad center $\xi_1\in \ballint{x_0}{\dist{x_0}{\xi_0}}$ such that $F(x_0,\xi_1)<1$. Now $x_0$ desires $\xi_1$ since it prefers $\xi_1$ over $\xi_0$. Also $\xi_1$ desires $x_0$ since its territory is unbounded. So $(x_0,\xi_1)$ is an unstable pair, a contradiction.

\eqref{thm:bounded:2} By stationarity, it is enough to show that for a deterministic point $z$, the union of the territories of the sites in $\ball z1$ has finite $\Psi$-measure almost surely. Call a lattice point $z\in \mathbb Z^d$ \myemph{bad} if this property doesn't hold for $z$. Suppose by contradiction that 0 is bad with positive probability. Then the set of bad lattice points form an ergodic \verified{simple} point process on $\mathbb Z^d$ with positive intensity. 
Let $Z$ be the set of bad lattice points, which contains $z_0:=0$.
Since every half-space contains some element of $Z$, we have $Conv(Z)=\mathbb R^d$. Therefore, we can find $z_1,\ldots,z_n\in Z\backslash\{0\}$ such that $\ball {z_0}2\subseteq\conv{z_1,\ldots,z_n}$. Let $C_0$ be the region defined in Lemma~\ref{lemma:JohnsonMehl} for $A=\{z_1,\ldots,z_n\}$, $a=z_0$ and $r=2$, which is bounded by the claim of the lemma. For $i=1,\ldots,n$ let $C_i$ be the set of points $x\not\in C_0$ that are closer to $z_i$ than other points of $A$.
Let $T$ be the union of the territories of the sites in $\ball{z_0}1$. Since $z_0$ is a bad lattice point, we have $\Psi(T)=\infty$. Since $\Psi(C_0)<\infty$, there is $i>0$ and a bounded Borel set $D\subseteq T\cap C_i$ such that $\Psi(D)>1$. Since $z_i$ is a bad lattice point, there is a site $x_i\in \ball {z_i}1$ such that its territory contains some centers further away than all of the centers in $D$. Since $\Psi(D)>1$, we can find a center $\xi_i\in D$ such that $F(x_i,\xi_i)<1$. We claim that $(x_i,\xi_i)$ is an unstable pair.

 Since $\xi_i\in T$, there is a site $x_0\in\ball {z_0}1$ such that $F(x_0,\xi_i)>0$. The fact that $\xi_i\not\in C_0$ gives $\dist{\xi_i}{z_0} \geq \dist{\xi_i}{z_i}+2$. Therefore $\dist{\xi_i}{x_0}>\dist{\xi_i}{x_i}$ and thus $\xi_i$ desires $x_i$. On the other hand, $x_i$ desires $\xi_i$ since its territory contains centers further away than $\xi_i$ by the definition of $x_i$. So $(x_i,\xi_i)$ is an unstable pair, a contradiction.
\end{proof}

 \section{Examples}
\label{sec:examples}
\begin{figure}[t]
	\centering
\subfigure[$A_1$]{
		\label{fig:interval1}
\includegraphics[width=.3\textwidth]{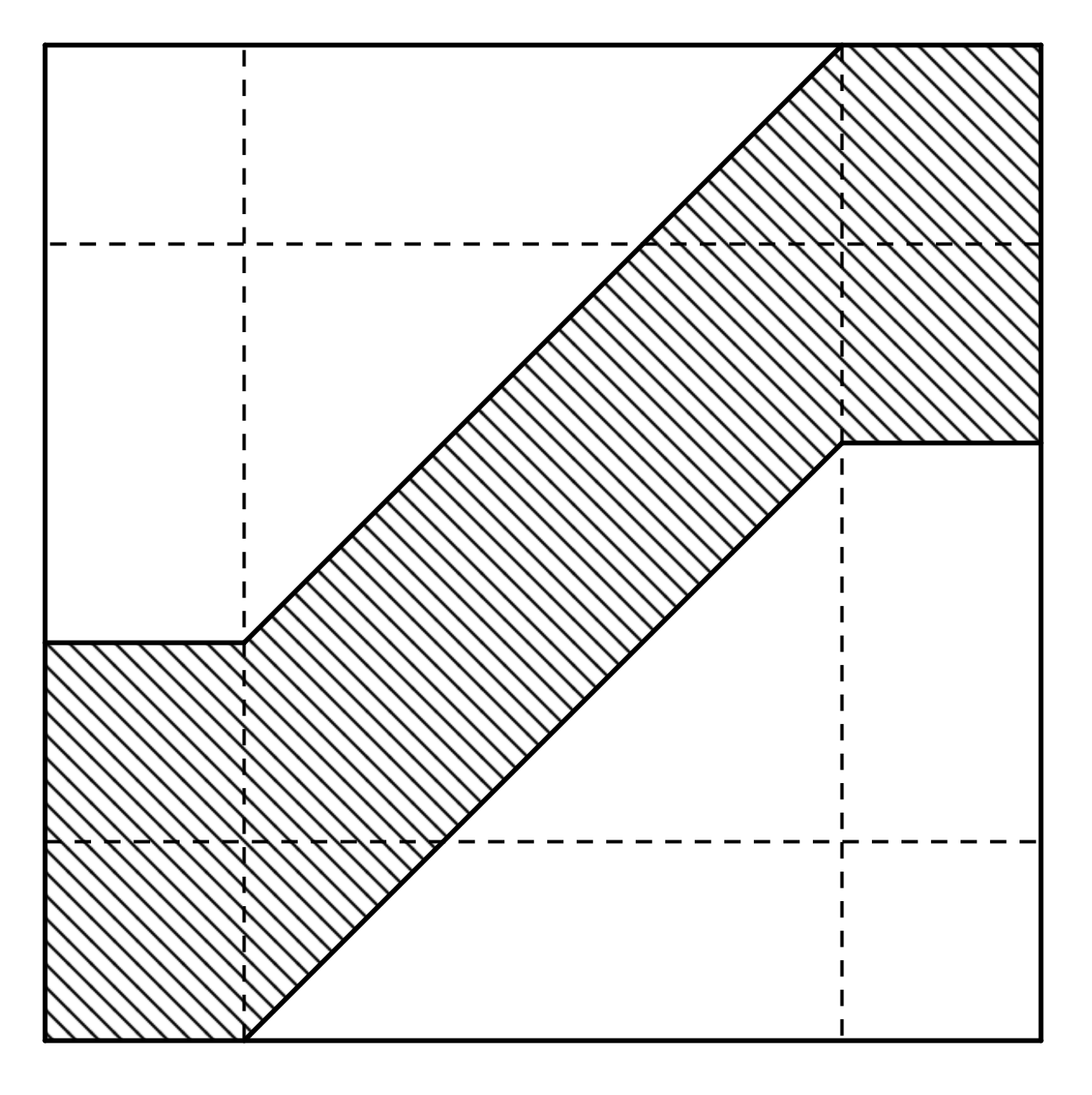}
}
\subfigure[$A_1-R_1$]{
		\label{fig:interval2}
\includegraphics[width=.3\textwidth]{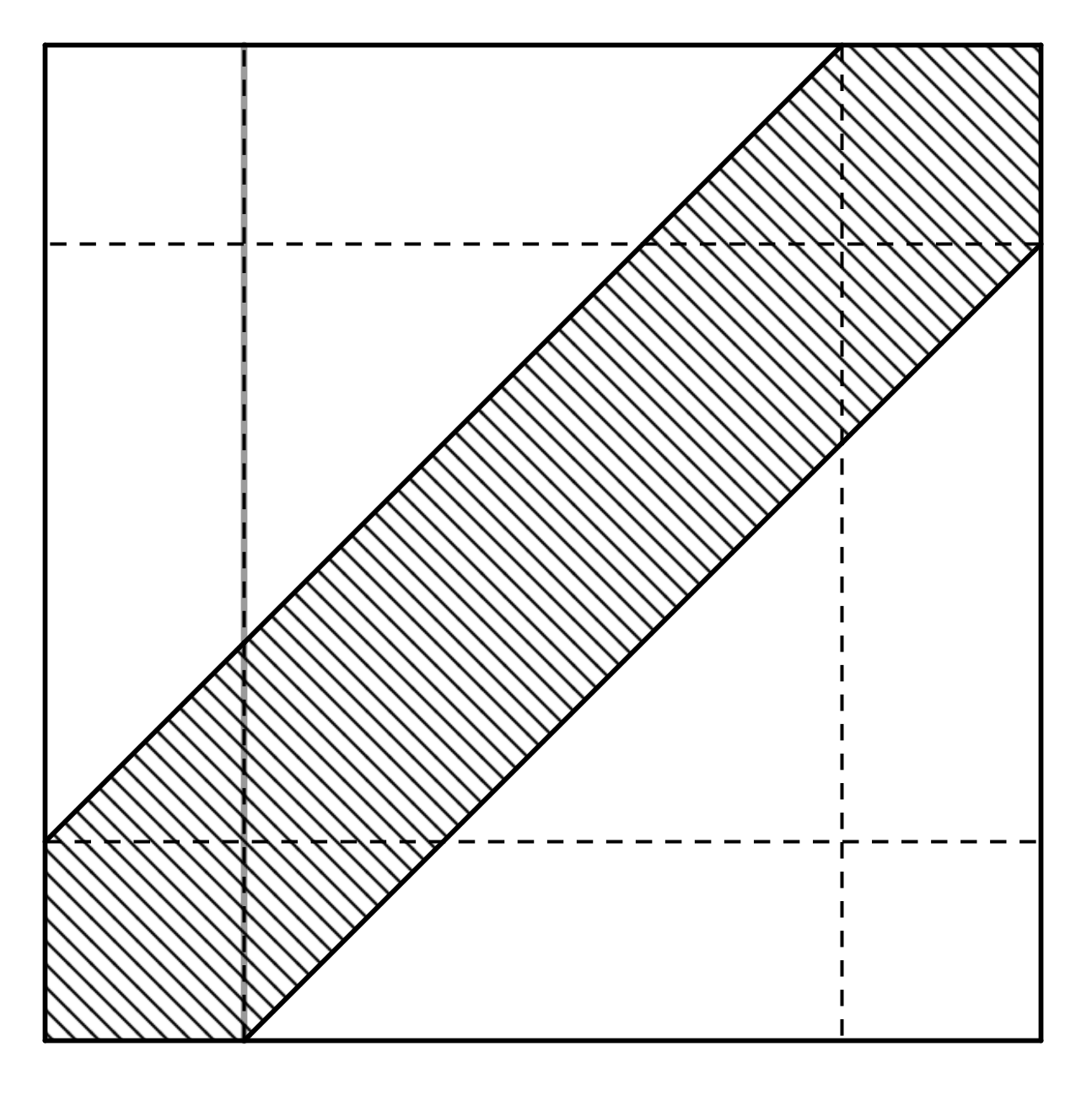}
}
\subfigure[$f$]{
		\label{fig:interval3}
\includegraphics[width=.3\textwidth]{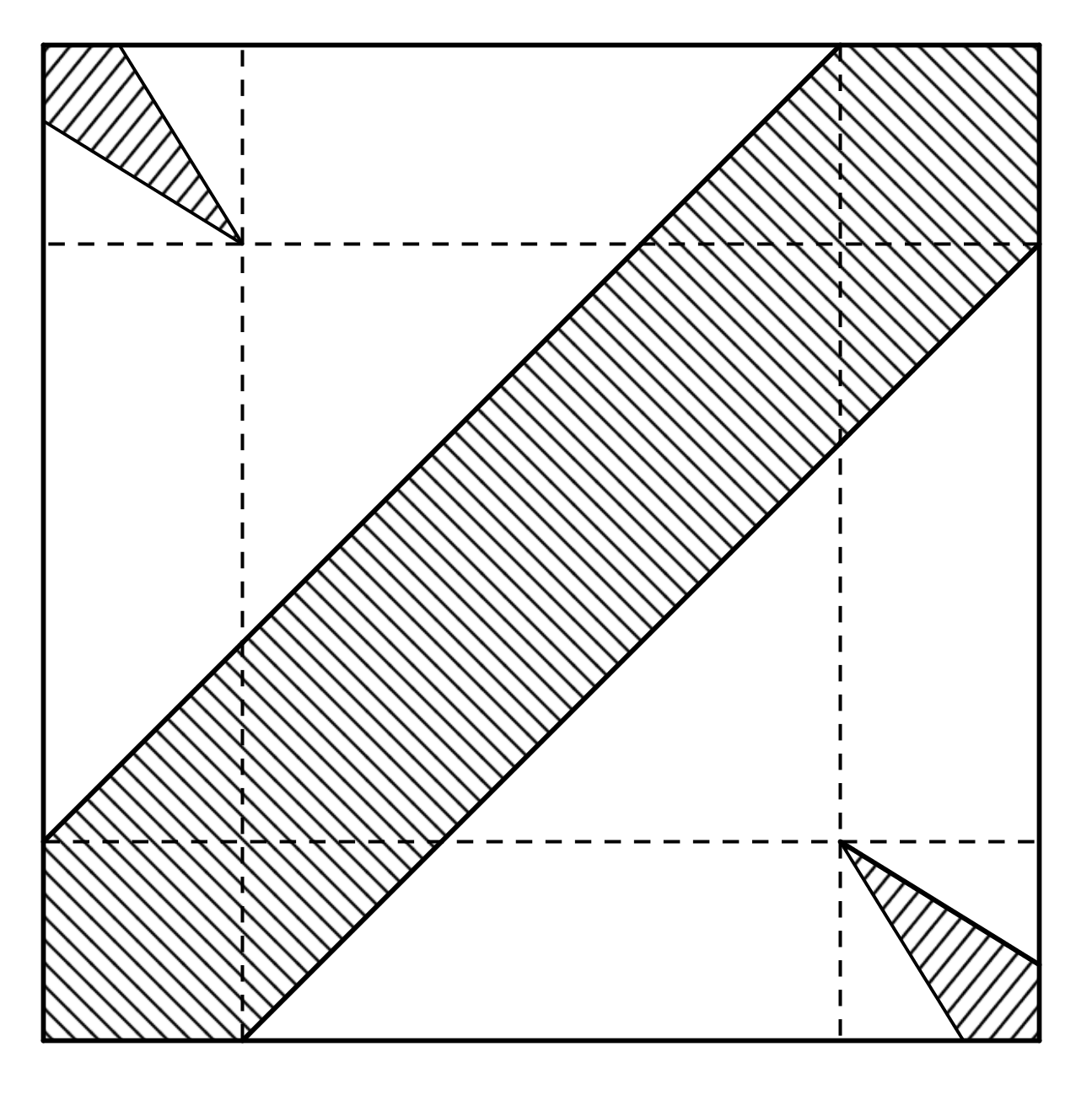}
}
\caption{The supports of three functions in the setting of Example~\ref{ex:interval}, which map $[0,\alpha]\times[0,\alpha]$ to $\{0,1\}$. The first axis stands for sites and the second axis stands for centers. The dashed lines are the lines with distance $\frac 1{2}$ to the sides.
The small segments in the third figure have slope $-\phi$ or $-\frac 1{\phi}$, where $\phi=\frac{1+\sqrt 5}2$.}
\label{fig:interval}
\end{figure}


In the following examples we let $\mathcal Z_d$ be the counting measure on $\mathbb Z^d$; i.e. \[\mathcal Z_d = \sum_{z\in \mathbb Z^d}{\delta_z}.\]
Although $\mathcal Z_d$ is not a stationary measure on $\mathbb R^d$, in examples~\ref{ex:Z*R} and~\ref{ex:Z+R} we can translate the measures by a random uniform element of $[0,1]^d$ to obtain ergodic {stationary} random measures. Therefore, {the claims of Theorem~\ref{thm:coupling}, Theorem~\ref{thm:site-optimalBalancing} and Theorem~\ref{thm:uniqueness} are valid} in these examples.

\begin{example}
\label{ex:generalization}
When $\varphi=\mathcal L_d$ and $\psi$ is a counting measure, \verified{the transport kernel given by the site-optimal density via Remark~\ref{rem:kernel}} coincides a.e. with the allocation presented in~\cite{HoPe06}.
In this setting, definitions~\ref{def:compatibleDensity} and~\ref{def:stable} also generalize the definitions of allocations and stability in~\cite{HoPe06}. 
\end{example}

 \begin{example}
\label{ex:interval}
Let $\varphi$ and $\psi$ be both the Lebesgue measure restricted on $[0,\alpha]$, where $\alpha\geq \frac 32$. Figure~\ref{fig:interval1} and~\ref{fig:interval2} illustrate the first stage of Algorithm~\ref{alg:Gale}. It can be seen that the site-optimal and the center-optimal densities agree a.e. with the function depicted in Figure~\ref{fig:interval3}. Therefore, we can use Proposition~\ref{prop:uniqueness} to see that this is the unique stable \verified{constrained} density for $\varphi$ and $\psi$. Note that the sites (resp. centers) that have distance less than $\frac 1{2}(1-\frac 1{\phi})$ to the boundary are unexhausted (resp. unsated), where $\phi$ is the golden number; i.e. it satisfies $\phi-\frac 1{\phi}=1$.
\end{example}

 \begin{example}
\label{ex:ZR}
Let $\varphi=\mathcal L_1$ and \verified{$\psi$ be the counting measure on a subset $A\subseteq \mathbb Z$. For $x\in\mathbb R$ and $\xi\in A$} one has
\begin{equation*} 
f_s(x,\xi)=\left\{
\begin{array}{ll}
1, & \verified{\dist x {\xi}}\leq\frac 12 \\
0, & \text{ otherwise }
\end{array}
\right.
\end{equation*}
\verified{It is easy to verify that $f_s$ is a balancing stable \verified{constrained} density only when $A=\mathbb Z$.} 
\end{example}

 \begin{example}
\label{ex:Z*R}
Let $\varphi=\mathcal L_2$ and $\psi=\mathcal Z_1 \otimes \mathcal L_1$.
Define
\[
f(x,\xi)=\left\{
\begin{array}{ll}
1, & \max \{\abs{x_1-\xi_1}, \abs{x_2-\xi_2}\}\leq\frac 12,\\
0, & \text{otherwise.}
\end{array}
\right.
\]
This function is a balancing \verified{constrained} density, but is not stable since the center $\xi_0=(0,0)$ and the site $x_0=(a,0)$ desire each other for $\frac 12 < a < \frac 58$. 
However, Algorithm~\ref{alg:Gale} gives $f_s$ in one step, which is a balancing stable \verified{constrained} density. For $x=(x_1,x_2)$ and $\xi=(\xi_1,\xi_2)$, the site-optimal density is
\begin{equation*}
f_s(x,\xi)=\left\{
\begin{array}{ll}
1, & \abs{x_2-\xi_2} \leq \min\{\frac 12, \frac 54 - 2\abs{x_1-\xi_1}\},\\
0, & \text{ otherwise.}
\end{array}\right.
\end{equation*}

 \begin{figure}[t]
\centering
\includegraphics[width=.9\textwidth]{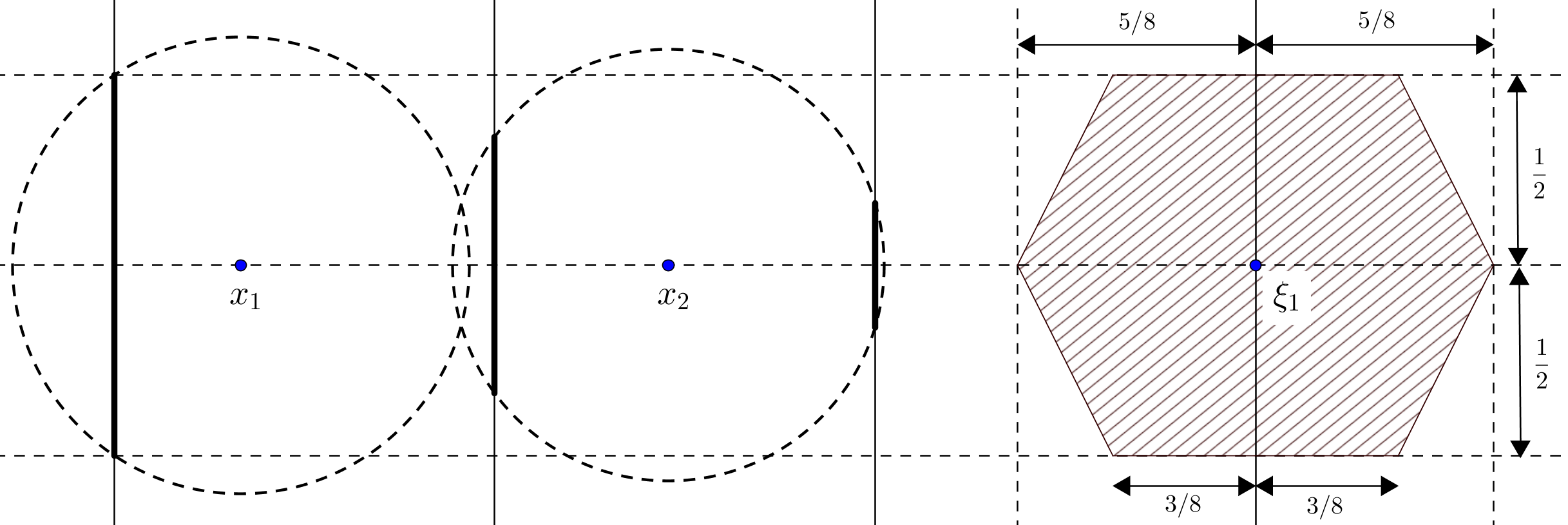}
	\caption{The territories of two sites $x_1$ and $x_2$ and a center $\xi_1$ in Example~\ref{ex:Z*R}. The vertical solid lines represent $\mathbb Z \times \mathbb R$.
	The dashed circles show the application {radii} of the two sites and the vertical bold lines show their territories.}
	\label{fig:ex:Z*R}
\end{figure}
The territory of each center is a hexagon 
as illustrated in Figure~\ref{fig:ex:Z*R}.
%
\end{example}

 \begin{example}
\label{ex:Z+R}
Let $\varphi=2\mathcal L_1$ and $\psi=\mathcal L_1 + \mathcal Z_1$. For $0\leq x\leq\frac 12$, the site-optimal density is
\begin{equation*}
f_s(x,\xi)=\left\{
\begin{array}{ll}
1, & 0<\xi\leq 2x,\\
1-2x, & \xi=0,\\
0, & \text{ otherwise}
\end{array}\right.
\end{equation*}
and for $-\frac 12\leq x\leq 0$ we have
\begin{equation*}
f_s(x,\xi)=\left\{
\begin{array}{ll}
1, & 2x\leq\xi<0,\\
1-2\norm x, & \xi=0,\\
0, & \text{ otherwise.}
\end{array}\right.
\end{equation*}
A similar equation holds for other values of $x$. By applying a random translation, Theorem~\ref{thm:site-optimalBalancing} holds and $f_s$ is a balancing stable \verified{constrained} density.
\end{example}

\begin{example}
\label{ex:sqrt2}
\verified{Let $\Phi$ and $\Psi$ be jointly stationary ergodic counting measures (i.e. simple point processes) in $\mathbb R$, with positive and finite intensities $\lambda_{\Phi}$ and $\lambda_{\Psi}$. The random measures $\Phi':=\frac 1{\lambda_{\Phi}}\Phi$ and $\Psi':=\frac 1{\lambda_{\Psi}}\Psi$ have unit intensity, but there is no $(\Phi',\Psi')$-balancing allocation provided that $\frac{\lambda_{\phi}}{\lambda_{\psi}}\not\in\mathbb Z$ (note that $\frac{\lambda_{\phi}}{\lambda_{\psi}}$ should be the number of pre-images of a center). However, Theorem~\ref{thm:site-optimalBalancing} shows that there is a flow-adapted balancing transport kernel between them.
}
\end{example}

\verified{
\begin{example}
\label{ex:noAllocation}
In the setting of Example~\ref{ex:sqrt2}, let $\Phi'':=\Phi'\times\mathcal L_{d-1}$ and $\Psi'':=\Psi'\times\mathcal L_{d-1}$, which are stationary ergodic random measures on $\mathbb R^d$ with unit intensity. Theorem~\ref{thm:site-optimalBalancing} shows the existence of a flow-adapted $(\Phi'',\Psi'')$-balancing transport kernel, but we claim that there is no such allocation. 

Let $\tau$ be a flow-adapted $(\Phi'',\Psi'')$-balancing allocation. By translation invariance, we get that for a site $x$, the vector $\tau_{\omega}(x)-x$ depends only on $\pi_1(x)$ and $\omega$, where $\pi_1$ is projection on the first coordinate. It follows easily that the allocation on $\mathbb R$ defined by $x\mapsto \pi_1(\tau((x,0)))$ is $(\Phi',\Psi')$-balancing, which is a contradiction by Example~\ref{ex:sqrt2}.
\end{example}
}

\begin{example}
\label{ex:-+}
Let $\varphi$ be the Lebesgue measure on $(0,\infty)$ and $\psi$ be the Lebesgue measure on $(-\infty,0)$. It is easy to see that $f_s(x,\xi)\in\{0,1\}$ and $f_s(x,\xi)=1$ if and only if $\lceil x \rceil = \lceil{-\xi}\rceil$, where $\lceil{a}\rceil$ is the smallest integer not less than $a$.
\end{example}

 \ACKNO{\verified{We are grateful to Herman Thorisson for his helpful comments on the shift-coupling problem.} \verified{We also thank the anonymous reviewers for their valuable comments on this work.}}

\end{document}